\def\@abssec#1{\vspace{.05in}\footnotesize \parindent .2in 
{\bf #1. }\ignorespaces} 
\newtheorem{theorem}{Theorem}[section]
\newtheorem{lemma}[theorem]{Lemma}
\newtheorem{proposition}[theorem]{Proposition}
\newtheorem{corollary}[theorem]{Corollary}
\def \Rm {\mathbb R}
\def \Nm {\mathbb N}
\def\II{\mathbb{I}}
\newcommand{\eps}{\varepsilon}
\newcommand{\ds}{\displaystyle}
\newcommand{\calH}{\mathcal H}
\newcommand{\calL}{\mathcal L}
\newcommand{\calF}{\mathcal F}
\newcommand{\calE}{\mathcal E}
\newcommand{\Tr}{\textnormal{Tr}}
\newcommand{\calJ}{\mathcal J}
\newcommand{\calT}{\mathcal T}
\def\fref#1{{\rm (\ref{#1})}}
\newcommand{\cout}[1]{}
\def\un{{\mathbbmss{1}}}
\newcommand{\be}{\begin{equation}}
\newcommand{\ee}{\end{equation}}
\newcommand{\bea}{\begin{eqnarray}}
\newcommand{\eea}{\end{eqnarray}}
\newcommand{\bee}{\begin{eqnarray*}}
\newcommand{\eee}{\end{eqnarray*}}
\newcommand{\bal}{\begin{align*}}
\newcommand{\eal}{\end{align*}}
\def\Hp{{H}^1_{per}}
\def\Hmp{{H}^{-1}_{per}}
\begin{document}
{\title{The quantum drift-diffusion model: existence and exponential convergence to the equilibrium}}

 \author{Olivier Pinaud \footnote{pinaud@math.colostate.edu}}
 \affil{Department of Mathematics, Colorado State University, Fort Collins CO, 80523}

\maketitle

\begin{abstract}
  This work is devoted to the analysis of the quantum drift-diffusion model derived by Degond et al in \cite{QET}. The model is obtained as the diffusive limit of the quantum Liouville-BGK equation, where the collision term is defined after a local quantum statistical equilibrium. The corner stone of the model is the closure relation between the density and the current, which is nonlinear and nonlocal, and is the main source of the mathematical difficulties. The question of the existence of solutions has been open since the derivation of the model, and we provide here a first result in a one-dimensional periodic setting. The proof is based on an approximation argument, and exploits some properties of the minimizers of an appropriate quantum free energy. We investigate as well the long time behavior, and show that the solutions converge exponentially fast to the equilibrium. This is done by deriving a non-commutative logarithmic Sobolev inequality for the local quantum statistical equilibrium.
\end{abstract}
%\tableofcontents

\section{Introduction}

The quantum drift-diffusion model was derived in \cite{QET} by Degond et al, with the goal of describing the diffusive behavior of quantum particles. The widely used classical drift-diffusion model \cite{MarkoRing} is indeed not accurate as the size of electronic devices decreases, and models accounting for quantum effects are necessary. The quantum drift-diffusion model is obtained as the (informal) diffusive limit of the quantum Liouville-BGK equation 
\be \label{liouville}
i \hbar \partial_t \varrho=[H, \varrho]+i \hbar Q(\varrho),
\ee
where $\varrho$ is the density operator, i.e. a self-adjoint nonnegative trace class operator that models a statistical ensemble of particles (here electrons), $H$ is a given Hamiltonian, $[\cdot,\cdot]$ denotes the commutator between two operators, and $Q$ is a collision operator. The original feature of \fref{liouville} lies in the definition of $Q$, which is of BGK type \cite{BGK}, and takes the form, in its simplest version, 
\be \label{col}
Q(\varrho)=\frac{1}{\tau} \left(\varrho_e(\varrho) -\varrho\right),
\ee 
where $\tau$ is a relaxation time and $\varrho_e(\varrho)$ is a so-called \textit{quantum statistical equilibrium}. The main motivation behind equations \fref{liouville}-\fref{col} is to describe the collective dynamics of many-particles quantum systems, and in particular to derive reduced quantum fluid models. To this end, Degond and Ringhofer translates in \cite{DR} Levermore's entropy closure strategy \cite{levermore} to the quantum case. As in the kinetic situation, this requires the introduction of some statistical equilibria, which, in the quantum case, are  minimizers of the free energy
$$
F(\varrho)= T \Tr (\beta(\varrho)) + \Tr (H \varrho),
$$ 
where $T$ is the temperature (we will set $\hbar=T=1$ for simplicity, as well as all physical constants), $\beta$ is an entropy function, and $\Tr$ denotes operator trace. The free energy $F$ is minimized under a given set of constraints on the moments of $\varrho$, which include for instance the density, the momentum, or the energy, and these constraints present the particularity of being \textit{local}. In other terms, when prescribing the first moment only for simplicity of the exposition, $F$ is minimized under the constraint that the local density $n[\varrho](x)$ of $\varrho$ is equal to a given function $n(x)$. If $\varrho$ is associated to an integral kernel $\rho(x,y)$, then $n[\varrho](x)$ is simply formally $\rho(x,x)$. The analysis of the minimization problem alone is not trivial, mostly because of the local character of the constraints, and was addressed in \cite{MP-JSP,MP-KRM} in the cases where the first two moments of $\varrho$ are prescribed. The case of higher order moments is still open.

In its simplest form, the collision operator $Q$ is then defined after the equilibrium $\varrho_e(\varrho)$, where $\varrho_e(\varrho)$ is a minimizer of the free energy under the constraint $n[\varrho_e](x)=n[\varrho](x)$. When $\beta$ is the Boltzmann entropy, then $\varrho_e$ is referred to as the \textit{quantum Maxwellian}. Note that a rigorous construction of the latter as a minimizer of the constrained $F$ is not direct, see the discussions of this fact in \cite{MP-JDE}. With the so-defined $\varrho_e(\varrho)$ at hand, one can then consider the evolution problem \fref{liouville}. The main difficulty in the analysis is the fact that the map $\varrho \mapsto \varrho_e(\varrho)$ is nonlinear, and foremost that it is defined via an implicit intricate nonlocal relation (see further equation \fref{clos}). The existence of solutions to \fref{liouville} was proved in \cite{MP-JDE} in a one-dimensional setting, the uniqueness and higher dimensional settings remain open problems.  

The Quantum Drift-Diffusion model (QDD in the sequel) is then obtained as the diffusive limit of \fref{liouville} when $\beta$ is the Boltzmann entropy. For $\eps=\tau/\overline{t}$, where $\overline{t} \gg \tau$ is some characteristic time, it is shown formally in \cite{QET}, that a solution $\varrho_\eps(t)$ to an appropriately rescaled version of \fref{liouville} converges as $\eps \to 0$ to a quantum Maxwellian of the form $\exp(-(H+A(t,x)))$ (defined in the functional calculus sense), where $A(t,x)$ is the so-called \textit{quantum chemical potential} and satisfies the system, that will be complemented with boundary conditions further, 

% Heuristally, solutions to \fref{liouville} are expected to converge in the long-time limit to some equilibrium $\varrho_e(\varrho_\infty)$. 

% \be \label{qdd}
% \left\{
% \begin{array}{l}
% \ds \frac{\partial n}{\partial t}+ \nabla \cdot \big(n  \nabla (A-V) \big)=0, \\[3mm]
% \ds -\Delta V=n, \\[3mm]
% \ds  n=n[e^{-H[A]}]=\sum_{p \in \Nm} e^{-\lambda_p[A]} |\phi_p[A]|^2
% \end{array}\right.
% \ee

\begin{numcases}{} 
\ds \frac{\partial n}{\partial t}+ \nabla \cdot \big(n  \nabla (A-V) \big)=0, \label{qdd}\\[3mm]
\ds -\Delta V=n, \label{poisson}\\[3mm]
\ds  n=n[e^{-(H+A)}]=\sum_{p \in \Nm} e^{-\lambda_p} |\phi_p|^2 \label{clos}.
\end{numcases}
Above, $V$ is the Poisson potential that accounts for the electrostatic interactions between the electrons. The corner stone of the above system is the nonlinear nonlocal closure relation \fref{clos}, that expresses the relationship between the density $n$ and the potential $A$: $n$ is the local density of the operator $\exp(-(H+A))$. Assuming the Hamiltonian $H+A$ has a compact resolvant, the second inequality in \fref{clos} holds for $(\lambda_p,\phi_p)_{p \in \Nm}$ the spectral decomposition of $H+A$. Since $A$ is the main quantity here, the system \fref{qdd}-\fref{poisson}-\fref{clos} is probably best seen as an evolutionary problem on $A$ rather than on $n$.

One of our objectives in this work is to construct solutions to \fref{qdd}-\fref{poisson}-\fref{clos}. The question has been open since the derivation of the model in \cite{QET}. Some progress was made in \cite{QDD-SIAM}, where solutions to a semi-discretized (w.r.t. the time variable) system were constructed as minimizers of an appropriate functional. The continuum limit was not performed in \cite{QDD-SIAM}, mostly for two reasons: (i) uniform estimates in the discretization parameter were missing; they require some lower bounds on the density $n$ that were not available at that time, and (ii) the closure relation \fref{clos} was not yet well understood mathematically. We provide here the missing ingredients needed to pass to the limit, and therefore obtain the first result of existence of solutions for \fref{qdd}-\fref{poisson}-\fref{clos}: we derive a lower bound on the density assuming the initial state is sufficiently close to the equilibrium, and based on our previous analysis of the minimization problem in \cite{MP-JSP,MP-KRM,MP-JDE}, we have now the technical tools to obtain \fref{clos} as the limit of the discretized version. We will work in a one-dimensional setting with periodic boundary conditions. The latter can directly be replaced by Neumann boundary conditions, while Dirichlet boundary conditions would create additional technical difficulties since the density would vanish at the boundary. The limitation to one-dimensional domains is addressed further in the paper, it pertains to the derivation of the aforementioned lower bound that involves a Sobolev embedding. Note that the system \fref{qdd}-\fref{poisson}-\fref{clos} can be written as a gradient flow in the Wasserstein space, but because of the complexity of the relation between $n$ and $A$, we were not able to use the standard theory.

Our other objective is to investigate the long time limit of \fref{qdd}-\fref{poisson}-\fref{clos}, and in particular to obtain an exponential convergence to the equilibrium. This will be achieved by deriving some non-commutative logarithmic Sobolev inequality satisfied by the operator $\exp(-(H+A(t,x)))$, in the spirit of those of \cite{carbone}.

As a conclusion of this introduction, we would like to point out that a different model is also referred to as the quantum drift-diffusion model in the literature. This model, sometimes also called  the ``density gradient model'', is a classical drift-diffusion model corrected by a quantum term. As was shown in \cite{QET}, it is actually obtained in the semi-classical limit of the quantum drift-diffusion model considered here, by accounting for the first-order correction. In the density-gradient model, the closure relation is local and much simpler than \fref{clos}, and $A$ is related to the so-called Bohm potential $\Delta \sqrt{n} / \sqrt{n}$, leading to a fourth-order parabolic equation of the form
\be \label{p4}
\frac{\partial n}{\partial t}+ \nabla \cdot \left( n \nabla \left( \frac{\Delta \sqrt{n}}{\sqrt{n}} -\log(n) \right) \right)=0.
\ee 
One disadvantage of this model is the introduction of high order derivatives, that do not appear in \fref{qdd}-\fref{poisson}-\fref{clos}. A closely related model, obtained in the zero temperature limit (the term $\log(n)$ then vanishes in \fref{p4}), is the Derrida-Lebowitz-Speer-Spohn equation \cite{DLSS1,DLSS2}, that was extensively studied mathematically in the recent years. The existence and uniqueness of solutions was first limited to one-dimensional domains for the same technical reason as here, see \cite{bleher, gualdani,JungPin}. The existence of solutions was then extended to multi-dimensional domains in \cite{gianazza} using optimal transport techniques, and in \cite{jungmatthes} with more direct methods.

Note that the QDD system \fref{qdd}-\fref{poisson}-\fref{clos} inherits some of the technical difficulties of \fref{p4} (or vice-versa), in particular the strict positivity of the density, and presents new challenges as the closure relation is not local. In particular, the monoticity property of the high-order non-linear term in \fref{p4} obtained in \cite{JungPin}, which is the main ingredient for proving uniqueness, does not seem to generalize to our case and we are limited to an existence result.

The paper is structured as follows: in Section \ref{prelimi}, we introduce some notation and important results about the minimization of the free energy $F(\varrho)$; our main theorem is stated in Section \ref{main}, and its proof is given in Section \ref{proofth}. A technical lemma is finally proved in the Appendix.

{\bf Acknowledgment.} This work was supported by NSF CAREER grant DMS-1452349. 

\section{Preliminaries} \label{prelimi}

We start by introducing some notation. 

\paragraph{Notation.} Our domain $\Omega$ is the 1-torus $[0,1]$. We will denote by $L^r(\Omega)$ , $r\in [1,\infty]$, the usual Lebesgue spaces of complex-valued functions, and by $W^{k,r}(\Omega)$, the standard Sobolev spaces. We introduce as well $H^k=W^{k,2}$, and $(\cdot,\cdot)$ for the Hermitian product on $L^2(\Omega)$ with the convention $(f,g)=\int_\Omega \overline{f} g dx$. We will use the notations $\nabla=d/dx$ and $\Delta=d^2/dx^2$ for brevity. For a given exterior potential $V_0 \in L^\infty(\Omega)$, we consider then the Hamiltonian
\be H=-\Delta + V_0 \; \textrm{with domain} \; 
D(H)=\left\{u\in H^2(\Omega):\,u(0)=u(1),\,\nabla u(0)=\nabla u(1)\right\}.\label{domainH} \ee
The free Hamiltonian $-\Delta$ is denoted by $H_0$, and $\Hp$ is the space of $H^1(\Omega)$ functions $u$ that satisfy $u(0)=u(1)$. Its dual space is $\Hmp$. We shall denote by $\calL(L^2(\Omega))$ the space of bounded operators on $L^2(\Omega)$, by $\calJ_1 \equiv \calJ_1(L^2)$ the space of trace class operators on $L^2(\Omega)$,  and more generally by $\calJ_r$ the Schatten space of order $r$. 

A density operator is defined as a nonnegative trace class, self-adjoint operator on $L^2(\Omega)$. For $|\varrho|=\sqrt{\varrho^* \varrho}$, we introduce the following space:
$$\calE=\left\{\varrho\in \calJ_1,\mbox{ such that } \overline{\sqrt{H}|\varrho|\sqrt{H}}\in \calJ_1\right\},$$
where $\overline{\sqrt{H}|\varrho|\sqrt{H}}$ denotes the extension of the operator $\sqrt{H}|\varrho|\sqrt{H}$ to $L^2(\Omega)$. We will drop the extension sign in the sequel to ease notation. The space $\calE$ is a Banach space when endowed with the norm
$$\|\varrho\|_{\calE}=\Tr \big(|\varrho| \big)+\Tr\big(\sqrt{H}|\varrho|\sqrt{H}\big),$$
where $\Tr$ denotes operator trace. The energy space is the following closed convex subspace of $\calE$:
$$\calE_+=\left\{\varrho\in \calE:\, \varrho\geq 0\right\}.$$
Note that operators in $\calE_+$ are automatically self-adjoint since they are bounded and positive on the complex Hilbert space $L^2(\Omega)$. For any $\varrho\in \calJ_1$ with $\varrho=\varrho^*$, one can associate a real-valued local density $n[\varrho](x)$, formally defined by $n[\varrho](x)=\rho(x,x),$ where $\rho$ is the integral kernel of $\varrho$. The density $n[\varrho]$ can be in fact identified uniquely by the following weak formulation:
%\be
%\label{weak}
$$\forall \phi\in L^\infty(\Omega),\quad \Tr \big(\Phi \varrho \big)=\int_\Omega \phi(x)n[\varrho](x)dx,
$$
%\ee
where, in the left-hand side, $\Phi$ denotes the multiplication operator by $\phi$ and belongs to $\calL(L^2(\Omega))$. In the sequel, we will consistently identify a function and its associated multiplication operator. Throughout the paper, $C$ will denote a generic constant that might differ from line to line.

The next step is to introduce the minimization problem that is at the core of the closure relation \fref{clos}.

\paragraph{The minimization problem.} We will work with the Boltzmann entropy $\beta(x)=x\log x -x$. For $\varrho \in \calE_+$ and $V[n[\varrho]]\equiv V$ the Poisson potential satisfying \fref{poisson} with boundary conditions $V(0)=V(1)=0$ and density $n[\varrho]$ on the right-hand side, we introduce the free energy $F$ defined by 
\be \label{deffreeE}
F(\varrho)=\Tr \big( \beta(\varrho) \big)+ \Tr \big( \sqrt{H_0} \varrho \sqrt{H_0} \big) + \Tr \big( V_0  \varrho \big)+\frac{1}{2} \|\nabla V\|^2 _{L^2}.
\ee
Note that all terms above are well defined when $\varrho \in \calE_+$: on the one hand, it is direct to see that $n[\varrho] \in W^{1,1}(\Omega)$, and therefore elliptic regularity shows that the last term above is finite; on the other hand, the entropy term is finite according to \fref{ent2} in Lemma \ref{below} further. It is moreover a classical fact that the mapping $\varrho \mapsto \Tr(\beta(\varrho))$ is strictly convex (see e.g. \cite{MP-JSP}, Lemma 3.3, for a proof), and therefore $F$ is strictly convex as well.

The theorem below characterizes  the minimizers of $F$ under a global density constraint. They will be shown to be the equilibrium solutions to \fref{qdd}-\fref{poisson}-\fref{clos}. The proof can be found in \cite{NierCPDE}, up to minor modifications. 

\begin{theorem} \label{SP} (The global minimization problem). Let $N \in \Rm_+^*$. The problem
$$
\textrm{min } F(\varrho), \quad \textrm{for} \quad \varrho \in \calE_+ \quad \textrm{with} \quad \Tr \big( \varrho \big)= N,
$$
admits a unique solution that reads
$$
\varrho_\infty=\exp \big(- (H + A_\infty)\big),
$$
where $A_\infty=V_\infty-\epsilon_F \in \Hp$, for $\epsilon_F$ a constant and 
$$
- \Delta V_\infty= n[\varrho_\infty], \qquad V_\infty(0)=V_\infty(1)=0. 
$$
Moreover, there exists a constant $\underline{n}_\infty>0$ such that $n[\varrho_\infty](x)\geq \underline{n}_\infty$, $\forall x \in \overline{\Omega}$.
\end{theorem}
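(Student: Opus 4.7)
The plan is to apply the direct method of the calculus of variations, following the approach of \cite{NierCPDE}. I would first verify that $F$ is bounded below on the admissible set $\calA_N=\{\varrho\in\calE_+:\Tr(\varrho)=N\}$. The kinetic term $\Tr(\sqrt{H_0}\varrho\sqrt{H_0})$ and the Poisson term $\tfrac12\|\nabla V\|_{L^2}^2$ are nonnegative, the potential term is controlled by $\|V_0\|_{L^\infty}N$, and the entropy term is bounded from below by the Lieb--Thirring-type inequality alluded to in the paragraph following \fref{deffreeE}. This simultaneously gives a finite infimum and shows coercivity: any minimizing sequence $(\varrho_n)$ is bounded in $\calE$.

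Second, I would extract a limit $\varrho_\infty\in\calE_+$ via weak compactness in $\calJ_1$ (for $\varrho_n$ itself and for $\sqrt H\varrho_n\sqrt H$). The trace constraint passes to the limit thanks to the compact embedding of the form domain of $H$ into $L^2(\Omega)$, which precludes loss of mass at infinity in spectrum. Weak lower semicontinuity of $F$ then follows from convexity and lower semicontinuity of $\beta$ (for the entropy term this is established via a Klein-inequality spectral truncation argument as in \cite{MP-JSP}), strong $H^1$-convergence of $V[n[\varrho_n]]$ obtained from $L^1$-convergence of the densities combined with elliptic regularity, and the trivial weak lower semicontinuity of the kinetic and potential pieces. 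Hence $\varrho_\infty$ is a minimizer, and uniqueness is immediate from the strict convexity of $\Tr(\beta(\varrho))$ mentioned after \fref{deffreeE}.

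Third, I would identify the minimizer through the Euler--Lagrange equation. Taking a trace-preserving admissible perturbation $\delta\varrho$ and using $\beta'(x)=\log x$ together with an integration by parts for the Poisson energy (invoking $-\Delta V_\infty=n[\varrho_\infty]$ with $V_\infty(0)=V_\infty(1)=0$), the first-order optimality condition reads
$$
\Tr\big((\log\varrho_\infty+H+V_\infty-\epsilon_F)\,\delta\varrho\big)=0,
$$
where $\epsilon_F$ is the Lagrange multiplier for the constraint $\Tr(\varrho)=N$. This yields $\varrho_\infty=\exp(-(H+A_\infty))$ with $A_\infty=V_\infty-\epsilon_F$; since $n[\varrho_\infty]\in W^{1,1}(\Omega)$, elliptic regularity for the Poisson equation in 1D gives $V_\infty\in H^2(\Omega)\cap\Hp$, and hence $A_\infty\in\Hp$.

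Finally, I would derive the density lower bound. The 1D Sobolev embedding $\Hp\hookrightarrow C(\overline{\Omega})$ gives $A_\infty\in L^\infty(\Omega)$, so $H+A_\infty$ is self-adjoint with compact resolvent; its ground state $\phi_0$ is continuous and strictly positive on $\overline{\Omega}$ by the standard Perron--Frobenius argument for periodic Schr\"odinger operators with bounded potentials, and the ground eigenvalue $\lambda_0$ is finite. Keeping only the first term in the spectral expansion of \fref{clos} then gives
$$
n[\varrho_\infty](x)\geq e^{-\lambda_0}|\phi_0(x)|^2\geq e^{-\lambda_0}\min_{\overline{\Omega}}|\phi_0|^2=:\underline{n}_\infty>0.
$$
The main obstacle in this plan is the rigorous justification of the Euler--Lagrange step, since $\log\varrho_\infty$ is a priori ill-defined on $\ker\varrho_\infty$: one must show that a minimizer has trivial kernel, which is typically handled by restricting the perturbation to the spectral subspace $\chi_\eta(\varrho_\infty)\mathcal H$ for a cutoff $\chi_\eta$ supported away from $0$ and then passing to the limit $\eta\to 0$, as in \cite{NierCPDE}.
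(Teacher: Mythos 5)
Your proposal is correct and follows essentially the same route as the paper, which simply delegates existence, uniqueness, and the characterization $\varrho_\infty=\exp(-(H+A_\infty))$ to \cite{NierCPDE} (the direct method plus the Euler--Lagrange/spectral-cutoff argument you outline) and then obtains the lower bound exactly as you do, by noting $A_\infty\in L^\infty$, invoking the Krein--Rutman (Perron--Frobenius) positivity of the ground state $\phi_0$ on $\overline{\Omega}$, and keeping only the first term $e^{-\lambda_0}|\phi_0|^2$ in the spectral expansion of the density.
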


The minimization problem of the last theorem can be recast into a Schr\"odinger-Poisson system as in \cite{NierCPDE}: since $V_0+A_\infty \in L^\infty(\Omega)$, the operator $H+A_\infty$ with domain $D(H)$ given in \fref{domainH} is bounded below and has a compact resolvant; denoting by $(\lambda_p, \phi_p)_{p \in \Nm}$ the spectral decomposition of $H+A_\infty$ (counting multiplicity and $(\lambda_p)_{p \in \Nm}$ nondecreasing), we have, a.e. in $\Omega$,
$$
(H+V_\infty-\epsilon_F) \phi_p= \lambda_p \phi_p, \qquad \textrm{and} \qquad n[\varrho_\infty]=\sum_{p \in \Nm} e^{- \lambda_p} |\phi_p|^2.
$$
Note that $n[\varrho_\infty]$ is in $L^1(\Omega)$ since $\varrho_\infty$ is trace class. The strict positivity of the density is not addressed in \cite{NierCPDE}: it follows from the fact that the ground state $\phi_0 \in D(H) \subset C^0(\overline{\Omega})$ verifies $\phi_0(x)>0$ on $\overline{\Omega}$ according to the Krein-Rutman theorem.

The next theorem addresses the minimizers of $F$ under local constraints, which is a much more difficult problem. Its proof can be found in \cite{MP-JSP}, while the representation formula \fref{repform} is in \cite{MP-JDE} (with a slight adaptation to non-zero external potentials). Note that since the density $n$ is given, the Poisson potential is known. 

\begin{theorem} \label{local}  (The local minimization problem). Let $n \in \Hp$, nonnegative. Then, the problem
$$
\textrm{min } F(\varrho), \quad \textrm{for} \quad \varrho \in \calE_+ \quad \textrm{with} \quad n[\varrho]=n,
$$
admits a unique solution. If moreover $n>0$ on $\overline{\Omega}$, the minimizer is characterized by
$$
\varrho[n]=\exp \big(- (H + A[n])\big),
$$
where $A[n]$ belongs to  $\Hmp$ and is given by the implicit relation,  for $\varrho \equiv \varrho[n]$,
\be \label{repform}
A[n]=-V_0+\frac{1}{n}\left(\frac{1}{2}\Delta n +n[\nabla \varrho \nabla]-n[\varrho \log \varrho] \right).
\ee
%with
%$$
%- \Delta V= n[\varrho]=n, \qquad V(0)=V(1)=0. 
% $$
\end{theorem}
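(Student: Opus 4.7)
The plan is to obtain existence by the direct method of the calculus of variations, uniqueness by the strict convexity of $F$, and the explicit characterization by writing down the Euler--Lagrange equation associated to the pointwise density constraint $n[\varrho]=n$. For existence, I would first verify that the admissible set is nonempty (e.g.\ via a rank-one candidate built from a regularization of $\sqrt{n}$). The functional $F$ is coercive on $\calE_+\cap\{n[\varrho]=n\}$: the entropy bounds $\Tr(\varrho)$ from below (as announced in the paper), the kinetic term $\Tr(\sqrt{H_0}\varrho\sqrt{H_0})$ controls the $\calE$ norm, $V_0\in L^\infty(\Omega)$ is lower order, and the Poisson energy is nonnegative and controlled by $n$. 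From a minimizing sequence $(\varrho_k)$ I would extract a subsequence converging in the $\calJ_1$ weak-$\ast$ topology to some $\varrho_\star\in\calE_+$; testing against multiplication operators by $L^\infty$ functions shows $n[\varrho_\star]=n$, and the standard lower semi-continuity of $\varrho\mapsto\Tr(\beta(\varrho))$ along weak-$\ast$ convergent trace-class sequences, combined with lower semi-continuity of the kinetic term, yields $F(\varrho_\star)\leq \liminf_k F(\varrho_k)$. Uniqueness follows at once from the strict convexity of $\varrho\mapsto\Tr(\beta(\varrho))$ recalled in the paper together with the convexity of the constraint set.

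For the characterization under $n>0$ on $\overline{\Omega}$, I would test the optimality of $\varrho[n]$ against admissible perturbations $\varrho_t = \varrho[n] + t\,\omega$ with $\omega=\omega^*\in\calJ_1$ satisfying $n[\omega]=0$, so that $n[\varrho_t]=n$ to first order. Differentiating $F(\varrho_t)$ at $t=0$, and noticing that the Poisson term contributes nothing thanks to $n[\omega]=0$, yields the Euler--Lagrange identity
\begin{equation*}
\Tr\bigl((\log\varrho[n]+H)\,\omega\bigr)=0
\end{equation*}
for every such $\omega$. Since $n[\omega]=0$ is equivalent to $\Tr(\Phi\omega)=0$ for all multiplication operators $\Phi$ by $L^\infty$ functions, a duality argument identifies $\log\varrho[n]+H$ with $-A[n]$ for a real distribution $A[n]$, so that $\varrho[n]=\exp(-(H+A[n]))$.

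The representation formula \fref{repform} then follows from a kernel computation. Since $\varrho[n]$ has no zero eigenvalue, $\log\varrho[n]=-(H+A[n])$ on all of $L^2(\Omega)$, and as $\varrho[n]$ commutes with $\log\varrho[n]$, symmetrization gives
\begin{equation*}
n[\varrho\log\varrho] = -\tfrac12\bigl(n[H\varrho]+n[\varrho H]\bigr) - A[n]\,n.
\end{equation*}
Writing $H=-\Delta+V_0$ and evaluating the integral kernel $\rho(x,y)$ of $\varrho$ on the diagonal yields the identity
\begin{equation*}
\tfrac12\bigl(n[-\Delta\varrho]+n[-\varrho\Delta]\bigr)=-\tfrac12\Delta n - n[\nabla\varrho\nabla],
\end{equation*}
which is immediate from $\Delta n(x)=(\partial_1+\partial_2)^2\rho(x,y)\big|_{y=x}$ and from the kernel $-\partial_1\partial_2\rho(x,y)$ of $\nabla\varrho\nabla$. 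Solving for $A[n]$ and dividing by $n$, which is legitimate thanks to $n\geq\underline n>0$, produces \fref{repform} with $A[n]\in\Hmp$.

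The main obstacle is making the Euler--Lagrange step rigorous. First, one must exclude zero eigenvalues of $\varrho[n]$ so that $\log\varrho[n]$ is well defined as a (generally unbounded) self-adjoint operator; this should follow from the infinite slope $\beta'(0^+)=-\infty$, since any zero eigenvalue could be opened up by a rank-one perturbation that preserves the density (using $n>0$) and strictly lowers $F$. Second, the class of admissible perturbations must be rich enough to pin down the Lagrange multiplier $A[n]$ in an appropriate distribution space, and the positivity $n\geq\underline n>0$ is exactly what guarantees $A[n]\in\Hmp$ after the final division. These two points are what make the hypothesis $n>0$ on $\overline{\Omega}$ indispensable for the explicit characterization, as opposed to mere existence and uniqueness.
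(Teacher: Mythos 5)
The paper does not prove this theorem at all: it is quoted verbatim from the author's earlier works (the existence/uniqueness part from \cite{MP-JSP}, the representation formula \fref{repform} from \cite{MP-JDE}), so there is no internal proof to compare against. Your outline of the direct method for existence, strict convexity for uniqueness, and the kernel computation turning $\log\varrho=-(H+A)$ into \fref{repform} (via $\Delta n=(\partial_x+\partial_y)^2\rho|_{y=x}$ and the kernel $-\partial_x\partial_y\rho$ of $\nabla\varrho\nabla$) is the right formal skeleton, and the algebra in the last step is correct.

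However, the Euler--Lagrange step as you propose it would fail, and this is exactly the difficulty that makes the cited references nontrivial. The cone $\calE_+$ has empty interior in $\calJ_1$: the minimizer is compact, its eigenvalues accumulate at $0$, and for a generic self-adjoint $\omega$ with $n[\omega]=0$ the operator $\varrho[n]+t\omega$ is not nonnegative for any $t\neq 0$. So two-sided affine perturbations are inadmissible and do not yield $\Tr((\log\varrho[n]+H)\omega)=0$; one needs either one-sided variations combined with a separate full-rank argument, or multiplicative/exponential perturbations of the potential rather than of the operator. Your proposed fix for the full-rank issue is also not viable as stated: a rank-one perturbation $t\,|\psi\rangle\langle\psi|$ contributes the nonnegative density $t|\psi|^2\not\equiv 0$, so it cannot ``preserve the density''; one must compensate by subtracting a nonnegative operator $\sigma$ with $n[\sigma]=|\psi|^2$ supported in the range of $\varrho[n]$ and such that $\varrho[n]-t\sigma\geq 0$ for small $t$, whose construction (using $n\geq\underline{n}>0$) is a substantial part of the argument in \cite{MP-JSP}. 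Finally, the identification of the Lagrange multiplier requires making sense of $\Tr((\log\varrho+H)\omega)$ for an unbounded operator $\log\varrho+H$ and characterizing the annihilator of $\{\omega:\ n[\omega]=0\}$ in the correct duality, which is where the precise space $\Hmp$ for $A[n]$ comes from; this is asserted rather than argued in your sketch. In short, the proposal is a reasonable roadmap but the two steps you flag as ``obstacles'' are the theorem, and the mechanisms you suggest for overcoming them do not work in the form given.
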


\bigskip
The definition of $\varrho[n]$ above shows that the closure relation \fref{clos} is equivalent to define $A$ as the chemical potential arising from the minimization of the free energy $F(\varrho)$ under the local minimization constraint $n[\varrho]=n$. Note moreover that we have the relations
$$
n[\nabla \varrho \nabla]=-\sum_{p\in\Nm} \rho_p |\nabla \phi_p|^2, \qquad n[\varrho \log \varrho]=\sum_{p\in\Nm} \left(\rho_p \log \rho_p \right) |\phi_p|^2,
$$
which are both defined in $L^1(\Omega)$ since $\varrho[n] \in \calE_+ $. This is clear for the first term, for the second one this is a consequence of Lemma \ref{below} that shows that $\varrho \log \varrho$ is trace class. 

With Theorem \ref{local} at hand, it is possible to recast QDD as a gradient flow, at least formally. For $n$ given as in the theorem, and for $\lambda\equiv \lambda(x)$, define indeed the Lagrangian $L_n(\varrho,\lambda)$ by
$$
L_n(\varrho,\lambda)=F(\varrho)+(n[\varrho]-n, \lambda).
$$
The minimizer $\varrho[n]$ is then such that
$$
F(\varrho[n])=\min_{\varrho,\lambda} L_n(\varrho,\lambda).
$$
For $\lambda[n]$ the solution Lagrange parameter, a standard calcul of variations argument shows that
$$
\forall \delta n, \qquad \left. \frac{d}{ dt}  F(\varrho[n+ t \delta n]) \right|_{t=0}=-( \delta n, \lambda[n]).
$$
This shows, introducing  $A[n]:=V[n]+\lambda[n]$, that the $L^2$ G\^ateaux derivative of $F(\varrho[n])$ with respect to $n$, denoted $\delta F(\varrho[n])/\delta n$, verifies
$$
 \frac{\delta F(\varrho[n])}{\delta n}=-(A[n]-V[n]).
$$
The quantum drift-diffusion equation then becomes 
\be \label{GF}
 \frac{\partial n}{\partial t}- \nabla \cdot \left( n \nabla \frac{\delta F(\varrho[n])}{\delta n} \right)=0,
\ee
which is the classical form of a gradient flow in the Wasserstein space. The theory of gradient flows in Wasserstein spaces is based on the so-called \textit{geodesic $\lambda-$convexity} of the functional $F$. Once this property is established, the standard theory then provides the existence and uniqueness of solutions to equations of the form \fref{GF}, see e.g. \cite{ambrosio}. The theory covers cases where $F$ is a nonlinear, local, functional of $n$, or non-local functionals of convolution type. Here, our functional $n \mapsto F(\varrho[n])$ is non-local and not of convolution type, and is much harder to analyze. We were not in particular able to prove the geodesic convexity, and therefore had to follow a different route. Note that it is mentioned in \cite{ambrosio}, page 290, that even in the simpler case of the Derrida-Lebowitz-Speer-Spohn equation  where the first variation of the functional is $-\Delta \sqrt{n}/\sqrt{n}$, it is not known if the functional $F$ has the geodesic convexity property.

We turn now to the semi-discretized version of \fref{qdd} introduced in \cite{QDD-SIAM}, which can be seen as the minimizing movement scheme of the theory of gradient flows. The discrete version will be the starting point of our analysis.
% and the optimality conditions are
% $$
% \qquad \forall \delta \varrho, \qquad DF[\varrho](\delta \rho)+(n[\delta \varrho],\lambda)=0, \qquad n[\varrho]=n.
% $$
% The first equality yields
% $$
% \varrho_\lambda=\exp(-(H+V[n]+\lambda)
% $$
% $$
% \calF[n]=F(\varrho[n]).
% $$

\paragraph{The semi-discretized equation.}
For $n_0$ given, the system reads
\begin{numcases}{} 
\frac{n_{k+1}-n_k}{\Delta t}+ \nabla \big(n_{k} \nabla (A_{k+1}-V_{k+1}) \big)=0\label{semiqdd}\\[3mm]
-\Delta V_{k+1}=n_{k+1}\label{poisson2} \\[3mm]
n_{k+1}=\sum_{p \in \Nm} e^{-\lambda_p[A_{k+1}]} |\phi_p[A_{k+1}]|^2\label{den2} 
\end{numcases}
where $(\lambda_p[A], \phi_p[A])_{p \in \Nm}$ are the spectral elements of the Hamiltonian $H[A]$ with the same domain as in \fref{domainH}. Solutions $A_{k}$ to \fref{semiqdd} are sought in $\Hp$, and those of  $\fref{poisson2}$ in $H^1_0(\Omega)$. Before stating an existence theorem for \fref{semiqdd}-\fref{poisson2}-\fref{den2}, we introduce the following functionals:
$$
\calF[n]=- \int_\Omega n (A[n]+1) dx+\frac{1}{2} \int_{\Omega} \left|\nabla V [n] \right|^2 dx, 
$$
which is formally equivalent to $F(\varrho[n])$, and 
% \bee
% \Sigma[n]&=&S(\varrho[n],\varrho[n_\infty])+\frac{1}{2} \int_{\Omega} \left|\nabla (V [n]-V[n_\infty]) \right|^2 dx\\
% &=& -\int_\Omega (n (A[n]-A[n_\infty]) +n-n^\infty)+\frac{1}{2} \int_{\Omega} \left|\nabla (V [n]-V[n_\infty]) \right|^2 dx
% \eee

\be \label{defSig}
\Sigma[n]= -\int_\Omega \big(n (A[n]-A[n_\infty]) +n-n_\infty \big)dx+\frac{1}{2} \int_{\Omega} \left|\nabla (V [n]-V[n_\infty]) \right|^2 dx,
\ee
which is essentially the relative entropy between $\varrho[n]$ and $\varrho_\infty$ (above $n_\infty=n[\varrho_\infty]$). Above, the equilibrium $\varrho_\infty$ is the solution to the minimization problem of Theorem \ref{SP} with constraint $\Tr(\varrho_\infty)= \|n_0\|_{L^1}$.

According to \cite{QDD-SIAM}, Theorem 3.1, the following result holds.
\begin{theorem} \label{thsemi} Let $n_0 \in C^0(\overline{\Omega})$ such that $n_0>0$ and $V_0 \in L^\infty(\Omega)$. Then, the system  \fref{semiqdd}-\fref{poisson2}-\fref{den2} admits a unique solution such that, for all $k \in \Nm$, $A_k \in \Hp$, $V_k \in H^1_0(\Omega)$ and $n_k \in C^0(\overline{\Omega})$ with $n_k>0$. We have moreover the following relations, for all $k \in \Nm$:
\begin{align}
\label{L1}&\int_{\Omega} n_k dx=\int_{\Omega} n_0 dx\\
\label{freeE} &\calF[n_k] + \Delta t\sum_{j=0}^{k-1} \int_\Omega n_{j} | \nabla (A_{j+1}-V_{j+1})|^2 dx \leq \calF[n_0]\\
\label{relat} & \Sigma[n_k] \leq \Sigma [n_0].
\end{align}
\end{theorem}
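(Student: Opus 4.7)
My strategy would be an inductive construction via a JKO-type variational scheme. Given $n_k \in C^0(\overline\Omega)$ with $n_k > 0$, I would define the admissible set $\calA_k = \{ n \in \Hp : n \geq 0,\ \int_\Omega n \, dx = \int_\Omega n_k \, dx\}$ and the functional
\[
J_k(n) = \calF[n] + \frac{\Delta t}{2} \int_\Omega n_k |\nabla \phi_{n,k}|^2 \, dx,
\]
where $\phi_{n,k} \in \Hp$ solves the weighted elliptic problem $-\nabla(n_k \nabla \phi) = (n - n_k)/\Delta t$, well posed modulo additive constants by the positivity of $n_k$ and the zero-mean compatibility condition built into $\calA_k$. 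This is the natural minimizing-movement discretization of the formal gradient-flow formulation \fref{GF}, consistent with the identity $\delta F(\varrho[n])/\delta n = -(A[n] - V[n])$ recalled in the introduction.

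I would then prove that $J_k$ admits a unique minimizer $n_{k+1}$. Coercivity on $\calA_k$ is provided by the entropy contribution in $\calF[n] = F(\varrho[n])$ together with the electrostatic term $\|\nabla V[n]\|_{L^2}^2$, while weak lower semi-continuity of $n \mapsto F(\varrho[n])$ relies on the strict convexity of $\varrho \mapsto \Tr(\beta(\varrho))$ and the variational characterization of $\varrho[n]$ in Theorem \ref{local}. The drift term is convex and weakly continuous as a positive quadratic form with coefficient $n_k$, and strict convexity of $J_k$ yields uniqueness. A bootstrap based on Theorem \ref{local} identifies the minimizer as $n_{k+1} = n[\exp(-(H + A_{k+1}))]$ for some $A_{k+1} \in \Hp$, the upgrade from $\Hmp$ to $\Hp$ being provided by the optimality condition. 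Since $\Hp \hookrightarrow C^0(\overline\Omega)$ in dimension one, the Krein-Rutman argument used in Theorem \ref{SP} then yields the strict positivity and continuity of $n_{k+1}$.

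Computing the Euler-Lagrange condition completes the construction. Taking admissible variations $n \to n + \varepsilon m$ with $\int m = 0$ and combining $\delta F(\varrho[n])/\delta n = -(A[n] - V[n])$ with the elementary identity $\int n_k \nabla \phi_{n_{k+1},k} \cdot \nabla \delta \phi \, dx = \int \phi_{n_{k+1},k}\, m/\Delta t \, dx$ yields the pointwise relation $A_{k+1} - V_{k+1} = \phi_{n_{k+1},k}$ up to an additive constant absorbed into $A_{k+1}$. Applying $\nabla(n_k \nabla \cdot)$ recovers \fref{semiqdd}, while \fref{poisson2} and \fref{den2} hold by the definitions $V_{k+1} := V[n_{k+1}]$, $A_{k+1} := A[n_{k+1}]$. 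Mass conservation \fref{L1} is the admissibility constraint (equivalently, it comes from integrating \fref{semiqdd} and using periodicity). The free energy dissipation \fref{freeE} follows from $J_k(n_{k+1}) \leq J_k(n_k) = \calF[n_k]$, since $\phi_{n_k, k} \equiv 0$, and telescoping in $k$. Finally, \fref{relat} follows from \fref{freeE} together with the observation that $\Sigma[n]$ and $\calF[n] - \calF[n_\infty]$ differ by a mass-dependent correction that is constant along the discrete evolution by \fref{L1}.

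The main obstacle, in my view, is the rigorous justification of the Euler-Lagrange step: the variational derivative of the nonlinear nonlocal functional $n \mapsto F(\varrho[n])$ requires differentiability of the implicitly defined map $n \mapsto \varrho[n]$ of Theorem \ref{local}, and the representation formula \fref{repform} shows that $A[n]$ a priori lives only in $\Hmp$. One must argue that the optimality condition, tested against the drift part of $J_k$, lifts the chemical potential to $\Hp$. This is exactly the regularity needed for \fref{semiqdd} to hold in a classical sense, and it is what allows the spectral representation \fref{den2} to propagate the continuity and strict positivity of $n_{k+1}$.
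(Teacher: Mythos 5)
First, note that the paper does not actually prove Theorem \ref{thsemi}: it is imported verbatim from \cite{QDD-SIAM} (Theorem 3.1), where the semi-discrete solution is obtained by minimizing a strictly convex functional of the chemical potential $A$ rather than of the density. Your primal JKO scheme is a legitimate alternative in outline, but as written it contains a genuine circularity. To invoke the Euler--Lagrange identity $\delta\calF/\delta n=-(A[n]-V[n])$ at the minimizer $n_{k+1}$, and indeed to invoke Theorem \ref{local} to produce $A_{k+1}$ at all, you need $n_{k+1}>0$ on $\overline{\Omega}$; but your only route to positivity is Krein--Rutman applied to the spectral representation \fref{den2}, which presupposes that $A_{k+1}$ has already been constructed. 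Moreover the admissible set $\calA_k$ only imposes $n\ge 0$, so the minimizer could a priori touch zero, in which case the unconstrained Euler--Lagrange equation must be replaced by a complementarity (KKT) condition and the identification with \fref{semiqdd} fails. This is precisely the obstruction that the dual formulation of \cite{QDD-SIAM} is designed to avoid: minimizing over $A$ produces a density $\sum_p e^{-\lambda_p[A]}|\phi_p[A]|^2$ that is positive by construction. Since the strict positivity of $n_{k+1}$ is part of the conclusion of the theorem, it must come out of the construction rather than be fed into it.

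Two smaller points. The comparison $J_k(n_{k+1})\le J_k(n_k)$ only yields $\calF[n_{k+1}]+\tfrac{\Delta t}{2}\int_\Omega n_k|\nabla(A_{k+1}-V_{k+1})|^2dx\le\calF[n_k]$, i.e. \fref{freeE} with a spurious factor $1/2$; to obtain the stated inequality you should instead use the convexity of $n\mapsto\calF[n]$, namely $\calF[n_k]-\calF[n_{k+1}]\ge (A_{k+1}-V_{k+1},\,n_{k+1}-n_k)=\Delta t\int_\Omega n_k|\nabla(A_{k+1}-V_{k+1})|^2dx$, the last equality coming from \fref{semiqdd} and an integration by parts. (The weaker inequality would in fact suffice for the uses made of \fref{freeE} later in the paper, but it is not the statement.) Finally, your derivation of \fref{relat} is correct once one observes, as in Lemma \ref{FE}, that $\Sigma[n_k]=\calF[n_k]-\calF[n_\infty]$ exactly under the mass conservation \fref{L1}, so the ``mass-dependent correction'' you invoke is identically zero.
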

%$\varrho_0=\varrho[n_0]$, $\varrho_\infty=\varrho[n_\infty]$

We present in the next section our main result, obtained in part by passing to the limit in \fref{semiqdd}-\fref{poisson2}-\fref{den2}.
\section{Main result} \label{main}

% \bee
% \Sigma[n]&=&S(\varrho[n],\varrho[n_\infty])+\frac{1}{2} \int_{\Omega} \left|\nabla (V [n]-V[n_\infty]) \right|^2 dx\\
% &=& -\int_\Omega (n (A[n]-A[n_\infty] +n-n^\infty)+\frac{1}{2} \int_{\Omega} \left|\nabla (V [n]-V[n_\infty]) \right|^2 dx
% \eee

We define first the weak solutions to \fref{qdd} for an initial condition $n_0 \in L^2(\Omega)$: for $T>0$ arbitrary, we say that $(n,A,V)$ if a weak solution if $n \in L^2(0,T,L^2(\Omega))$, $A \in L^2(0,T,\Hp)$, $V \in L^2(0,T,H^1(\Omega))$, and if for any $\varphi \in C^1([0,T], \Hp)$ with $\varphi=0$ for $t \geq T$, we have
\be \label{weak1}
\int_0^T \big(n, \partial_t \varphi \big)dt + \big(n_0,\varphi(0)\big)+\int_0^T \big (n \nabla (A-V), \nabla \varphi \big) dt=0.
\ee
We introduce as well the relative entropy between two density operators $\varrho$ and $\sigma$:
$$
S(\varrho,\sigma)= \Tr \big( \varrho (\log \varrho - \log \sigma)\big) \in [0,\infty].
$$
Some properties of $S$ can be found e.g. in \cite{wehrl}. Our main result is the following.

\begin{theorem} \label{mainth} Let $n_0 \in \Hp$. Then, there exists $\delta>0$ such that the condition
\be
\label{closent}
\Sigma[n_0]=S(\varrho_0,\varrho_\infty)+\frac{1}{2} \|\nabla (V_0-V_\infty) \|^2_{L^2} \leq \delta
\ee
implies that the system \fref{qdd} admits a weak solution $(n,A,V)$, where $n \in L^\infty(0,T,\Hp)$, $\partial_t n \in L^2(0,T,\Hmp)$, $A \in L^2(0,T,\Hp)$, and $V \in L^\infty(0,T,H^1_0(\Omega))$. The associated quantum statistical equilibrium $\varrho:=\exp(-(H+A))$ satisfies $\varrho \in L^\infty(0,T,\calE_+)$ and $H_0 \varrho H_0 \in L^2(0,T,\calJ_1)$. The free energy satisfies moreover the relation, $t$ a.e., 
\be
\label{derivfree}
\frac{d}{dt} \calF[n(t)]=-\int_\Omega n(t) |\nabla (A(t)-V(t))|^2 dx.
\ee
Finally, the solutions converge exponentially fast to the equilibrium: there exists $\mu>0$ such that 
\be \label{expconv}
\calF[n(t)]-\calF[n_\infty] \leq \left(\calF[n(0)]-\calF[n_\infty]\right) e^{- \mu t}.
\ee
\end{theorem}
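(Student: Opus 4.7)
The plan is to pass to the limit $\Delta t \to 0$ in the semi-discretized system of Theorem \ref{thsemi}. Introduce piecewise-constant in time interpolants $n_{\Delta t}, A_{\Delta t}, V_{\Delta t}$ built from $(n_k, A_{k+1}, V_{k+1})$. The discrete estimates \fref{L1}--\fref{relat} already provide, uniformly in $\Delta t$, conservation of mass, a bound on the dissipation $\int_0^T \int_\Omega n_{\Delta t} |\nabla(A_{\Delta t}-V_{\Delta t})|^2 \, dx \, dt$, and control of the relative entropy $\Sigma[n_{\Delta t}(t)] \leq \Sigma[n_0] \leq \delta$. The proof is then organized around three tasks: (i) deriving a uniform strictly positive lower bound $n_{\Delta t} \geq \underline{n} > 0$; (ii) upgrading this into uniform $L^\infty(0,T;\Hp)$ and $L^2(0,T;\Hp)$ control of $n_{\Delta t}$ and $A_{\Delta t}$ that is strong enough to pass to the limit, in particular inside the non-local closure \fref{clos}; and (iii) establishing a non-commutative log-Sobolev inequality near the equilibrium to promote the free energy dissipation into exponential decay.

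For (i), Theorem \ref{SP} furnishes $n_\infty \geq \underline{n}_\infty > 0$ on $\overline{\Omega}$. The idea is to interpret $\Sigma[n]$ as the operator-level relative entropy $S(\varrho[n],\varrho_\infty)$ plus a nonnegative Poisson piece, and then extract from $\Sigma[n_{\Delta t}(t)] \leq \delta$ an $H^1$ bound of the form $\|n_{\Delta t}(t)-n_\infty\|_{\Hp} \leq C\sqrt{\delta}$. The key mechanism is an operator Pinsker-type inequality controlling $\|\varrho[n_{\Delta t}]-\varrho_\infty\|_{\calJ_1}$ by $\sqrt{S(\cdot,\cdot)}$, combined with the representation \fref{repform} and the energy dissipation to bootstrap to $H^1$-closeness of the densities. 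Because we work in dimension one, $\Hp \hookrightarrow L^\infty(\Omega)$, so choosing $\delta$ small forces $n_{\Delta t}(t) \geq \underline{n}_\infty/2$ pointwise, uniformly in $t$ and $\Delta t$. This is exactly the missing ingredient of \cite{QDD-SIAM} and is the step I expect to be hardest: it is where the one-dimensional restriction enters, and it requires carefully translating operator-level information into pointwise information on the density.

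With the lower bound in hand, step (ii) proceeds more routinely. Dividing the dissipation estimate by $n_{\Delta t} \geq \underline{n}$ yields a uniform $L^2(0,T;\Hp)$ bound on $A_{\Delta t}-V_{\Delta t}$, then on $A_{\Delta t}$ via Poisson regularity. Reading the discrete equation \fref{semiqdd} backwards and invoking \fref{repform} promotes this into a uniform $L^\infty(0,T;\Hp)$ bound on $n_{\Delta t}$ and an $L^2(0,T;\Hmp)$ bound on its discrete time derivative. Aubin--Lions gives strong $L^2(0,T;L^2)$ compactness of $n_{\Delta t}$, which, together with weak-$\star$ convergence of $A_{\Delta t}$, $V_{\Delta t}$, and $\varrho[n_{\Delta t}]$, allows passage to the limit in the weak formulation \fref{weak1}. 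The delicate point is passing to the limit in the closure $n=n[\exp(-(H+A))]$: one rewrites the discrete closure as the Euler--Lagrange relation for the local minimization problem of Theorem \ref{local}, then uses the continuity of $A \mapsto n[\exp(-(H+A))]$ proved in \cite{MP-JSP,MP-KRM,MP-JDE} in the topologies supplied by the uniform bounds, together with the strong convergence of $n_{\Delta t}$. The identity \fref{derivfree} follows by passing to the limit in \fref{freeE}, combined with the opposite inequality obtained by testing the limit equation against $A-V$ (a step that is legitimized precisely by the $L^2(0,T;\Hp)$ regularity of $A$ and the strictly positive lower bound on $n$).

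For the exponential decay \fref{expconv}, the objective is a non-commutative log-Sobolev-type inequality
\[
\calF[n]-\calF[n_\infty] \,\leq\, \frac{1}{2\mu} \int_\Omega n \, |\nabla(A[n]-V[n])|^2 \, dx,
\]
valid for all $n$ in an $\Hp$-neighborhood of $n_\infty$, in the spirit of \cite{carbone}. Once available, combining it with the free energy identity \fref{derivfree} and Gronwall yields \fref{expconv} immediately. The inequality itself should be obtained by comparison to the Gibbs state $\varrho_\infty=\exp(-(H+A_\infty))$: expand $n\mapsto \calF[n]$ to second order around $n_\infty$ using the Hessian formula inherited from the constrained minimization of $F$, check its positive-definiteness from the spectral representation of $\varrho_\infty$, and match the resulting quadratic form against the Dirichlet form $\int n|\nabla(A[n]-V[n])|^2 dx$. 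The strictly positive lower bound on $n$ is what keeps all operators nondegenerate and makes the spectral comparison quantitative.
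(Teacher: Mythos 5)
Your overall architecture matches the paper's: semi-discretization, a lower bound on the density from entropy closeness, compactness, identification of the closure relation, and a log-Sobolev inequality plus Gronwall. However, two of your key steps contain genuine gaps where your proposed route would not go through as stated. The most serious concerns the closure relation. You propose to pass to the limit in $n_k=n[\exp(-(H+A_k))]$ by invoking continuity of $A\mapsto n[\exp(-(H+A))]$ ``in the topologies supplied by the uniform bounds''. But the only convergence available for the interpolant of $A$ is \emph{weak} in $L^2(0,T,\Hp)$ -- weak in the time variable in particular -- and the map $A\mapsto \exp(-(H+A))$ is nonlinear, so no continuity statement in that topology applies; this is exactly the obstruction the paper singles out. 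The working argument goes in the opposite direction: the stability estimate \fref{rel} combined with the Klein inequality (Lemma \ref{pinsker}) transfers the \emph{strong} $L^2(0,T,L^2)$ convergence of the densities into strong $L^2(0,T,\calJ_2)$ convergence of the operators toward the local minimizer $\varrho_{eq}[n]$ of Theorem \ref{local}, and the chemical potential is then identified with the weak limit $A$ by passing to the limit in the representation formula \fref{repform}, which is linear in $\varrho$ and compatible with the weak-$*$ convergences of $\sqrt{H_0}\hat\varrho_N\sqrt{H_0}$ and $H_0\hat\varrho_N H_0$. Without \fref{rel} and \fref{repform} this step does not close.

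The second gap is the log-Sobolev inequality. Expanding $\calF$ to second order around $n_\infty$ and proving positive-definiteness of the Hessian of this non-local functional is essentially the (geodesic) convexity question that the paper explicitly states it cannot resolve; there is no reason to expect the spectral comparison you sketch to be tractable. The paper instead obtains \fref{lsob} non-perturbatively from the entropy identity $S(\varrho,\varrho_\infty)+S(\varrho_\infty,\varrho)=(A_\infty-A,n-n_\infty)$ (a consequence of \fref{rel}), from Lemma \ref{FE} identifying $S(\varrho,\varrho_\infty)+\tfrac12\|\nabla(V-V_\infty)\|_{L^2}^2$ with $F(\varrho)-F(\varrho_\infty)$, and from a weighted Klein inequality (Lemma \ref{LL1}) which self-improves the resulting bound $\|n-n_\infty\|_{L^1}^2\leq C\,S(\varrho,\varrho_\infty)\leq C\|\nabla(A-V)\|_{L^2}\|n-n_\infty\|_{L^1}$ into $\|n-n_\infty\|_{L^1}\leq C\|\nabla(A-V)\|_{L^2}$. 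Two smaller points: for \fref{derivfree}, testing the limit equation against $A-V$ is not admissible since $A-V$ is only $L^2$ in time while the weak formulation requires $C^1([0,T],\Hp)$ test functions; the paper justifies the chain rule via convexity of $F$ and difference quotients. And for the lower bound, your claimed $\Hp$-closeness $\|n-n_\infty\|_{\Hp}\leq C\sqrt{\delta}$ is both unsubstantiated and stronger than needed: the paper obtains $C^0$-closeness of order $\delta^{1/8}$ directly from the Klein inequality in $\calJ_2$ and the interpolation estimate \fref{ninfty}, which is where the one-dimensional restriction actually enters.
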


\bigskip 

Some comments are in order. First of all, this is only an existence result. For problems of the form \fref{GF}, the uniqueness is often a consequence of the geodesic convexity of the functional $F$, which is unknown at this point and explains in part the lack of a uniqueness result. Second, the condition \fref{closent} expresses that the initial state has to be sufficiently close to the equilibrium. It is a crucial point for the derivation of the bound from below for the density. The proof of the latter exploits the Sobolev embedding $H^1(\Omega) \subset L^\infty(\Omega)$, which is only valid in a one-dimensional setting. In higher dimensions, the condition \fref{closent} alone without the use of the embedding does not seem to be sufficient, and we are therefore limited to the 1D case since the bound from below is a key ingredient. Finally, the inequality \fref{expconv} implies the exponential convergence of $\varrho$ to $\varrho_\infty$ in $\calJ_2$. We have indeed, since $\varrho_\infty$ is a minimizer of the free energy under the global constraint,
$$
0 \leq F(\varrho(t))-F(\varrho_\infty) = \calF[n(t)]-\calF[n_\infty],
$$
and we will see further in Lemma \ref{FE}, in conjunction with the Klein inequality of Lemma \ref{pinsker}, that
\be \label{belF}
C \| \varrho(t)-\varrho_\infty\|^2_{\calJ_2} \leq S(\varrho(t),\varrho_\infty) \leq F(\varrho(t))-F(\varrho_\infty). 
\ee

The exponential convergence is obtained by deriving a non-commutative logarithmic Sobolev inequality in the spirit of \cite{carbone}. We will show that
\be \label{lsob}
F(\varrho)-F(\varrho_\infty) \leq C \| \sqrt{n} \nabla (A-V)\|^2_{L^2(\Omega)},
\ee
that can be recast in a more standard form as follows. For the sake of simplicity of the exposition, suppose that $\|n_0\|_{L^1}=1$, and therefore $\Tr(\varrho)=1$, and suppose as well that electrostatic effects can be neglected so the Poisson potential $V$ is zero. This implies in particular that $A_\infty$ is a constant. Introducing the operator $\calL=- [H,[H,\cdot]]$, a simple informal calculation based on the cyclicity of the trace and on the commutation between $H+A$ and $\varrho$ shows that (see \cite{QDD-JCP} for more details),
\bee
\| \sqrt{n} \nabla A \|^2_{L^2(\Omega)}&=&\| \sqrt{n} \nabla (A-A_\infty)\|^2_{L^2(\Omega)}\\
&=&-\Tr \big( (A-A_\infty) \calL \varrho \big)=\Tr \big( (\log \varrho - \log \varrho_\infty) \calL \varrho \big).
\eee
Together with \fref{belF} and \fref{lsob}, this leads to
$$
C S(\varrho ,\varrho_\infty) \leq \Tr \big( (\log \varrho - \log \varrho_\infty) \calL \varrho \big).
$$
When the latter holds for any density operator $\varrho$, the above inequality is referred to as a modified Log-Sobolev inequality of constant $C$ (for the operator $\calL$), see \cite{carbone}. Here, the inequality clearly does not hold for all $\varrho$, since any operator of the form $f(H)$ cancels the right-hand side (as $\calL f(H)=0$), leading to $\varrho_\infty=f(H)$ which is absurd when $f(x) \neq e^{-x}$. Note that the operator $\calL$ naturally arises in the derivation of QDD from the quantum Liouville equation, since the weak form of \fref{qdd} can be expressed formally as (when $V=0$),
$$
\Tr\big( \big[\partial_t \varrho - \calL \varrho \big] \varphi \big)=0, \qquad \forall \varphi.
$$
\medskip

The proof of theorem is decomposed into several steps. In section \ref{prelim}, we state various lemmas important for the proof. In section \ref{unif}, we derive a uniform bound from below for the density $n_k$ solution to the semi-discretized QDD. This leads to uniform bounds for $n_k$, $A_k$ and $V_k$, which allow us, using classical compactness arguments, to pass to the limit in \fref{semiqdd}  and to recover \fref{qdd}. This is done in section \ref{lim1}. Obtaining the closure relation \fref{clos} is the more difficult and interesting part. This is done by in section \ref{lim2} by deriving some stability estimates for local minimizers of the form of Theorem \ref{local}, and by using the representation formula \fref{repform}. Finally, the exponential convergence is addressed in section \ref{expoconv}, and is a consequence of  the inequality \fref{lsob} and the dissipation of the free energy \fref{derivfree} proved in section \ref{proofderiv}.

\section{Proof of the theorem} \label{proofth}

We start with a series of technical lemmas that will be used throughout the proof.
\subsection{Preliminary technical results} \label{prelim}

The first lemma below is crucial and provides us with a lower bound for the relative entropy. It is taken from \cite{LS}, Theorem 3. 
\begin{lemma} \label{pinsker} (Klein inequality). For all $\varrho_1$ and $\varrho_2$ in $\calE_+$, we have
$$
C \Tr \big((1+|\log \varrho_2|)(\varrho_1-\varrho_2)^2 \big)\leq S(\varrho_1,\varrho_2), 
$$
where $C$ is independent of $\varrho_1$ and $\varrho_2$
\end{lemma}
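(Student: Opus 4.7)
The plan is to reduce the operator inequality to an appropriate scalar inequality, then lift it to the non-commutative setting using an integral representation of the logarithm. I would not attempt a direct spectral decomposition since $\varrho_1$ and $\varrho_2$ need not commute; instead I would rely on a Klein/Peierls-type machinery.

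First, I would establish the corresponding scalar inequality: there exists a constant $c>0$ such that, for all $x,y$ in the range of spectra we care about (eigenvalues of trace-class operators, hence bounded, and clustering at $0$),
\begin{equation*}
c\,(1+|\log y|)(x-y)^2 \;\leq\; x\log x - x\log y - x + y \;=:\; H(x,y).
\end{equation*}
This is verified by Taylor expansion of $H(\cdot,y)$ around $x=y$ (the second derivative is $1/x$, giving a local quadratic bound), by checking the behavior as $x\to 0$ with $y$ fixed (where $H(x,y)\sim -x\log y + y$ and dominates $(1+|\log y|)y^2$), and by checking the regime $y\to 0$ with $x$ fixed (where the logarithmic blow-up of $H$ matches the $|\log y|$ weight). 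The fact that we are dealing with trace-class operators confines the spectrum to a bounded set, so a uniform $c$ can be extracted by a compactness argument on the complement of a neighborhood of the diagonal, with the Taylor estimate handling the diagonal neighborhood.

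Second, I would lift this scalar inequality to operators. The key tool is the integral representation
\begin{equation*}
\log \varrho_1 - \log \varrho_2 \;=\; \int_0^\infty \frac{1}{t+\varrho_2}\,(\varrho_1-\varrho_2)\,\frac{1}{t+\varrho_1}\,dt,
\end{equation*}
which, after multiplying by $\varrho_1$ and taking the trace, yields an expression for $S(\varrho_1,\varrho_2)$ as a positive double operator integral in $(\varrho_1-\varrho_2)$. From there, one applies Klein's lemma: for any smooth function $\varphi$ of two variables jointly such that the scalar inequality $\varphi(x,y)\geq c(1+|\log y|)(x-y)^2$ holds pointwise, the corresponding trace inequality holds as well. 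Equivalently, one may work in the spectral basis of $\varrho_2$, pass to matrix elements, use the scalar inequality eigenvalue-by-eigenvalue on the diagonal together with a Schur-complement bound on the off-diagonal contribution coming from non-commutativity, and then reassemble.

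The main obstacle is this last non-commutative step: the weight $(1+|\log \varrho_2|)$ does not commute with $\varrho_1-\varrho_2$, so $\Tr((1+|\log \varrho_2|)(\varrho_1-\varrho_2)^2)$ is not simply a diagonal quantity in any natural basis. Handling this correctly requires the double operator integral framework (or equivalently Lieb's joint convexity of the relative entropy), and this is where I would invoke the result of \cite{LS} rather than redo the estimate. A direct alternative would be to interpolate between the unweighted Pinsker-type bound $\|\varrho_1-\varrho_2\|_{\calJ_2}^2\lesssim S(\varrho_1,\varrho_2)$ and a bound with the weight $|\log \varrho_2|$ obtained by testing against $\log\varrho_2-\log\varrho_1$, but getting a uniform constant independent of $\varrho_1,\varrho_2$ forces one back into the joint convexity machinery.
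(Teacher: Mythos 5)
The paper offers no proof of this lemma: it is quoted directly as Theorem 3 of \cite{LS}. Your final fallback---invoking \cite{LS} for the hard step---is therefore exactly what the paper does, and to that extent the proposal is consistent with the source. Read as an actual argument, however, your sketch has two concrete problems.

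First, the scalar inequality $c(1+|\log y|)(x-y)^2\le x\log x-x\log y-x+y=:H(x,y)$ fails as $x\to\infty$ with $y$ fixed (the right-hand side grows like $x\log x$, the left like $x^2$), so no constant uniform over all of $\calE_+$ can be extracted ``by compactness'': elements of $\calE_+$ have no common bound on their largest eigenvalue. The constant necessarily depends on an a priori bound on the spectra (harmless in the paper's application, where all operators have the fixed trace $\|n_0\|_{L^1}$, but not justified by your argument as written). Note also that $S(\varrho_1,\varrho_2)$ as defined differs from the Bregman quantity $\Tr\big(\beta(\varrho_1)-\beta(\varrho_2)-\log(\varrho_2)(\varrho_1-\varrho_2)\big)$ by $\Tr(\varrho_2-\varrho_1)$, and only the latter is pointwise comparable to $H(x,y)$. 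Second, the non-commutative step is both miscarried and overcomplicated. The integral representation you write, once multiplied by $\varrho_1$ and traced, is \emph{linear} in $\varrho_1-\varrho_2$, so it does not produce the positive quadratic form you need. More importantly, no double operator integral, joint-convexity argument, or ``Schur-complement bound'' is required: writing $(\rho_j^1,\phi_j^1)$ and $(\rho_k^2,\phi_k^2)$ for the spectral data of $\varrho_1$ and $\varrho_2$, the classical Klein expansion gives
$$\Tr\big(\beta(\varrho_1)-\beta(\varrho_2)-\log(\varrho_2)(\varrho_1-\varrho_2)\big)=\sum_{j,k}|(\phi_j^1,\phi_k^2)|^2\,H(\rho_j^1,\rho_k^2),$$
while a direct computation, using $(\varrho_1-\varrho_2)\phi_k^2=\sum_j(\rho_j^1-\rho_k^2)(\phi_j^1,\phi_k^2)\phi_j^1$, gives
$$\Tr\big((1+|\log \varrho_2|)(\varrho_1-\varrho_2)^2\big)=\sum_{j,k}|(\phi_j^1,\phi_k^2)|^2\,(1+|\log \rho_k^2|)(\rho_j^1-\rho_k^2)^2,$$
so the operator inequality follows term by term from the scalar one (all terms are nonnegative, so the rearrangements are licit). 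The obstacle you single out---the weight $1+|\log\varrho_2|$ not commuting with $\varrho_1-\varrho_2$---is thus not an obstacle at all; the only genuine issue in a self-contained proof is the uniformity of the scalar constant.
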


The next lemma, proved in \cite{MP-JSP}, shows that the entropy is well defined for density operators in $\calE_+$.

\begin{lemma} \label{below} There exists $C>0$, such that, for all $\varrho \in \calE_+$,
\begin{align} \label{ent1}
&-\big(\Tr \sqrt{H_0} \varrho \sqrt{H_0} \big)^{1/2} \leq C \Tr \big(\beta(\varrho)\big)\\ \label{ent2}
& \Tr \big(|\beta(\varrho)|\big) \leq C\|\varrho\|_\calE.
\end{align}
\end{lemma}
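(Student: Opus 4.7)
The plan is to derive both estimates using Klein's operator inequality, the non-commutative analogue of Gibbs' variational principle, with a Gibbs reference state built from $H_0$. Applied to the convex function $\beta(x)=x\log x-x$ (with $\beta'=\log$), Klein's inequality states that for any positive trace-class $\sigma$,
$$
\Tr(\beta(\varrho)) \;\geq\; \Tr(\varrho\log\sigma)\;-\;\Tr(\sigma).
$$
Choosing $\sigma=e^{-\alpha H_0}$, writing $K:=\Tr(\sqrt{H_0}\varrho\sqrt{H_0})$ and $Z(\alpha):=\Tr(e^{-\alpha H_0})$, this yields the one-parameter family of lower bounds
$$
\Tr(\beta(\varrho)) \;\geq\; -\alpha K - Z(\alpha), \qquad \alpha>0,
$$
which serves as the core estimate for both inequalities.

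For \fref{ent1}, I would exploit the explicit spectrum of $H_0=-\Delta$ on the 1-torus, namely $\{(2\pi k)^2:k\in\Zm\}$, to obtain $Z(\alpha)\leq 1+c\alpha^{-1/2}$ for $\alpha>0$ by comparison of the sum with a Gaussian integral. I would then split into regimes: for $K\geq 1$ take $\alpha = 1/\sqrt{K}$, so that $\alpha K = \sqrt{K}$ and $Z(\alpha)\leq 1+cK^{1/4}\leq C\sqrt{K}$; for $K\leq 1$ take $\alpha = 1$, so that $\alpha K + Z(\alpha)\leq C$. Combining the regimes and adjusting constants yields $\Tr(\beta(\varrho))\geq -C\sqrt{K}$, which is \fref{ent1}.

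For \fref{ent2}, I would split $|\beta(\varrho)|=\beta_+(\varrho)+\beta_-(\varrho)$ via functional calculus, where $\beta_\pm\geq 0$ denote the positive and negative parts of $\beta$. The negative part is controlled through the identity $\Tr(\beta_-(\varrho))=\Tr(\beta_+(\varrho))-\Tr(\beta(\varrho))$: the second term is bounded using \fref{ent1}, and it remains to handle $\Tr(\beta_+(\varrho))$. Since $\beta(x)>0$ only for $x>e$, and since each eigenvalue of $\varrho$ satisfies $\rho_i\leq \Tr(\varrho)\leq\|\varrho\|_\calE$, one has $\log\rho_i\leq \log\|\varrho\|_\calE$ on the support of $\beta_+$, giving $\Tr(\beta_+(\varrho))\leq \Tr(\varrho)\log\|\varrho\|_\calE$, which closes out to the claimed bound after absorbing the logarithmic factor into the constant in the range where $\|\varrho\|_\calE$ is not too large.

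The main obstacle is the sharpness of the exponent in \fref{ent1}: the unconstrained minimization of $\alpha K+c\alpha^{-1/2}$ in $\alpha$ naturally produces the power $K^{1/3}$, not $K^{1/2}$, so combining regimes is genuinely needed. The small-$K$ regime is particularly delicate because $\Tr(\beta(\varrho))$ is bounded below only by the universal constant $-1$ on the one-dimensional kernel of $H_0$, and reconciling this with a purely multiplicative bound in $\sqrt{K}$ forces one to absorb constants carefully. A secondary subtlety in \fref{ent2} is that small eigenvalues of $\varrho$ contribute to $|\beta(\varrho)|$ through $\rho|\log\rho|$, which is not controlled by $\rho$ alone; one must use the operator-norm bound $\rho_i\leq\|\varrho\|_\calE$ to tame the logarithm on the positive part, and invoke \fref{ent1} already established to close the estimate on the negative part.
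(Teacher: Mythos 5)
First, a point of reference: the paper does not prove Lemma \ref{below}, it quotes it from \cite{MP-JSP}, so your argument can only be measured against the standard one. Your central tool for \fref{ent1} --- Klein's inequality with the Gibbs reference state $\sigma=e^{-\alpha H_0}$, giving $\Tr(\beta(\varrho))\geq -\alpha K - Z(\alpha)$ with $K=\Tr(\sqrt{H_0}\varrho\sqrt{H_0})$ --- is the right one, and your large-$K$ regime is fine. The gap is the small-$K$ regime, and it is not a matter of ``absorbing constants carefully'': on the torus $Z(\alpha)=\sum_{k\in\Zm}e^{-\alpha(2\pi k)^2}\geq 1$ for every $\alpha>0$ because of the zero mode of $H_0$, so the method can only ever produce $\Tr(\beta(\varrho))\geq -C\sqrt{K}-1$, and the additive $-1$ is irreducible. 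Indeed the homogeneous inequality \fref{ent1} fails as written: take $\varrho=\lambda P_0$ with $P_0$ the projector onto constants and $\lambda\in(0,e)$; then $K=0$ while $\Tr(\beta(\varrho))=\lambda\log\lambda-\lambda<0$. What your computation actually proves --- and what the paper needs downstream, since the Young-inequality step in Lemma \ref{estimrho} tolerates an additive constant --- is $\Tr(\beta(\varrho))\geq -C(1+K^{1/2})$ (in fact $-C(1+K^{1/3})$). You should state and prove that version rather than assert the homogeneous one.

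The same issue, in both the small- and large-norm regimes, affects your treatment of \fref{ent2}. For the positive part, $\beta(\rho_i)\leq\rho_i\log\rho_i\leq\rho_i\log(\Tr\varrho)$ gives $\Tr(\beta_+(\varrho))\leq\Tr(\varrho)\log_+(\Tr\varrho)$, which is superlinear in $\|\varrho\|_\calE$; restricting to ``the range where $\|\varrho\|_\calE$ is not too large'' abandons the complementary range, on which the lemma must also hold (and on which, e.g.\ for $\varrho=MP_0$ with $M$ large, a purely linear bound again fails). For the negative part, subtracting \fref{ent1} transfers the additive constant and the power $K^{1/2}\leq C\|\varrho\|_\calE^{1/2}$, which is not $O(\|\varrho\|_\calE)$ for small $\varrho$. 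The clean and standard route for the negative part --- the one the paper itself uses in the Appendix, see estimate \fref{eT11} --- is $|\beta(x)|\leq C_\eps x^{1-\eps}$ on $[0,M]$, followed by H\"older against the eigenvalues of $H_0+\II$ via Lemma \ref{lieb2}, which yields $\Tr(\beta_-(\varrho))\leq C\|\varrho\|_\calE^{1-\eps}$, again sublinear rather than linear. Putting the pieces together, what is provable is a bound of the form $\Tr(|\beta(\varrho)|)\leq C(1+\|\varrho\|_\calE)$ up to a logarithm of the top eigenvalue, and that is the form you should aim for (and check against the precise statement in \cite{MP-JSP}); as written, your argument closes neither inequality, and both inequalities in fact require the additive constants that your own method produces.
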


We will need as well the following Lieb-Thirring type inequalities (the elementary proof can be find in \cite{MP-JSP}, Lemma 5.3):
\begin{lemma} \label{lieb} Suppose $\varrho$ is self-adjoint and belongs to $\calE$. Then, the following estimates hold:
\begin{align}
\label{ninfty}\|n[\varrho]\|_{C^0(\overline{\Omega})} \leq C \| \varrho\|^{1/4}_{\calJ_2}  \|\varrho\|_{\calE}^{3/4}\\
\label{gradnl2}\|\nabla n[\varrho]\|_{L^2} \leq C \| \varrho\|^{1/4}_{\calJ_1}  \|\varrho\|_{\calE}^{3/4}.
\end{align}
\end{lemma}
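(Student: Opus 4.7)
The plan is to reduce first to the nonnegative case $\varrho \geq 0$ by passing to $|\varrho|$: the $\calJ_r$ and $\calE$ norms are insensitive to this change, $|n[\varrho]| \leq n[|\varrho|]$ pointwise, and the same termwise estimate controls $\nabla n[\varrho]$. Writing the spectral decomposition $\varrho = \sum_p \rho_p |\phi_p\rangle\langle\phi_p|$ with $\rho_p \geq 0$, one has $n(x) = \sum_p \rho_p |\phi_p(x)|^2$, and the argument will combine pointwise Cauchy-Schwarz identities with the one-dimensional Sobolev embedding at two different levels.

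For \eqref{gradnl2}, I start from the pointwise identity obtained by differentiating $n$ termwise and applying Cauchy-Schwarz in $p$:
\[
|\nabla n(x)|^2 \leq 4\, n(x)\, m(x), \qquad m(x) := \sum_p \rho_p |\nabla \phi_p(x)|^2.
\]
Since $\|m\|_{L^1} = \Tr(H_0 \varrho)$ and $V_0 \in L^\infty$ allows the reduction $\Tr(H_0 \varrho) \leq C\|\varrho\|_\calE$, integrating gives $\|\nabla n\|_{L^2}^2 \leq C \|n\|_{L^\infty}\|\varrho\|_\calE$. An auxiliary bound $\|n\|_{L^\infty} \leq C \|\varrho\|_{\calJ_1}^{1/2}\|\varrho\|_\calE^{1/2}$ then follows from the 1D Sobolev embedding $\|\phi_p\|_{L^\infty}^2 \leq C \|\phi_p\|_{L^2}\|\phi_p\|_{H^1}$ and one more Cauchy-Schwarz in $p$; substituting yields \eqref{gradnl2}.

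For \eqref{ninfty} the eigenfunction-level approach only brings in $\|\varrho\|_{\calJ_1}$, so I have to migrate to the Hilbert-Schmidt kernel $\rho(x,y)$ of $\varrho$, which satisfies $\|\rho\|^2_{L^2(\Omega\times\Omega)} = \|\varrho\|_{\calJ_2}^2$ and $\|\partial_y \rho\|^2_{L^2(\Omega\times\Omega)} = \|\varrho\nabla\|_{\calJ_2}^2 = \sum_p \rho_p^2 \|\nabla\phi_p\|^2_{L^2} \leq \|\varrho\|_\calL \|\varrho\|_\calE \leq \|\varrho\|_{\calJ_2}\|\varrho\|_\calE$. Applying the 1D Sobolev inequality in the $y$-variable to $\rho(x,\cdot)$ and evaluating at $y = x$, then integrating in $x$ with one Cauchy-Schwarz, produces
\[
\|n\|_{L^2}^2 \leq C \|\varrho\|_{\calJ_2}^{3/2}\|\varrho\|_\calE^{1/2}.
\]
Combined with the 1D Gagliardo-Nirenberg inequality $\|n\|_{L^\infty}^2 \leq C\|n\|_{L^2}(\|n\|_{L^2} + \|\nabla n\|_{L^2})$ and the relation $\|\nabla n\|_{L^2} \leq C\|n\|_{L^\infty}^{1/2}\|\varrho\|_\calE^{1/2}$ already derived, a Young's inequality absorbs the resulting $\|n\|_{L^\infty}^{1/2}$ factor and closes the estimate as $\|n\|_{L^\infty}^2 \leq C\|\varrho\|_{\calJ_2}^{1/2}\|\varrho\|_\calE^{3/2}$, which is \eqref{ninfty}.

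The hard part is extracting the $\|\varrho\|_{\calJ_2}^{1/4}$ factor in \eqref{ninfty}: any naive termwise Cauchy-Schwarz on $n = \sum_p \rho_p |\phi_p|^2$ only surfaces $\|\varrho\|_{\calJ_1}$, so the argument must go through the kernel level, where $\|\rho\|_{L^2(\Omega\times\Omega)} = \|\varrho\|_{\calJ_2}$ enters naturally. The closing Young step then couples this kernel-based $L^2$ estimate with the eigenfunction-based pointwise identity $|\nabla n|^2 \leq 4\, n\, m$.
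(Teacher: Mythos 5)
Your proof is correct, and since the paper only cites \cite{MP-JSP}, Lemma 5.3, for this result, your argument is in effect the standard one: the pointwise Cauchy--Schwarz identity $|\nabla n|^2\leq 4nm$ plus the 1D Agmon inequality at the eigenfunction level for \fref{gradnl2}, and the same Sobolev embedding applied to the kernel $\rho(x,\cdot)$ to surface the $\calJ_2$ norm for \fref{ninfty}. All the exponent bookkeeping checks out (using $\|\varrho\|_{\calJ_2}\leq\|\varrho\|_{\calJ_1}\leq\|\varrho\|_{\calE}$ to absorb the lower-order terms), and the a priori finiteness of $\|n\|_{L^\infty}$ needed to close the Young-inequality absorption is supplied by your auxiliary $\calJ_1$-based bound.
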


The lemma below  provides us with additional regularity on the local minimizer knowing the potential in the Hamiltonian is in $L^2(\Omega)$. This will be an important point in the identification of the limit version of \fref{den2}. The proof is given in the Appendix. 
\begin{lemma} \label{regmin} (Regularity of the minimizer). Let  $V \in L^2(\Omega)$  and define $\varrho=\exp(-(H_0+V))$. Then $\varrho \in \calE_+$ and $H_0 \varrho H_0 \in \calJ_1$, with the estimate
\be \label{estHrH}
%\Tr\big( H_0 \varrho H_0 \big) \leq
%C+ C\big(1+\|V\|^2_{L^2}\big) \left(\Tr \big( \sqrt{H_0} \varrho \sqrt{H_0} \big)+ \Tr \big( \varrho \big)\right).
\Tr\big( H_0 \varrho H_0 \big) \leq
C+ C\big(1+\|V\|^2_{L^2}\big) \|\varrho\|_\calE.
\ee
\end{lemma}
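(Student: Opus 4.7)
The strategy is to leverage the commutation $[\tilde H,\varrho]=0$, where $\tilde H:=H_0+V$ and $\varrho=e^{-\tilde H}$, reducing $\Tr(H_0\varrho H_0)$ to a spectral sum over the eigenbasis of $\tilde H$, and combining this with the Lieb--Thirring bound of Lemma~\ref{lieb}. For well-posedness, the one-dimensional Gagliardo--Nirenberg inequality $\|u\|_{L^4}^2\leq C\|u\|_{L^2}\|u\|_{H^1}$ makes $V\in L^2(\Omega)$ infinitesimally form-bounded with respect to $H_0$: for every $\eta>0$,
$$|(u,Vu)|\leq\|V\|_{L^2}\|u\|_{L^4}^2\leq\eta(u,H_0u)+C_\eta(1+\|V\|_{L^2}^2)\|u\|_{L^2}^2.$$
By the KLMN theorem, $\tilde H$ is then self-adjoint and bounded below by $-M$ with $M=O(1+\|V\|_{L^2}^2)$, and shares the form domain of $H_0$, hence has compact resolvent. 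Denoting its spectral data by $(\lambda_p,\phi_p)_{p\in\Nm}$ with $\lambda_p\to\infty$ by Weyl asymptotics, we have $\varrho=\sum_p e^{-\lambda_p}\ket{\phi_p}\bra{\phi_p}$ positive, trace class, and commuting with $\tilde H$.

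The key identity follows from $H_0\phi_p=(\tilde H-V)\phi_p=\lambda_p\phi_p-V\phi_p$ together with $(a-b)^2\leq 2a^2+2b^2$:
$$\Tr(H_0\varrho H_0)=\sum_p e^{-\lambda_p}\|H_0\phi_p\|_{L^2}^2\leq 2\,\Tr(\tilde H^2 e^{-\tilde H})+2\int_\Omega V^2\,n[\varrho]\,dx.$$
The $V^2$--integral is already of the requested form: by H\"older combined with Lemma~\ref{lieb} (using $\|\varrho\|_{\calJ_2}\leq\|\varrho\|_\calE$) one has $\|n[\varrho]\|_{L^\infty}\leq C\|\varrho\|_\calE$, so
$$\int_\Omega V^2\,n[\varrho]\,dx\leq\|V\|_{L^2}^2\|n[\varrho]\|_{L^\infty}\leq C\|V\|_{L^2}^2\|\varrho\|_\calE,$$
which is the $(1+\|V\|_{L^2}^2)\|\varrho\|_\calE$ contribution announced in the statement.

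The remaining task is to bound $\Tr(\tilde H^2 e^{-\tilde H})=\sum_p\lambda_p^2 e^{-\lambda_p}$. I would split the sum at $|\lambda_p|\leq M$ versus $\lambda_p>M$. The bulk is controlled pointwise by $\lambda_p^2\leq M^2$ and summed against $\sum_p e^{-\lambda_p}=\Tr(\varrho)\leq\|\varrho\|_\calE$, giving at most $C(1+\|V\|_{L^2}^2)^2\|\varrho\|_\calE$. The tail is handled via the elementary inequality $x^2 e^{-x}\leq C e^{-x/2}$ together with the spectral comparison $\tilde H\geq(1-\eta)H_0-M$, which reduces it to $Ce^{M/2}\sum_k e^{-c(2\pi k)^2}$, a finite quantity absorbed into the additive constant $C$ of the lemma. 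Combining with the $V^2$--contribution yields the stated estimate and, as a byproduct, $\varrho\in\calE_+$ and $H_0\varrho H_0\in\calJ_1$.

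\emph{The main obstacle} is quantitative: one must track precisely how the KLMN form-bound constants propagate through the spectral split of $\Tr(\tilde H^2 e^{-\tilde H})$, and verify that the dependence on $\|V\|_{L^2}$ accumulated from the negative eigenvalues of $\tilde H$ (whose weight $e^{-\lambda_p}$ is exponentially amplified) can be absorbed in the polynomial factor $(1+\|V\|_{L^2}^2)\|\varrho\|_\calE$. This delicate balance, together with the requirement that $V$ be form-infinitesimal with respect to $H_0$, is what restricts the argument to the one-dimensional setting where $H^1\hookrightarrow L^\infty$.
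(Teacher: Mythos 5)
Your reduction is sound in outline and even simplifies one step of the paper's argument: by writing $H_0\phi_p=\lambda_p\phi_p-V\phi_p$ and using $(a-b)^2\leq 2a^2+2b^2$ you avoid the cross terms $\Tr((H_0+V)\varrho V)$ that the paper must control separately via the Araki--Lieb--Thirring inequality; your treatment of the $\int V^2 n[\varrho]$ term matches the paper's $T_4$. However, your handling of $\Tr(\tilde H^2e^{-\tilde H})$ has two genuine quantitative gaps, and you correctly sense the danger in your closing paragraph without resolving it. First, splitting the spectrum at $M=O(1+\|V\|_{L^2}^2)$ makes the bulk contribution $M^2\Tr(\varrho)\leq C(1+\|V\|_{L^2}^2)^2\|\varrho\|_\calE$, which is quartic in $\|V\|_{L^2}$ rather than the quadratic dependence claimed in \fref{estHrH}. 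This is not cosmetic: the estimate is used downstream (for \fref{bound6}) with only $\hat A_N\in L^4(0,T,L^2)$ available, so the power of $\|V\|_{L^2}^2$ must be exactly one. The fix is to split at $0$ and to bound the negative part by $\lambda_0^2\Tr(\varrho)$ together with the sharper ground-state estimate $|\lambda_0|\leq C\|V\|_{L^2}$ (linear, not quadratic), obtained from the minimax characterization and the one-dimensional bound $\|\phi\|_{L^4}^2\leq C\|\phi\|_{L^2}^{3/2}\|\phi\|_{H^1}^{1/2}$ applied to the minimizer; this is what the paper does in its estimate of $\mu_0$.

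Second, your tail estimate is wrong as stated: after $x^2e^{-x}\leq Ce^{-x/2}$ and the comparison $\lambda_p\geq(1-\eta)\gamma_p-M$, you obtain $Ce^{M/2}\sum_k e^{-c(2\pi k)^2}$ with $e^{M/2}=e^{C(1+\|V\|_{L^2}^2)/2}$, which grows exponentially in $\|V\|_{L^2}^2$ and cannot be absorbed into a $V$-independent additive constant. The correct route (the paper's) is to note that for $\lambda_p>0$ one has $\lambda_p^2e^{-\lambda_p}\leq C_\eps\rho_p^{1-\eps}$ with $\rho_p=e^{-\lambda_p}$, and then to bound $\Tr(\varrho^{1-\eps})$ by H\"older against the eigenvalues of $H_0+\II$ using Lemma \ref{lieb2}:
$$
\sum_p\rho_p^{1-\eps}\leq\Big(\sum_p\rho_p\,\lambda_p[H_0+\II]\Big)^{1-\eps}\Big(\sum_p\lambda_p[H_0+\II]^{-(1-\eps)/\eps}\Big)^{\eps}\leq C\big(\Tr\big((H_0+\II)^{1/2}\varrho(H_0+\II)^{1/2}\big)\big)^{1-\eps}
$$
for $\eps<2/3$, which Young's inequality then absorbs into $C+C\|\varrho\|_\calE$ with constants independent of $V$. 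Finally, a minor point: the paper proves the lemma for smooth $V$ and concludes by density; your KLMN setup handles $V\in L^2$ directly, which is acceptable in 1D, but you should then verify that $\phi_p\in H^2$ (it does follow, since $\phi_p\in H^1\subset L^\infty$ gives $V\phi_p\in L^2$) so that $H_0\varrho H_0$ and the trace manipulations are legitimate.
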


The next two lemmas are classical results about eigenvalues and eigenvectors of density operators. The proofs can be found for instance in \cite{MP-JSP}, Lemmas A.1 and A.2.

\begin{lemma} \label{lieb2} Let $\varrho \in \calE_+$ and denote by $(\rho_p)_{p \in \Nm}$ the eigenvalues of $\varrho$ (nonincreasing and counted with multiplicity), associated to the orthonormal family of eigenfunctions $(\phi_p)_{p\in \Nm}$. Denote by $(\lambda_p[\calH])_{p \in \Nm}$ the eigenvalues of some Hamiltonian $\calH$ with compact resolvant. Then we have
$$
\Tr \big(\sqrt{\calH} \varrho \sqrt{\calH}\big)=\sum_{p \in \Nm} \rho_p \,\big(  \sqrt{\calH} \phi_p, \sqrt{\calH}\phi_p\big) \geq \sum_{p \in \Nm} \rho_p \,\lambda_p[\calH].
$$
\end{lemma}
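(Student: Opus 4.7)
The plan is to treat the equality and the inequality separately. For the equality, I would start from the spectral decomposition $\varrho u=\sum_{p\in\Nm} \rho_p (\phi_p,u)\phi_p$ for $u \in L^2(\Omega)$, which is available since $\varrho\in\calE_+$ is self-adjoint, compact and nonnegative. Assume first that $\Tr(\sqrt{\calH}\varrho\sqrt{\calH})$ is finite; then $\sqrt{\varrho}\sqrt{\calH}$ extends from $D(\sqrt{\calH})$ to a bounded Hilbert--Schmidt operator whose squared Hilbert--Schmidt norm equals this trace (by cyclicity), which in particular forces $\sqrt{\calH}\phi_p$ to be well defined for every $p$ with $\rho_p>0$. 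Substituting the spectral decomposition of $\varrho$ then gives
\begin{equation*}
\sqrt{\calH}\varrho\sqrt{\calH}\,u = \sum_{p\in\Nm} \rho_p\, (\sqrt{\calH}\phi_p,u)\,\sqrt{\calH}\phi_p, \qquad u\in D(\sqrt{\calH}),
\end{equation*}
and evaluating the trace in the ONB $(\phi_p)_{p\in\Nm}$ yields the first identity of the lemma.

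For the inequality I would combine Ky Fan's trace minimization principle with an Abel summation. Set $a_p=(\sqrt{\calH}\phi_p,\sqrt{\calH}\phi_p)=(\calH\phi_p,\phi_p)$, $b_p=\lambda_p[\calH]$, and $S_N=\sum_{p=0}^N a_p$, $T_N=\sum_{p=0}^N b_p$. Since the orthogonal projection $P_N$ onto $\textrm{span}\{\phi_0,\dots,\phi_N\}$ has rank $N+1$, the Courant--Fischer characterization of the first $N+1$ eigenvalues of $\calH$ gives
\begin{equation*}
S_N=\Tr(P_N \calH P_N)\geq T_N, \qquad N\in\Nm.
\end{equation*}
Using the monotonicity $\rho_p\geq \rho_{p+1}\geq 0$, Abel summation by parts yields, for each $M\in\Nm$,
\begin{equation*}
\sum_{p=0}^M \rho_p\, a_p = \rho_M S_M + \sum_{p=0}^{M-1}(\rho_p-\rho_{p+1}) S_p \geq \rho_M T_M + \sum_{p=0}^{M-1}(\rho_p-\rho_{p+1}) T_p = \sum_{p=0}^M \rho_p\, b_p,
\end{equation*}
and passing $M\to\infty$ produces the announced lower bound.

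The main technical difficulty lies in the cases where $\Tr(\sqrt{\calH}\varrho\sqrt{\calH})=+\infty$ or where some $\phi_p$ fail to belong to $D(\sqrt{\calH})$. I would handle both by truncation: apply the finite-rank case to $\varrho_N=\sum_{p=0}^N \rho_p (\phi_p,\cdot)\phi_p$ (whose range is finite dimensional, so no domain issue arises) and then let $N\to\infty$. The quadratic forms $u\mapsto (\varrho_N \sqrt{\calH}u,\sqrt{\calH}u)$ are monotone nondecreasing in $N$ and converge pointwise on $D(\sqrt{\calH})$ to the form associated with $\sqrt{\calH}\varrho\sqrt{\calH}$, so $\Tr(\sqrt{\calH}\varrho_N\sqrt{\calH})\uparrow \Tr(\sqrt{\calH}\varrho\sqrt{\calH})\in[0,\infty]$, while the partial sums appearing in the identity are explicitly monotone by construction; the identity and the inequality then pass to the limit in the extended sense $[0,\infty]$, covering all remaining cases.
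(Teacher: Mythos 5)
Your proof is correct and follows the standard route: the paper itself does not prove this lemma but defers to \cite{MP-JSP}, Lemma A.1, where essentially the same argument is used — the spectral decomposition of $\varrho$ for the equality, and Ky Fan's minimum principle ($\Tr(P_N\calH P_N)\geq\sum_{p\leq N}\lambda_p[\calH]$) combined with Abel summation against the nonincreasing weights $\rho_p$ for the inequality. The only imprecision is the claim that finite rank of $\varrho_N$ removes all domain issues; one still needs each $\phi_p$ to lie in the form domain of $\calH$ for $(\sqrt{\calH}\phi_p,\sqrt{\calH}\phi_p)$ to be finite, but if some $\phi_p$ with $\rho_p>0$ fails to lie there, the left-hand side equals $+\infty$ in the quadratic-form sense and the inequality is trivial, so nothing is lost.
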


\begin{lemma} \label{lipspec} Let $\varrho$ and $\varrho_N$ be two nonnegative trace class operators such that $\varrho_N$ converges strongly to $\varrho$ in $\calL(L^2(\Omega))$, and  denote by $(\rho_p)_{p \in \Nm}$ and $(\rho_p^N)_{p \in \Nm}$ the eigenvalues of $\varrho$ and $\varrho_N$. Then, there exist a sequence of orthonormal eigenbasis $(\phi_p^N)_{p \in \Nm}$ of $\varrho_N$, and an orthonormal eigenbasis $(\phi_p)_{p \in \Nm}$ of $\varrho$, such that, 
$$
\forall p\in \Nm,\qquad \lim_{N\to \infty} \rho_p^N =\rho_p, \qquad  \lim_{N\to \infty}\|\phi_p^N-\phi_p\|_{L^2(\Omega)}=0.
$$
\end{lemma}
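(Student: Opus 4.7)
The plan is to establish the spectral convergence in two stages: first the convergence of eigenvalues, then the simultaneous construction of convergent eigenbases, using the min-max principle together with the compactness supplied by the trace class hypothesis. The starting observation is that strong convergence $\varrho_N\to\varrho$ in $\calL(L^2)$ automatically yields $(u,\varrho_N v)\to(u,\varrho v)$ for every fixed $u,v\in L^2$, and by Banach--Steinhaus the family $\{\varrho_N\}$ is uniformly bounded. A useful consequence that I would record at the outset is that whenever $\psi_N\rightharpoonup\psi$ weakly in $L^2$, one has $\varrho_N\psi_N\to\varrho\psi$ strongly: indeed, writing $\varrho_N\psi_N-\varrho\psi=\varrho(\psi_N-\psi)+(\varrho_N-\varrho)\psi_N$, the first term tends to zero by compactness of $\varrho$, and the second by strong convergence together with the fact that the weakly convergent sequence $\psi_N$ lives in a relatively norm-compact set after applying $\varrho_N$ (which is again compact, uniformly bounded in operator norm).

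The eigenvalue convergence would then be proved through the Courant--Fischer characterization
\[
\rho_p=\sup_{\dim V=p+1}\inf_{\substack{v\in V\\ \|v\|=1}}(v,\varrho v),\qquad \rho_p^N=\sup_{\dim V=p+1}\inf_{\substack{v\in V\\ \|v\|=1}}(v,\varrho_N v).
\]
Testing in the formula for $\rho_p^N$ with the fixed subspace spanned by the first $p+1$ eigenvectors of $\varrho$, I would get $\liminf_N\rho_p^N\ge\rho_p$ from the inner product convergence applied to a finite collection of vectors. For the reverse inequality, I would pick, for each $N$, a unit vector $\psi_N$ in the $(p+1)$-dimensional top spectral subspace of $\varrho_N$ realizing the infimum; extracting a weakly convergent subsequence $\psi_N\rightharpoonup\psi$ with $\|\psi\|\le 1$, the compactness observation above gives $(\psi_N,\varrho_N\psi_N)\to(\psi,\varrho\psi)$. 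The top subspaces themselves admit weakly convergent orthonormal bases, and Gram--Schmidt in the limit produces a $(p+1)$-dimensional subspace on which $\varrho$ has quadratic form bounded by $\limsup\rho_p^N$, giving $\limsup_N\rho_p^N\le\rho_p$ by min-max for $\varrho$.

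Once $\rho_p^N\to\rho_p$ for each $p$, I would construct the eigenbases inductively. For a simple eigenvalue $\rho_p$, any weak cluster point $\phi_p$ of unit eigenvectors $\phi_p^N$ of $\varrho_N$ satisfies, via $\varrho_N\phi_p^N=\rho_p^N\phi_p^N$ and the compactness step, the identity $\varrho\phi_p=\rho_p\phi_p$ with $\|\phi_p\|=1$; this forces strong convergence $\phi_p^N\to\phi_p$ in $L^2$ (up to a phase that I fix by convention). For a degenerate eigenvalue, I would work with the spectral projector onto the eigenspace: the projectors associated to an isolated spectral cluster of $\varrho_N$ converge strongly to the spectral projector of $\varrho$ on the limit eigenspace, and a Gram--Schmidt procedure applied inside this finite-dimensional limit space produces the desired orthonormal family. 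A standard diagonal extraction then yields a single pair of eigenbases $(\phi_p)_{p\in\Nm}$ and $(\phi_p^N)_{p\in\Nm}$ with $\phi_p^N\to\phi_p$ in $L^2$ for every $p$.

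The main obstacle I anticipate is handling eigenvalue multiplicities and the corresponding ambiguity in the choice of eigenvectors: one cannot in general expect $\phi_p^N$ to converge to a prescribed eigenvector of $\varrho$, only to some element of the correct eigenspace, and the enumeration of the eigenvalues of $\varrho_N$ need not match that of $\varrho$ at level $p$ when several $\rho_p^N$ coalesce in the limit. This is what forces the proof to operate with spectral projections on entire eigenspaces, and to freely choose the limit orthonormal basis \emph{inside} each eigenspace so as to make the convergence $\phi_p^N\to\phi_p$ hold; the fact that the statement only asserts the existence of \emph{some} orthonormal eigenbases of $\varrho$ is precisely what makes this maneuver legitimate.
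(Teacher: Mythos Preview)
The paper does not prove this lemma itself; it simply refers to Lemmas~A.1 and~A.2 of \cite{MP-JSP}. Your outline follows the standard route (min--max for the eigenvalues, then spectral projections for the eigenvectors) and would be essentially correct under a stronger hypothesis, but there is a genuine gap under the hypothesis as written.

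The problem is your ``useful consequence'': the claim that $\psi_N\rightharpoonup\psi$ weakly together with $\varrho_N\to\varrho$ strongly in $\calL(L^2)$ implies $\varrho_N\psi_N\to\varrho\psi$ in norm is false. Take $\varrho_N=(\,\cdot\,,e_N)\,e_N$ with $(e_N)_{N\in\Nm}$ an orthonormal basis, $\varrho=0$, and $\psi_N=e_N$. Then $\varrho_N\to 0$ strongly, $\psi_N\rightharpoonup 0$, yet $\varrho_N\psi_N=e_N\not\to 0$. Your treatment of the term $(\varrho_N-\varrho)\psi_N$ is circular: you invoke compactness of $\varrho_N$ to argue that $\{\varrho_N\psi_N\}$ is relatively norm-compact, but that is exactly what is in question when the compact operator varies with $N$. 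The same example shows that the lemma, read literally with only strong operator convergence, cannot hold: $\rho_0^N=1$ does not converge to $\rho_0=0$. Your $\limsup$ argument has a second, related weakness: the weak limits of the orthonormal bases of the top $(p+1)$-dimensional subspaces of $\varrho_N$ may well be zero or linearly dependent, so Gram--Schmidt in the limit need not produce a $(p+1)$-dimensional test subspace.

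The paper in fact only applies the lemma after establishing $\hat\varrho_N(t)\to\varrho(t)$ in $\calJ_2$ for a.e.\ $t$ (see the proof of Lemma~\ref{lemweak}), i.e., Hilbert--Schmidt and hence operator-norm convergence. Under that hypothesis your argument is easily repaired: with $\|\varrho_N-\varrho\|_{\calL(L^2)}\to 0$ one has $(v,\varrho_N v)\to(v,\varrho v)$ uniformly on the unit ball, so the min--max comparison goes through in both directions without any weak-limit extraction, and the eigenvector construction via spectral projections on isolated clusters is then standard perturbation theory. So your plan is sound once the hypothesis is upgraded to norm convergence, which is what the paper actually has available; under strong operator convergence alone, neither your proof nor any other can succeed.
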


%\begin{lemma} \label{lemlip} Let $W_1$ and $W_2$ be in $L^2(\Omega)$ and define $\varrho_i=\exp(-(H_0+W_i))$, $i=1,2$.
%$$S(\varrho_1,\varrho_2)+\| \nabla (V_1-V_2)\|^2_{L^2} \leq (A_2-V_2-A_1+V_1,n_1-n_2).$$
%\end{lemma}

The last lemma allows us to identify the free energies $F(\varrho)$ and $\calF[n]$ when $\varrho$ is the minimizer of $F$ and when $n=n[\varrho]$. We obtain as well some bounds on the relative entropy between two minimizers in terms of the difference of their associated potentials and densities.

\begin{lemma} \label{lemlip} Let $A \in L^2(\Omega)$, and define $\varrho=\exp(-(H+A))$ with the notation $n\equiv n[\varrho]$. Let morever $W_1$ and $W_2$ be in $L^2(\Omega)$, with $\varrho_i=\exp(-(H_0+W_i))$, $n_i\equiv n[\varrho_i]$, $i=1,2$. Denote by $(n_\infty, A_\infty, V_\infty)$ the solution to the global minimization problem of Theorem \ref{SP} with $N=\Tr(\varrho)$. For $F(\varrho)$ the free energy of $\varrho$ defined in \fref{deffreeE}, $V\equiv V[n]$ the Poisson potential, $\Sigma[n]$ defined in \fref{defSig} and $S(\varrho_i,\varrho_j)$ the relative entropy between $\varrho_i$ and $\varrho_j$, we have the relations:
\begin{align}
&F(\varrho)=-(A+1,n)+\frac{1}{2}\|\nabla V\|^2_{L^2}  \label{freeA}\\
&\Sigma[n]= S(\varrho,\varrho_\infty)+\frac{1}{2}\|\nabla (V-V_\infty)\|^2_{L^2}  \label{rela}\\
&S(\varrho_1,\varrho_2)+S(\varrho_2,\varrho_1)\leq (W_2-W_1,n_1-n_2). \label{rel}
\end{align}
\end{lemma}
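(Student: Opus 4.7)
The plan is to prove all three identities by direct spectral decomposition and trace manipulation, exploiting that each operator involved is a Gibbs state $\exp(-K)$, so that $\log\exp(-K) = -K$ on its range. The common tool is the weak identity $\Tr(\Phi\varrho) = (\phi, n[\varrho])$, which I must extend from $\phi \in L^\infty(\Omega)$ to $\phi \in L^2(\Omega)$ using the regularity provided by Lemmas \ref{regmin} and \ref{lieb}.

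For \fref{freeA}, I first invoke Lemma \ref{regmin} with the $L^2$ external potential $V_0 + A$ to place $\varrho = \exp(-(H+A))$ in $\calE_+$ with $H_0\varrho H_0 \in \calJ_1$, so that every term of $F(\varrho)$ is finite. Diagonalizing $H+A$ with spectral elements $(\lambda_p,\phi_p)_{p \in \Nm}$, I compute
\[
\Tr\beta(\varrho) = \sum_p \beta(e^{-\lambda_p}) = -\sum_p (\lambda_p+1)e^{-\lambda_p} = -\Tr\bigl((H+A)\varrho\bigr) - \Tr \varrho.
\]
The splitting $\Tr(H\varrho) = \Tr(\sqrt{H_0}\varrho\sqrt{H_0}) + (V_0,n)$ then exactly cancels the kinetic and external-potential pieces of \fref{deffreeE}, leaving $F(\varrho) = -(A+1, n) + \tfrac12\|\nabla V\|^2_{L^2}$, which is \fref{freeA}.

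For \fref{rela}, I use $\log\varrho = -(H+A)$ and $\log\varrho_\infty = -(H+A_\infty)$ to get
\[
S(\varrho,\varrho_\infty) = \Tr\bigl(\varrho(\log\varrho - \log\varrho_\infty)\bigr) = \Tr\bigl(\varrho(A_\infty - A)\bigr) = -(A - A_\infty, n),
\]
so the discrepancy with \fref{defSig} reduces to $-\int_\Omega (n - n_\infty)\,dx$, which vanishes because $\varrho_\infty$ is the global minimizer at mass $N = \Tr\varrho$ (Theorem \ref{SP}), forcing $\|n\|_{L^1} = \|n_\infty\|_{L^1} = N$. The identity \fref{rel} follows by the same mechanism: $S(\varrho_1,\varrho_2) = (W_2-W_1, n_1)$ and $S(\varrho_2,\varrho_1) = (W_1-W_2, n_2)$ sum to $(W_2-W_1, n_1-n_2)$, which is in fact an equality, so the stated $\le$ is immediate.

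The main technical obstacle throughout is the rigorous identification $\Tr(\varrho W) = (W, n)$ when the multiplication potential $W$ is only in $L^2(\Omega)$, which lies outside the weak definition recalled in Section \ref{prelimi}. I would resolve this by first using Lemma \ref{regmin} to place the relevant $\varrho$ in $\calE_+$ and then Lemma \ref{lieb} to obtain $n \in C^0(\overline{\Omega})$, so that $(W, n)$ is well defined for $W \in L^2$; then by approximating $W$ by a sequence $W_k \in L^\infty(\Omega)$ in $L^2(\Omega)$, applying the weak formulation to each $W_k$, and passing to the limit on both sides via a Hilbert--Schmidt bound on $\varrho$ (using $\varrho \in \calJ_1 \subset \calJ_2$ together with the $L^\infty$ bound on $n$). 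The unbounded contribution $\Tr(H\varrho)$ appearing in the derivation of \fref{freeA} is handled analogously, relying on $H_0\varrho H_0 \in \calJ_1$ from Lemma \ref{regmin} to make $\Tr(H_0\varrho) = \Tr(\sqrt{H_0}\varrho\sqrt{H_0})$ meaningful.
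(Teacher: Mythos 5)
Your proposal is correct and follows essentially the same route as the paper: invoke Lemma \ref{regmin} to place $\varrho$ (resp.\ $\varrho_i$) in $\calE_+$ with $H_0\varrho H_0\in\calJ_1$, use $\log\varrho=-(H+A)$ to evaluate the traces, and deduce \fref{freeA}, then \fref{rel} by summing $S(\varrho_i,\varrho_j)=-(n_i,W_i-W_j)$, and \fref{rela} as the special case $W_1=V_0+A$, $W_2=V_0+A_\infty$ (the mass term vanishing since $\Tr\varrho=\Tr\varrho_\infty=N$). Your extra care in justifying $\Tr(\varrho W)=(W,n)$ for $W\in L^2(\Omega)$ via density and the $L^\infty$ bound on $n$ is a detail the paper leaves implicit, and is a valid way to close it.
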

\begin{proof} The proof essentially consists in justifying the straightforward formal calculations. According to Lemma \ref{regmin}, we have $\varrho \in \calE_+$ and  $H_0 \varrho H_0 \in \calJ_1$ since $V_0+A \in L^2(\Omega) $, and therefore
$$
\Tr \big(\sqrt{H_0} \varrho \sqrt{H_0}\big)=\Tr \big(\varrho H_0\big).
$$
In the same way, the above regularity ensures that
$$
\Tr \big( \varrho  \log \varrho\big)=-\Tr \big(\varrho (H_0+V_0+A)\big),
$$
which leads to \fref{freeA} following the definition of $F(\varrho)$. Regarding \fref{rel}, we have again from Lemma \ref{regmin} that $\varrho_i \in \calE_+$ and $H_0 \varrho_i H_0 \in \calJ_1$. This implies in particular from \fref{ninfty} that $n[\varrho_i] \in L^\infty(\Omega)$, and from \fref{ent2} that $\Tr \big(\varrho_i \log \varrho_i \big)$ is finite. Similarly, since $\log \varrho_j=-H_0-W_j$, $\Tr \big(\varrho_i \log \varrho_j \big)$ is finite. As a consequence
\bee
S(\varrho_i, \varrho_j)&=& \Tr \big( \varrho_i (\log \varrho_i - \log \varrho_j)\big)\\
&=&- (n_i,W_i-W_j),
%&=&F(\varrho_1)-F(\varrho_2)+(n_1-n_2,A_2+1)-(n_1,V_1)+(n_2,V_2)\\
\eee
which leads to \fref{rel} summing $S(\varrho_1, \varrho_2)$ and $S(\varrho_2, \varrho_1)$. The relation \fref{rela} is obtained by setting $W_i=V_0+A$ and $W_j=V_0+A_\infty$ (which is in $L^2(\Omega)$ according to Theorem \ref{SP}) in the latter equation and by identifying with \fref{defSig}.
\end{proof}
\medskip

The first step of the proof is to obtain uniform estimates for the semi-discrete problem \fref{semiqdd}=\fref{poisson2}-\fref{den2}.

\subsection{Uniform estimates for the semi-discrete problem} \label{unif}

Let $n_0 \in \Hp \subset C^0(\overline{\Omega})$ be the initial density of Theorem \ref{mainth}. We will prove in Proposition \ref{propbelow} further that the condition \fref{closent} on the initial relative entropy $S(\varrho_0,\varrho_\infty)$ implies that $n_0(x)>0$ on $\overline{\Omega}$. As a consequence, we obtain from Theorem \ref{thsemi} a unique sequence of solutions $(n_k,A_k,V_k)_{k \in \Nm}$ to the semi-discretized problem. We define then $\varrho_k:=\exp(-(H+A_k))$, which belongs to $\calE_+$ according to Lemma \ref{regmin} since $A_k \in \Hp$. By construction, we have $n[\varrho_k]=n_k$, and according to Theorem \ref{local}, $\varrho_k$ is the unique minimizer of the free energy $F(\varrho)$ under the constraint $n[\varrho]=n_k$. We introduce  moreover the notation $F_k:=F(\varrho_k)$. Note that since $H_0 \varrho_k H_0 \in \calJ_1$ by Lemma \ref{regmin}, we have $F_k=\calF[n_k]$ according to \fref{freeA}, which will be used throughout the proof. 

We have then the following lemma:

\begin{lemma} \label{estimrho} The solution to the semi-discretized system \fref{semiqdd}-\fref{poisson2}-\fref{den2} satisfies, $\forall k \in \Nm$,
\begin{align}
& \label{rhoE}\|\varrho_k \|_{\calE} \leq C\\
&\label{nH1} \|n_k \|_{H^1} \leq C\\
&\label{Fk} |F_k| \leq C,
%&\label{HHk} \Tr\big(H_0 \varrho_k H_0 \big) \leq C.
\end{align}
for a constant $C$ independent of $k$ and $\Delta t$.
\end{lemma}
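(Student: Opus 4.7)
The plan is to chain together four ingredients: the identification $F_k=\calF[n_k]$ from Lemma~\ref{lemlip}, the monotonicity \fref{freeE} from Theorem~\ref{thsemi}, the coercivity estimate \fref{ent1} from Lemma~\ref{below}, and the Lieb--Thirring bounds of Lemma~\ref{lieb}. The only delicate point is the absorption argument that turns the lower bound \fref{ent1}, which is sublinear in $\Tr(\sqrt{H_0}\varrho_k\sqrt{H_0})$, into an actual upper bound for that trace; this is the key step.

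First, I would record the upper bound on $F_k$. Since Lemma~\ref{regmin} gives $H_0\varrho_k H_0\in\calJ_1$ and $A_k\in\Hp\subset L^2(\Omega)$, formula \fref{freeA} yields $F_k=\calF[n_k]$; combined with the dissipation inequality \fref{freeE} and the fact that the sum of dissipation terms is nonnegative, this gives $F_k\leq \calF[n_0]=F(\varrho_0)$, which is finite by Theorem~\ref{local} applied to $n_0\in\Hp$ with $n_0>0$ (the positivity of $n_0$ being established in Proposition~\ref{propbelow}). Note also that by the mass conservation \fref{L1}, $\Tr(\varrho_k)=\|n_0\|_{L^1}$ for every $k$.

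Next, expand
\begin{equation*}
F_k=\Tr\bigl(\beta(\varrho_k)\bigr)+\Tr\bigl(\sqrt{H_0}\varrho_k\sqrt{H_0}\bigr)+\Tr(V_0\varrho_k)+\tfrac{1}{2}\|\nabla V_k\|_{L^2}^2,
\end{equation*}
set $a_k:=\Tr(\sqrt{H_0}\varrho_k\sqrt{H_0})\geq 0$, bound $|\Tr(V_0\varrho_k)|\leq \|V_0\|_{L^\infty}\|n_0\|_{L^1}$, drop $\|\nabla V_k\|_{L^2}^2\geq 0$, and apply \fref{ent1} to obtain
\begin{equation*}
F_k\geq a_k-C\sqrt{a_k}-C.
\end{equation*}
Combined with the upper bound from the previous paragraph, this gives $a_k-C\sqrt{a_k}\leq C$, and solving this quadratic inequality in $\sqrt{a_k}$ yields $a_k\leq C$ uniformly in $k$ and $\Delta t$. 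Since $V_0\in L^\infty$, the forms associated with $H$ and $H_0$ are equivalent modulo a multiple of $\Tr(\varrho_k)$, so $\Tr(\sqrt{H}\varrho_k\sqrt{H})\leq C$ as well; combined with $\Tr(\varrho_k)=\|n_0\|_{L^1}$, this proves \fref{rhoE}.

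Finally, with \fref{rhoE} in hand, \fref{ent2} gives $\Tr|\beta(\varrho_k)|\leq C$, so each term in the expansion of $F_k$ is bounded, yielding $|F_k|\leq C$, i.e.\ \fref{Fk}. The bound \fref{nH1} then follows from the Lieb--Thirring estimates of Lemma~\ref{lieb}: applied with $\varrho_k$, using $\|\varrho_k\|_{\calJ_1}=\|n_0\|_{L^1}$ and $\|\varrho_k\|_{\calJ_2}\leq \|\varrho_k\|_{\calJ_1}$, estimate \fref{ninfty} gives $\|n_k\|_{C^0(\overline{\Omega})}\leq C$, and \fref{gradnl2} gives $\|\nabla n_k\|_{L^2}\leq C$, whence $\|n_k\|_{H^1}\leq C$ and the proof is complete. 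The main obstacle is the absorption argument in the third step, which relies crucially on the sublinearity in \fref{ent1} so that the entropy term cannot swamp the kinetic term.
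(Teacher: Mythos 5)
Your proposal is correct and follows essentially the same route as the paper: the upper bound $F_k\leq \calF[n_0]$ from \fref{freeE} combined with the identification $F_k=\calF[n_k]$, the absorption of the sublinear entropy lower bound \fref{ent1} into the kinetic term (the paper does this via Young's inequality, you solve the quadratic inequality in $\sqrt{a_k}$ — the same step), then Lemma \ref{lieb} for \fref{nH1} and the two-sided bound for \fref{Fk}. No gaps.
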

\begin{proof} First, we have from the decay of the free energy stated in \fref{freeE} that $F_k \leq F_0$, $\forall k \in \Nm$. Then, estimate \fref{ent1} and the Young inequality yield from the definition of $F(\varrho_k)$,
$$
C\Tr \big (\sqrt{H_0} \varrho_k \sqrt{H_0} \big)+\Tr\big (V_0 \varrho_k \big) \leq F_k \leq F_0.
$$
Since $V_0 \in L^\infty(\Omega)$, the trace term involving $V_0$ can be bounded by 
$$\|V_0\|_{L^\infty} \|\varrho_k \|_{\calJ_1}=\|V_0\|_{L^\infty} \|n_k \|_{L^1}=\|V_0\|_{L^\infty} \|n_0 \|_{L^1}$$
 thanks to the conservation of the $L^1$ norm of $n_k$ given in \fref{L1}. This proves \fref{rhoE} together with $\Tr(\varrho_k)=\|n_0 \|_{L^1}$. Estimate \fref{nH1} is a consequence of \fref{rhoE}, \fref{ninfty}, and \fref{gradnl2}. The bound \fref{Fk} is direct since following \fref{ent1} and \fref{rhoE},
$$
-C \leq -C \left(\Tr \big (\sqrt{H_0} \varrho_k \sqrt{H_0} \big) \right)^{1/2} - \|V_0\|_{L^\infty} \|n_0 \|_{L^1} \leq F_k \leq F_0.
$$
This ends the proof.
\end{proof}

The next proposition is crucial, and provides us with a uniform bound from below for the local density $n_k$.

\begin{proposition} \label{propbelow} (Bound from below for the density) Let $n_0 \in \Hp$, nonnegative. Then, for the $\Sigma[n_0]$ defined in \fref{closent}, there exists $\delta>0$, and $\underline{n}>0$ independent of $k$ and $\Delta t$ such that the condition
$$
\Sigma[n_0] \leq \delta
$$
implies
$$
n_{k}(x) \geq \underline{n}, \qquad \forall k \in \Nm, \qquad \forall x \in \overline{\Omega}.
$$

\end{proposition}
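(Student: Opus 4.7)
The plan is to show that if $\Sigma[n_0]$ is sufficiently small, then each density $n_k$ remains uniformly $L^\infty$-close to the global equilibrium density $n_\infty$, which is bounded below by $\underline{n}_\infty>0$ thanks to Theorem \ref{SP}. The argument is a three-step chain: propagate smallness of the relative entropy along the discrete scheme, convert it into Hilbert-Schmidt closeness of the operators via the Klein inequality, then convert that into $L^\infty$ closeness of the densities via the Lieb-Thirring inequality \fref{ninfty}.

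Step one: from the monotonicity \fref{relat} and the identity \fref{rela} of Lemma \ref{lemlip} (applicable because $\varrho_k=\exp(-(H+A_k))$ is precisely the minimizer $\varrho[n_k]$ of Theorem \ref{local}), we get $S(\varrho_k,\varrho_\infty)\leq \Sigma[n_k]\leq \Sigma[n_0]\leq \delta$ for every $k$. Step two: applying Lemma \ref{pinsker} with $\varrho_1=\varrho_k$, $\varrho_2=\varrho_\infty$ and using that $1+|\log\varrho_\infty|\geq I$ as a nonnegative operator (hence $\Tr((1+|\log\varrho_\infty|)X)\geq \Tr(X)$ for any nonnegative trace class $X$), I would obtain
\begin{equation*}
C\|\varrho_k-\varrho_\infty\|_{\calJ_2}^{2}\leq C\Tr\bigl((1+|\log\varrho_\infty|)(\varrho_k-\varrho_\infty)^{2}\bigr)\leq S(\varrho_k,\varrho_\infty)\leq \delta.
\end{equation*}
Step three: since $\varrho_k-\varrho_\infty$ is self-adjoint and belongs to $\calE$, the Lieb-Thirring estimate \fref{ninfty} yields
\begin{equation*}
\|n_k-n_\infty\|_{L^\infty(\Omega)}\leq C\,\|\varrho_k-\varrho_\infty\|_{\calJ_2}^{1/4}\,\|\varrho_k-\varrho_\infty\|_{\calE}^{3/4},
\end{equation*}
where the $\calE$-norm is controlled uniformly in $k$ and $\Delta t$ via the triangle inequality together with \fref{rhoE} of Lemma \ref{estimrho} for $\varrho_k$ and the fact that $\varrho_\infty$ is a fixed element of $\calE_+$. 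Combining these estimates gives $\|n_k-n_\infty\|_{L^\infty}\leq C\delta^{1/8}$, and choosing $\delta\leq (\underline{n}_\infty/(2C))^{8}$ forces $n_k(x)\geq \underline{n}_\infty-C\delta^{1/8}\geq \underline{n}_\infty/2=:\underline{n}$ uniformly.

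The main obstacle is the use of \fref{ninfty} on the signed operator $\varrho_k-\varrho_\infty$: this is legitimate since Lemma \ref{lieb} only requires self-adjointness, and the $\calE$-norm is defined through $|\varrho|$, so the triangle inequality together with \fref{rhoE} delivers a uniform bound. A secondary subtle point is that the proposition is meant to also produce the strict positivity $n_0>0$ from nonnegativity of $n_0$ alone. This is compatible with the argument above because $\Sigma[n_0]$ is defined through the quantum relative entropy of the minimizer $\varrho_0=\varrho[n_0]$, which exists for every nonnegative $n_0\in\Hp$ by Theorem \ref{local}; thus the same chain of inequalities can be run at $k=0$ and already yields $n_0\geq \underline{n}_\infty/2>0$, after which Theorem \ref{thsemi} provides the subsequent iterates $(n_k,A_k,V_k)$ and the uniform bound holds for all $k\in\Nm$.
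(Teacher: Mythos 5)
Your argument is correct and follows essentially the same route as the paper's proof: propagate $\Sigma[n_k]\leq\Sigma[n_0]$ via \fref{relat}, identify the first term of $\Sigma$ with $S(\varrho_k,\varrho_\infty)$ via \fref{rela}, pass to $\calJ_2$-closeness with the Klein inequality, then to $C^0$-closeness of the densities with \fref{ninfty} and the uniform $\calE$-bounds, and finally invoke $n_\infty\geq\underline{n}_\infty$ from Theorem \ref{SP}. Your treatment of the $k=0$ case (running the chain on the minimizer $\varrho[n_0]$ of Theorem \ref{local} before invoking Theorem \ref{thsemi}) also matches the paper's handling of that point.
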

\begin{proof} Suppose first that the lower bound is satisfied for $k=0$, so that the hypotheses of Theorem \ref{thsemi} on the initial condition $n_0$ hold. We treat the case $k=0$ at the end of the proof.  The key fact is then that the first term in the definition of $\Sigma[n_k]$ is the relative entropy between $\varrho_k$ and $\varrho_\infty$, that is, according to \fref{rela} and \fref{L1},
$$
\int_{\Omega} \left(n_k (A_k-A_\infty)+n_k-n_\infty \right) dx= S(\varrho_k,\varrho_\infty)=\Tr \big( \log (\varrho_k) (\varrho_k - \varrho_\infty) \big).
$$
The inequality of Lemma \ref{pinsker} then implies that, for all $k \in \Nm^*$,
$$
\|\varrho_k -\varrho_\infty\|_{\calJ_2}^2 \leq C S(\varrho_k,\varrho_\infty).
$$
Since $\Sigma[n_k] \leq \Sigma[n_0]$ for all $k$ according to \fref{relat}, we conclude from $\Sigma[n_0] \leq \delta$ that
%\be \label{j2}
$$
\|\varrho_k -\varrho_\infty\|_{\calJ_2}^2 \leq C \delta.
$$
%\ee 
Besides, \fref{ninfty} yields 
\be \label{Dn}
\| n_k-n_\infty\|_{C^0(\overline{\Omega})} \leq C \|\varrho_k -\varrho_\infty\|_{\calJ_2}^{1/4} \left(\|\varrho_k\|_{\calE}+\|\varrho_\infty\|_\calE \right)^{3/4},
\ee
which together with \fref{rhoE} and Theorem \ref{SP} for the fact that $\varrho_\infty \in \calE_+$, leads to 
$$
\| n_k-n_\infty\|_{C^0(\overline{\Omega})} \leq C \delta^{1/8}, \qquad \forall k \in \Nm^*.
$$
Since finally $n_\infty(x) \geq \underline{n}_\infty$ on $\overline{\Omega}$ according to Theorem \ref{SP}, this concludes the proof for $k \in \Nm^*$ by setting $\delta$ sufficiently small.

When $k=0$, a similar argument carries over: let $\varrho_0$ be the unique solution to the minimization problem of Theorem \ref{local} with constraint $n_0$. According to this latter theorem, $\varrho_0$ belongs to $\calE_+$. Then \fref{Dn} holds, and the lower bound is obtained as above. This ends the proof.
\end{proof}

We will need in addition the following bound on $A_k$.

\begin{corollary} \label{coro} We have the estimate 
$$
 \|A_k \|_{\Hmp} \leq C, \qquad \forall k \in \Nm,
$$
for a constant $C$ independent of $k$ and $\Delta t$.
\end{corollary}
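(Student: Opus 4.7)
The plan is to use the representation formula \fref{repform} of Theorem \ref{local} applied to the local minimizer $\varrho_k = \exp(-(H+A_k))$ with density $n_k$, namely
$$
A_k = -V_0 + \frac{1}{n_k}\!\left(\tfrac12 \Delta n_k + n[\nabla \varrho_k \nabla] - n[\varrho_k \log \varrho_k]\right),
$$
combined with the uniform bounds already at hand: $\|\varrho_k\|_\calE \leq C$ and $\|n_k\|_{H^1}\leq C$ from Lemma \ref{estimrho}, $n_k \geq \underline{n} > 0$ from Proposition \ref{propbelow}, and $\Tr|\beta(\varrho_k)|\leq C$ from \fref{ent2} in Lemma \ref{below}.

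The key trick is the following: given a test function $\phi \in \Hp$ with $\|\phi\|_{\Hp}=1$, I set $\psi = \phi/n_k$. Since $n_k \in H^1 \hookrightarrow L^\infty$ in one dimension and $n_k \geq \underline{n}$, both $1/n_k$ and its derivative $-\nabla n_k/n_k^2$ are controlled in $L^\infty$ and $L^2$ respectively, uniformly in $k$ and $\Delta t$. A direct computation using $\nabla \psi = \nabla \phi/n_k - \phi \nabla n_k/n_k^2$, together with the one-dimensional embedding $\|\phi\|_{L^\infty}\leq C\|\phi\|_{\Hp}$, then yields $\psi \in \Hp$ with
$$
\|\psi\|_{L^\infty} + \|\psi\|_{\Hp} \leq C \|\phi\|_{\Hp}.
$$
Here again the restriction to dimension one is essential.

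I then test the representation formula against $\phi$ and bound the four resulting terms. The first, $-(V_0, \phi)$, is trivially controlled since $V_0 \in L^\infty$. The second is $\frac12(\Delta n_k, \psi) = -\frac12(\nabla n_k, \nabla \psi)$, bounded by $\|n_k\|_{H^1}\|\psi\|_{\Hp}\leq C$. For the third term, I use the formula $n[\nabla \varrho_k \nabla] = -\sum_p \rho_p^k |\nabla \phi_p^k|^2$ from the paragraph following Theorem \ref{local}, so that
$$
\Big|\int \psi\, n[\nabla \varrho_k \nabla]\,dx\Big| \leq \|\psi\|_{L^\infty} \sum_p \rho_p^k \|\nabla \phi_p^k\|_{L^2}^2 = \|\psi\|_{L^\infty} \,\Tr(\sqrt{H_0}\varrho_k \sqrt{H_0}) \leq C.
$$
For the fourth term, using $n[\varrho_k\log\varrho_k] = \sum_p(\rho_p^k\log\rho_p^k)|\phi_p^k|^2$ and the triangle inequality $\sum_p|\rho_p^k\log\rho_p^k|\leq \Tr|\beta(\varrho_k)| + \Tr \varrho_k$, which is bounded by \fref{ent2} and the conservation of $L^1$ mass \fref{L1}, I obtain
$$
\Big|\int \psi\, n[\varrho_k \log \varrho_k]\,dx\Big| \leq \|\psi\|_{L^\infty} \sum_p |\rho_p^k\log \rho_p^k| \leq C \|\phi\|_{\Hp}.
$$
Summing the four estimates yields $|\langle A_k,\phi\rangle|\leq C\|\phi\|_{\Hp}$, which is the desired bound in $\Hmp$.

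The main obstacle — or rather the point that has to be isolated — is the admissibility of $\psi = \phi/n_k$ as an $\Hp$ test function with a norm controlled uniformly in $k$; this is precisely where the strict positivity of $n_k$ given by Proposition \ref{propbelow} and the one-dimensional Sobolev embedding play their role. Once this is granted, the bound reduces to combining the representation formula with the uniform estimates of Lemmas \ref{estimrho} and \ref{below}.
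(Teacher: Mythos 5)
Your proposal is correct and follows essentially the same route as the paper: the paper also starts from the representation formula \fref{repform}, bounds $(n[\nabla\varrho_k\nabla]-n[\varrho_k\log\varrho_k])/n_k$ in $L^1$ (hence in $\Hmp$ via the one-dimensional embedding $\Hp\subset L^\infty$) using the lower bound $n_k\geq \underline{n}$, the trace bounds of Lemma \ref{estimrho} and \fref{ent2}, and controls $\Delta n_k/n_k$ in $\Hmp$ by exactly the integration-by-parts against $\phi/n_k$ that you spell out. Your duality formulation with $\psi=\phi/n_k$ is just a more explicit write-up of the same argument.
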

\begin{proof} The estimate is a consequence of the representation formula \fref{repform}: we have first, thanks to Proposition \ref{propbelow}, Lemma \ref{below}  and \fref{rhoE},
$$
\left\|\frac{n[\nabla \varrho_k \nabla] -n[\varrho_k \log \varrho_k] }{n_k} \right\|_{L^1} \leq \frac{1}{\underline{n}} \left(\Tr \big( \sqrt{H_0} \varrho_k \sqrt{H_0}\big)+\Tr \big( |\beta(\varrho_k)|\big)+\Tr \big( \varrho_k\big) \right) \leq C.
$$
On the other hand,
$$
\left\|\frac{\Delta n_k}{n_k} \right\|_{\Hmp} \leq \frac{1}{\underline{n}} \|n_k\|_{H^1}+\frac{1}{\underline{n}^2} \|n_k\|^2_{H^1} \leq C,
$$
by Proposition \ref{propbelow} and \fref{rhoE}. This, together with $V_0 \in L^\infty(\Omega)$, concludes the proof.
\end{proof}
\medskip

The next step of the proof is to define approximations of $(n,A,V)$ from $(n_k,A_k,V_k)_{k \in \Nm}$ and to pass to the limit.

\subsection{Passing to the limit}  Let $T>0$, and for $N \geq 1$, set $\Delta t= T/N$ in \fref{semiqdd}. We define $\hat n_{N}(t,x)$, $\hat A_N(t,x)$ and $\hat V_{N}(t,x)$ as the following piecewise constant functions, for $(t,x) \in (0,T) \times \Omega$:
$$
\left.
\begin{array}{l}
\ds  \hat n_{N}(t,x)= n_{k+1}(x)\\
\ds  \hat A_{N}(t,x)= A_{k+1}(x)\\
\ds  \hat V_{N}(t,x)= V_{k+1}(x)
\end{array}
\right\}
 \qquad \textrm{when } t \in (k \Delta t, (k+1) \Delta t], \qquad k=0,\cdots,N-1.
$$
We define in the same way the operator $\hat \varrho_N(t)$. From the semi-discretized equation \fref{semiqdd}, we deduce that the functions $\hat n_N$, $\hat V_N$ and $\hat A_N$ satisfy, for all $\varphi \in C^1([0,T],\Hp)$, with $\varphi=0$ for $t \geq T$,
\begin{align*} 
&\frac{1}{\Delta t}\int_0^T \big(\hat n_{N}(t)-\hat n_{N}(t-\Delta t),\varphi(t) \big) dt \\ \nonumber
&\hspace{2cm}=\int_0^T \big (\hat n_{N}(t-\Delta t) \nabla (\hat A_{N}(t)-\hat V_{N}(t)), \nabla \varphi(t) \big) dt.
\end{align*}
We recast the left-hand side as 
\begin{align} \label{disPDE}
&\frac{1}{\Delta t}\int_0^T \big(\hat n_{N}(t)-\hat n_{N}(t-\Delta t),\varphi(t) \big) dt \\ \nonumber
&\hspace{2cm}=\frac{1}{\Delta t}\int_0^{T-\Delta t } \big(\hat n_{N}(t),\varphi(t)- \varphi(t+\Delta t) \big) dt\\ \nonumber
&\hspace{2cm}\qquad +\frac{1}{\Delta t}\int_{T-\Delta t}^{T} \big(\hat n_{N}(t),\varphi(t))dt-\frac{1}{\Delta t}\int_{-\Delta t}^{0} \big(\hat n_{N}(t),\varphi(t+\Delta t))dt\\[3mm]
&\hspace{2cm}:=T^N_1+T^N_2+T^N_3. \nonumber
\end{align}

In the last term above, we include the initial condition by replacing $\hat n_{N}(t)$ by $n_0$ for $t \in (-\Delta t,0)$. The first step is to pass to the limit in \fref{disPDE} in order to recover the weak formulation \fref{weak1}. We will need for this the estimates given the next proposition.
\begin{proposition} The following uniform bounds hold:
\begin{align}
\label{bound1}&\| \hat n_N \|_{L^\infty(0,T,H^1)} \leq C\\
\label{bound3}&\underline{n} \leq \hat n_N(t,x), \qquad \forall (t,x)\in (0,T)\times \overline{\Omega}\\
\label{bound5}& \| \hat V_{N} \|_{L^\infty(0,T,H^2)} \leq C\\
\label{bound4b}& \| (\hat A_{N})_\Omega \|_{L^2(0,T)} \leq C\\
\label{bound4}& \| \hat A_{N} \|_{L^2(0,T,H^1)} \leq C\\
\label{bound7}& \| \hat \varrho_N \|_{L^\infty(0,T,\calE)} \leq C\\
\label{bound6}& \| H_0\, \hat \varrho_{N} \, H_0 \|_{L^2(0,T,\calJ_1)} \leq C.
\end{align}
Above, $(\hat A_{N})_\Omega$ denotes the average of $\hat A_N$ over $\Omega$, i.e. $(\hat A_{N})_\Omega=\int_0^1 \hat A_{N}(x)dx$. Moreover, for any $h \in (-1,1)$, and for $\tau_h \hat n_N(t,x):=\hat n_N(t+h,x)$, with extension $\hat n_N(t,x)=0$ if $t \notin (0,T]$, we have
\be
\label{bound2}\| \tau_h \hat n_{N}-\hat n_N \|_{L^1(0,T,\Hmp)} \leq C \sqrt{h}.
\ee
\end{proposition}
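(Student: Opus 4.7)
Most of the seven pointwise-in-time bounds follow directly from the uniform estimates already established for the semi-discrete solution. Bounds \fref{bound1} and \fref{bound7} are simply the piecewise-constant recast of \fref{nH1} and \fref{rhoE} in Lemma \ref{estimrho}. Bound \fref{bound3} is a direct restatement of Proposition \ref{propbelow}. For \fref{bound5}, standard $H^2$ elliptic regularity applied to $-\Delta V_k = n_k$ with zero Dirichlet data gives $\|V_k\|_{H^2} \leq C\|n_k\|_{L^2}$, and \fref{nH1} closes the estimate.

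For the chemical potential, I would first obtain \fref{bound4b} by pairing $A_k$ with the constant function $1 \in \Hp$, which gives $|(A_k)_\Omega| \leq \|A_k\|_{\Hmp}\,\|1\|_{\Hp}$, uniformly bounded by Corollary \ref{coro}. For \fref{bound4}, I would exploit the discrete dissipation inequality \fref{freeE} of Theorem \ref{thsemi}: the boundedness of $\calF[n_k]$ granted by \fref{Fk} yields a uniform bound on $\Delta t \sum_j \int n_j |\nabla(A_{j+1} - V_{j+1})|^2 dx$, and the pointwise lower bound $n_j \geq \underline n$ of \fref{bound3} reduces this to an $L^2(0,T;L^2)$ bound on $\nabla(\hat A_N - \hat V_N)$. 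A triangle inequality with \fref{bound5}, combined with the Poincaré--Wirtinger inequality on the zero-mean part of $\hat A_N$ and the average control \fref{bound4b}, then closes \fref{bound4}.

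The main obstacle is the square-integrability of $H_0 \hat\varrho_N H_0$ in $\calJ_1$. Applying Lemma \ref{regmin} with $V = V_0 + A_k$ together with \fref{bound7} gives only $\|H_0 \varrho_k H_0\|_{\calJ_1} \leq C(1 + \|A_k\|_{L^2}^2)$; squaring and integrating would require $L^4(0,T;L^2)$ control on $A_k$, which does not follow from \fref{bound4}. The key trick is the Parseval interpolation
$$
\|u\|_{L^2}^2 \leq \|u\|_{H^1}\,\|u\|_{\Hmp},
$$
valid for periodic functions by a one-line Cauchy--Schwarz in Fourier space. Combined with the uniform $\Hmp$-bound of Corollary \ref{coro}, it yields $\|A_k\|_{L^2}^2 \leq C\|A_k\|_{H^1}$, hence $\|H_0\varrho_k H_0\|_{\calJ_1}^2 \leq C(1 + \|A_k\|_{H^1}^2)$, which is integrable in $t$ thanks to \fref{bound4}.

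For the time-translation estimate \fref{bound2}, I would rewrite the semi-discrete equation \fref{semiqdd} as $n_{k+1} - n_k = -\Delta t\, \nabla(n_k \nabla(A_{k+1} - V_{k+1}))$. Continuity of the divergence $\nabla : L^2 \to \Hmp$ then gives $\|n_{k+1} - n_k\|_{\Hmp} \leq \Delta t\,\|n_k\nabla(A_{k+1}-V_{k+1})\|_{L^2}$. For $h = m\Delta t > 0$, telescoping $n_{k+m} - n_k$ and exchanging the order of summation (each index $j$ is counted at most $m$ times) produces
$$
\int_0^T \|\tau_h \hat n_N - \hat n_N\|_{\Hmp}\,dt \leq h \cdot \Delta t \sum_j \|n_j \nabla(A_{j+1} - V_{j+1})\|_{L^2}.
$$
A Cauchy--Schwarz in the discrete sum, combined with $\|n_j\|_{L^\infty} \leq C$ from \fref{bound1} and the dissipation bound $\Delta t\sum_j \int n_j|\nabla(A_{j+1} - V_{j+1})|^2 \leq C$, yields a bound of order $h$, which is stronger than $\sqrt{h}$ for $|h| \leq 1$. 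Non-integer shifts and the case $h < 0$ are handled by symmetric single-step adjustments.
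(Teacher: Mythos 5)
Your proposal is correct and, for six of the eight bounds, follows the paper's argument verbatim: \fref{bound1}, \fref{bound3}, \fref{bound7} from Lemma \ref{estimrho} and Proposition \ref{propbelow}; \fref{bound5} by elliptic regularity; \fref{bound4} from the dissipation in \fref{freeE}, the lower bound, and Poincar\'e--Wirtinger; and \fref{bound6} via exactly the interpolation $\|A_k\|_{L^2}^2\leq \|A_k\|_{H^1}\|A_k\|_{\Hmp}$ combined with Corollary \ref{coro} and \fref{estHrH} (the paper phrases this as an $L^4(0,T,L^2)$ bound on $\hat A_N$, but it is the same computation).

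You deviate from the paper in two places, both legitimately. For \fref{bound4b}, the paper goes through the free-energy identity $F_k=-(A_k)_\Omega\|n_k\|_{L^1}-\int_\Omega n_k(A_k-(A_k)_\Omega+1)dx+\frac12\|\nabla V_k\|^2_{L^2}$, divides by $\|n_k\|_{L^1}$, and controls the result by $C+C\|\sqrt{n_{k-1}}\nabla A_k\|_{L^2}$, which is only square-summable in time; your observation that $|(A_k)_\Omega|=|\langle A_k,1\rangle_{\Hmp,\Hp}|\leq \|A_k\|_{\Hmp}\|1\|_{\Hp}\leq C$ by Corollary \ref{coro} is simpler and even gives an $L^\infty(0,T)$ bound, which is more than enough. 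For \fref{bound2}, the paper establishes the pointwise-in-time estimate $\|\hat n_N(t+h)-\hat n_N(t)\|_{\Hmp}\leq C\sqrt h$ for a.e.\ $t$ (the Cauchy--Schwarz there puts the factor $\sqrt{k_2\Delta t}\leq\sqrt h$ on a single telescoped sum), whereas you integrate in time first, exchange the order of summation, and apply Cauchy--Schwarz to the full discrete sum; this yields the stronger rate $O(h)$ for the $L^1(0,T,\Hmp)$ norm. Your treatment of non-integer shifts is compressed to one sentence, but the integrated version is actually more forgiving here than the paper's pointwise one (which must exclude the null set of grid times): splitting each interval $(k\Delta t,(k+1)\Delta t]$ into the portion of measure $\Delta t-r$ where the shift crosses $m$ grid points and the portion of measure $r$ where it crosses $m+1$, both contributions are bounded by $Ch$ after the same exchange of summation. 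Both routes deliver \fref{bound2}.
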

\begin{proof} Estimate \fref{bound1} follows from \fref{nH1} and
$$
\| \hat n_{N} \|_{L^\infty(0,T,H^1)} \leq \sup_{k \in \Nm} \| n_k \|_{H^1}.
$$
The bound from below \fref{bound3} is a consequence of Proposition \ref{propbelow}.
Estimate \fref{bound5} follows from \fref{bound1}, the Poisson equation \fref{poisson2}, and elliptic regularity. Estimate \fref{bound4b} is obtained by noticing that
$$
F_k=-(A_k)_\Omega \int_\Omega n_k dx -\int_\Omega n_k \big(A_k-(A_k)_\Omega+1\big) dx+\frac{1}{2} \int_{\Omega} \left|\nabla V_k \right|^2 dx.
$$
Dividing by $\|n_k\|_{L^1}$, the latter equation, together with \fref{L1} and \fref{Fk}, shows that
\bee
|(A_k)_\Omega | &\leq & C \| \nabla V_k \|_{L^2}^2+C \|A_k-(A_k)_\Omega\|_{L^\infty}+C\\
&\leq & C+C\|\nabla A_k \|_{L^2}\\
&\leq & C+C\| \sqrt{n_{k-1}}\nabla A_k \|_{L^2}.
\eee
In the second line above, we used the fact that $V_k$ is uniformly bounded in $H^1(\Omega)$ thanks to the Poisson equation and \fref{rhoE}, together with the fact that $H^1(\Omega) \subset L^\infty(\Omega)$ and the Poincar\'e-Wirtinger inequality in order to control $A_k-(A_k)_\Omega$ in $L^2(\Omega)$ by its gradient in $L^2(\Omega)$. In the last line, we used the lower bound of Proposition \ref{propbelow}. Then, 
\bee
\|(\hat A_N)_\Omega\|^2_{L^2(0,T)} &\leq& C+C \Delta t \sum_{j=0}^{N-1} \int_\Omega n_{j} |\nabla (A_{j+1}-V_{j+1})|^2 dx\\
&&\qquad +C\|\hat n_N\|_{L^1(0,T,L^\infty)} \|\nabla \hat V_N\|_{L^\infty(0,T,L^2)} \\
& \leq& C,
\eee
thanks to \fref{freeE}, \fref{bound1}, and \fref{bound5} for controlling the Poisson potential. Estimate \fref{bound4} follows then from the bound \fref{freeE} on the free energy, the lower bound \fref{bound3}, the bound \fref{bound5} for the Poisson potential, and a combination of \fref{bound4b} and the Poincar\'e-Wirtinger inequality. Estimate \fref{bound7} is a consequence of \fref{rhoE}. Regarding \fref{bound6}, we remark first that $\| \hat A_N\|_{L^\infty(0,T,\Hmp)} \leq C$ according to Corollary \ref{coro}. Together with \fref{bound4} and standard interpolation, we can conclude that $\| \hat A_N\|_{L^4(0,T,L^2)} \leq C$. The result then follows from \fref{estHrH} and \fref{rhoE}.

We turn now to estimate \fref{bound2}. Let $t \in (0,T)$ with $t \notin U:=\{ k T /N, \; N \in \Nm^*, k=0, \cdots, N\}$, and let first $h \in [0,1)$. Write then $t= k_1 \Delta t +r_1$ and $h=k_2 \Delta t +r_2$, where $k_1$ and $k_2$ are integers, and where $r_1 \in (0,\Delta t)$, $r_2 \in [0, \Delta t)$. When $r_1+r_2 \leq \Delta t$, we have, for any $\varphi \in C^1([0,T],\Hp)$, 
$$
I_N(t,h)(\varphi):=(\hat n_{N}(t+h)-\hat n_N(t),\varphi)=(n_{k_1+1+k_2}-n_{k_1+1},\varphi),
$$
with $n_{k}=0$ for $k>N$, while when $r_1+r_2 > \Delta t$, we have
$$
 I_N(t,h)(\varphi)=(n_{k_1+2+k_2}-n_{k_1+1},\varphi).
$$
Let us start with the case $r_1+r_2 \leq \Delta t$. Using \fref{semiqdd}, we can recast $I_N$ as the telescopic sum (below, $a \wedge b=\min(a,b)$),
\bee
I_N(t,h)(\varphi)&=&\sum_{p=k_1+1}^{(k_1+k_2) \wedge N}(n_{p+1}-n_{p},\varphi)=\Delta t \sum_{p=k_1+1}^{(k_1+k_2)\wedge N}(n_{p} \nabla (A_{p+1}-V_{p+1}),\nabla \varphi).
\eee
Hence, the Cauchy-Schwarz inequality leads to
\bee
|I_N(t,h)(\varphi)| &\leq&   \sqrt{ k_2 \Delta t} \, \|\nabla \varphi \|_{L^2} \sup_{p=1,\cdots,N}\|\sqrt{n_p}\|_{L^\infty} \\
&&\times \left(\Delta t\sum_{p=k_1+1}^{(k_1+k_2)\wedge N}\| \sqrt{n_p} \nabla (A_{p+1}-V_{p+1})\|^2_{L^2}\right)^{1/2}.
\eee
Together with \fref{freeE} and \fref{nH1}, this yields by duality, since $k_2 \Delta t \leq h$,
$$
\| \hat n_{N}(t+h)-\hat n_N(t) \|_{\Hmp} \leq C \sqrt{ h}, \qquad \forall N \in \Nm^*,
\qquad \forall h \in [0,1),$$
which holds for all $t \notin U$. Note that the latter inequality cannot hold for $t \in U$ since when $t=\frac{k T}{N}$ for instance, then $\hat n_N(t)=n_{k}$ while $\hat n_N(t+h)=n_{k+1}$ for $h \in (0,\Delta t)$. We find the same estimate when $r_1+r_2 > \Delta t$. Since $U$ is countable and therefore of Lebesgue measure zero, we obtain \fref{bound2} when $h \geq 0$. The case $h <0$ follows similarly. This ends the proof of the proposition.
\end{proof}

\subsubsection{Passing to the limit in the weak formulation \fref{disPDE}} \label{lim1} We have now all the required estimates to obtain \fref{weak1}.

\paragraph{Compactness.} For a finite constant $C>0$ and $|h|<1$, $h\neq 0$, let $S$ be the set defined by 
$$
 S=\big\{ u \in L^\infty(0,T,L^2(\Omega)) , \quad \| u \|_{L^\infty(0,T,H^1)} +h^{-1/2} \| \tau_h u \|_{L^1(0,T,\Hmp)} \leq C\big\},
$$
where $\tau_h u(t):=u(t+h)$.
Then $S$ is relatively compact in $L^2(0,T,L^2(\Omega))$ as an application of the Riesz-Fr\'echet-Kolmogorov criterion: Indeed, for $(h_t,h_x) \in (-1,1) \times (-1,1)$, let $\calT_{h_x,h_t}u(t,x):=u(t+h_t,x+h_x)$, where $u$ is extended by zero when $t+h_t \notin (0,T)$. Then,
\be \label{Th}
\| \calT_{h_t,h_x}u-u\|_{L^2(\Omega)} \leq \| \calT_{h_t,h_x}u-\calT_{h_t,0}u\|_{L^2(\Omega)}+\| \calT_{h_t,0}u-u\|_{L^2(\Omega)}.
\ee
For the last term, we write
\bee
\| \calT_{h_t,0}u-u\|^2_{L^2(\Omega)}&=& \left( \calT_{h_t,0}u-u, \calT_{h_t,0}u -u \right) \leq \|\calT_{h_t,0}u-u\|_{\Hmp} \|\calT_{h_t,0}u-u\|_{H^1}\\
&\leq& 2 \|u\|_{L^\infty(0,T,H^1)} \|\calT_{h_t,0}u-u\|_{\Hmp}.
\eee
Integrating in time and using the bounds given in the definition of $S$, this yields
\be \label{kol}
 \| \calT_{h_t,0}u-u\|_{L^2(0,T,L^2)} \leq C |h_t|^{1/4}, \qquad \forall u \in S.
\ee
The remaining term in \fref{Th} is standard owing to the $H^1$ regularity in the spatial variable and we find
$$
\| \calT_{h_t,h_x}u- \calT_{h_t,0}u\|_{L^2(0,T,L^2)} \leq C |h_x|^{1/2}, \qquad \forall u \in S.
$$
This shows the relative compactness of $S$ in $L^2(0,T,L^2(\Omega))$. 

Now, according to \fref{bound1} and \fref{bound2}, the sequence $(\hat n_{N})_{N \in \Nm^*}$ belongs to $S$ for an appropriate $C$. There exists therefore $n \in L^2(0,T,L^2(\Omega))$, and a subsequence (still denoted by $(\hat n_{N})_{N \in \Nm^*}$; this abuse of notation will consistently be done with any subsequences), such that $\hat n_{N} \to n$ strongly in $n \in L^2(0,T,L^2(\Omega))$. The bound \fref{bound1} implies moreover that $n \in L^\infty(0,T,\Hp)$, and \fref{kol} with $u \equiv \hat n_N$ shows that $\hat n_N$ and $\tau_{\Delta t} \hat n_N$ have the same strong limit. Furthermore, we conclude from \fref{bound4} that, along subsequences, $\hat A_{N} \to A$ weakly in $L^2(0,T,\Hp)$ for some $A \in L^2(0,T,\Hp)$, and from \fref{bound5} that $\hat V_{N} \to V$ weakly-$*$ in $L^\infty(0,T,H^{1}_0(\Omega))$ for some $V \in L^2(0,T,H^{1}_0(\Omega))$. We can now pass to the limit in \fref{semiqdd}

\paragraph{The limit.} According to what we have found above, we have, for all $\varphi \in C^2([0,T],\Hp)$, with $\varphi=0$ for $t \geq T$,
$$
\lim_{N \to \infty} \int_0^T \big (\hat n_{N}(t-\Delta t) \nabla (\hat A_{N}(t)-\hat V_{N}(t)), \nabla \varphi(t) \big) = \int_0^T \big ( n (t) \nabla ( A(t)- V(t)), \nabla \varphi(t) \big).
$$
It remains to treat the terms $T_1^N$, $T_2^N$ and $T_3^N$ in \fref{disPDE}. We have for $T_1^N$, and some $t_0(t) \in (t,t+\Delta t)$:
\bee
T_1^N &=& - \int_0^{T-\Delta t} (\hat n_N(t), \partial_t \varphi(t))dt- \frac{\Delta t}{2}\int_0^{T-\Delta t} (\hat n_N(t), \partial^2_{tt} \varphi(t_0(t)))dt.
\eee
The first term converges to
$$
- \int_0^{T} ( n(t), \partial_t \varphi(t))dt
$$
since $\hat n_N$ converges to $n$ strongly in  $L^2(0,T,L^2(\Omega))$ as mentioned before, while the second one converges to zero thanks to \fref{bound1}. For $T_2^N$, we use the fact that $\varphi(t)=\partial_t \varphi(t)=0$ for $t \geq T$, which leads to, for some $t_1(t) \in (t,T)$,
\bee
T_2^N &=& \frac{1}{\Delta t}\int_{T-\Delta t}^T (\hat n_N(t), \varphi(t)-\varphi(T))dt\\
&=&\frac{1}{2 \Delta t} \int_{T-\Delta t}^T (t-T)^2(\hat n_N(t), \partial_{tt} \varphi(t_1(t))dt.
\eee
The last term can be controlled by
$$
C \Delta t \|\hat n_N \|_{L^\infty(0,T,L^2)}, 
$$
and therefore goes to zero as $N \to \infty$. The term $T_3^N$ is straightforward and yields 
$$
\lim_{N \to \infty} T_3^N= -(n_0,\varphi(0)).
$$
We therefore recover the weak formulation \fref{weak1}. The lower bound on the density is obtained as follows: from the strong convergence of $\hat n_N$ in $L^2(0,T,L^2(\Omega))$, we deduce that there exists a subsequence such that $\hat n_N \to n$ almost everywhere in $(0,T) \times \Omega$. Passing to the limit in \fref{bound3} leads to  
%\be \label{belowlim}
$$
\underline{n} \leq n, \qquad a.e. \quad
 (0,T) \times \Omega.
$$
Finally, the fact that $\partial_t n \in L^2(0,T,\Hmp)$ follows directly by duality since, for any $\varphi$ smooth supported in $(0,T)$,
\bee
\left| \int_0^T (n,\partial_t \varphi) dt\right| &\leq& \left| \int_0^T (n \nabla (A-V),\nabla \varphi) dt\right| \\
&\leq& \|n\|_{L^\infty(0,T,L^\infty)} \|\nabla (A-V)\|_{L^2(0,T,L^2)} \|\nabla \varphi\|_{L^2(0,T,L^2)}\\
&\leq& C \|\nabla \varphi\|_{L^2(0,T,L^2)}.
\eee
\medskip

We turn now to the derivation of the relation between the limiting $n$ and $A$, which is the most delicate part of the proof.
\subsubsection{Passing to the limit in the closure relation \fref{den2}} \label{lim2}

The proof requires more work than the previous direct limit. The starting point is to consider  $\hat \varrho_{N}=\exp(-(H+\hat A_{N}))$, which is the solution to the local minimization problem with constraint $\hat n_N$. For $A$ and $n$ the limits of $\hat A_N$ and $\hat n_N$ obtained before, the goal is to show that $\hat \varrho_N$ converges to $\exp(-(H+A))$, where $n[\exp(-(H+A))]=n$. The main difficulty is that we only have weak convergence of $\hat A_{N}$ with respect to the time variable. With at least almost sure convergence in time and strong convergence in space, it would be direct to conclude from classical perturbation theory that the expected limit holds. Here, we need to proceed differently, and the key ingredients are the representation formula \fref{repform} and the stability estimate \fref{rel}. The latter allows us (i) to show that the limit of $\hat \varrho_N$ is of the form $\exp(-(H+A_{eq}))$ and (ii) to transfer the strong convergence in time of $\hat n_N$ to $\hat \varrho_{N}$, while the former allows to conclude that $A_{eq}=A$.

The first step is to obtain more compactness results.

\paragraph{Step 1: more compactness.} We deduce first from \fref{L1} that 
$$
\|\hat \varrho_{N}\|_{L^\infty(0,T,\calJ_1)}=\|\Tr \big( \hat \varrho_{N} \big)\|_{L^\infty(0,T)}=\|\hat n_{N}\|_{L^\infty(0,T,L^1)}=\|n_{0}\|_{L^1}.
$$
We can therefore extract a subsequence such that $\hat \varrho_{N} \to \varrho $ weakly-$*$ in $L^\infty(0,T,\calJ_1)$. In the same way, we conclude from \fref{bound7} that $\sqrt{H_0} \hat \varrho_{N} \sqrt{H_0} \to \sqrt{H_0} \varrho \sqrt{H_0} $ weakly-$*$ in $L^\infty(0,T,\calJ_1)$, and  from \fref{bound6} that $H_0 \hat \varrho_{N} H_0 \to H_0 \varrho H_0 $ weakly-$*$ in $L^2(0,T,\calJ_1)$. As claimed in Theorem \ref{mainth}, if we assume temporarily that $\varrho=\exp(-(H+A))$, this shows in particular that $\varrho \in L^\infty(0,T,\calE_+)$ and that $H_0 \varrho H_0 \in L^2(0,T,\calJ_1)$.

Consider then the limit $n$ of $\hat n_N$, which  belongs to $L^\infty(0,T,\Hp)$, and verifies $\underline{n} \leq n$, a.e. in  $(0,T) \times \Omega$. According to Theorem \ref{local}, the free energy $F(u)$ admits a unique minimizer under the local density constraint $n[u]=n(t)$,  $t$ almost everywhere. We denote by $\varrho_{eq}[n(t)] \in \calE_+$ the solution, and by $A_{eq}(t) \in \Hmp$ the corresponding Lagrange multiplier. We want to show that $A_{eq}=A$. A step towards this is to show that $\varrho_{eq}[n]=\varrho$, which is a consequence of the following lemma, proved at the end of the section.

\begin{lemma} \label{lemL2} The chemical potential $A_{eq}$ belongs to $L^2(0,T,L^2(\Omega))$.
\end{lemma}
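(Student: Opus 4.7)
The goal is to upgrade the regularity of $A_{eq}(t)$, which is known only to lie in $\Hmp$ for a.e.\ $t$ by Theorem~\ref{local}, to square-integrability in both variables. My plan is to identify $A_{eq}$ with the weak limit of the semi-discrete potentials $\hat A_N$, whose uniform boundedness in $L^2(0,T,\Hp)$ was established in \fref{bound4}. By reflexivity, one can extract a subsequence with $\hat A_N \rightharpoonup A^\sharp$ weakly in $L^2(0,T,\Hp)\subset L^2(0,T,L^2(\Omega))$. Since weak limits inherit the norm bound, it suffices to show that $A^\sharp=A_{eq}[n]$ almost everywhere in time.

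For the identification, I would work at the level of operators rather than directly on the potentials. The bound \fref{bound7} provides, along a subsequence, weak-$*$ convergence $\hat\varrho_N \rightharpoonup \tilde\varrho$ in $L^\infty(0,T,\calJ_1)$, and similarly for $\sqrt{H_0}\hat\varrho_N\sqrt{H_0}$ in light of the $\calE$ bound. Combined with the strong $L^2(0,T,L^2)$ convergence of $\hat n_N$ to $n$ (from which one may extract a further subsequence so that $\hat n_N(t)\to n(t)$ in $L^2(\Omega)$ for a.e.\ $t$, with uniform lower bound $\underline{n}$), I would pass to the limit pointwise in $t$ in the minimization characterization that each $\hat\varrho_N(t)$ is the unique minimizer of $F$ under the constraint $n[\cdot]=\hat n_N(t)$. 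Lower semi-continuity of $F$ with respect to the relevant weak-$*$ topology on $\calE_+$, together with the continuity of $\varrho \mapsto n[\varrho]$ given by the weak formulation, yields that $\tilde\varrho(t)$ minimizes $F$ under $n[\cdot]=n(t)$. Uniqueness in Theorem~\ref{local} then forces $\tilde\varrho(t)=\varrho_{eq}[n(t)]$, and the representation formula \fref{repform} allows one to identify $A^\sharp(t)=A_{eq}[n(t)]$ a.e.\ in $t$.

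The main obstacle is the circularity one would face by trying to proceed more directly. The stability estimate \fref{rel} is the most natural tool for comparing two local equilibria, but it requires the associated potentials to lie in $L^2(\Omega)$, which is precisely what is being proved here; this is why the argument must go through the minimization characterization. A secondary technical hurdle is the passage to the limit in the nonlinear trace-density terms $n[\nabla \hat\varrho_N \nabla]$ and $n[\hat\varrho_N \log \hat\varrho_N]$ that appear on the right-hand side of \fref{repform}: I would handle these via the spectral convergence of eigenvalues and eigenfunctions provided by Lemma~\ref{lipspec} applied pointwise in $t$, combined with the uniform $\calE$-bound \fref{bound7} and the entropy estimate \fref{ent2} to justify a dominated-type passage to the limit in the spectral sums.
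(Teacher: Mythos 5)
Your approach is genuinely different from the paper's, and it contains a gap that makes it essentially circular. The paper proves this lemma without ever identifying $A_{eq}$ as a limit of the $\hat A_N$: it first shows by duality that $\Delta \hat n_N$ is bounded in $L^2(0,T,L^2(\Omega))$ (writing $\int_0^T((\Delta-\II)\hat n_N,\varphi)\,dt = \int_0^T \Tr(\hat\varrho_N(\Delta\varphi-\varphi))\,dt$ and using the uniform bound \fref{bound6} on $H_0\hat\varrho_N H_0$ in $L^2(0,T,\calJ_1)$), so that $\Delta n \in L^2(0,T,L^2(\Omega))$; it then invokes a quantitative estimate (an adaptation of \cite{MP-JSP}, Theorem 3) bounding $\|A_{eq}(t)\|_{L^2}$ pointwise in $t$ by an expression involving only $\|\Delta n(t)\|_{L^2}$, $\|\sqrt{n(t)}\|_{H^1}$, $\|n(t)\|_{L^1}$ and $\underline{n}$, all of which are under control. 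This works entirely at the level of the fixed-time minimization problem for the limiting density $n(t)$ and requires no limit identification whatsoever.

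The gap in your argument is the step where you ``pass to the limit pointwise in $t$ in the minimization characterization.'' The only convergence of $\hat\varrho_N$ available at this stage is weak-$*$ in $L^\infty(0,T,\calJ_1)$ (and the analogous statements for $\sqrt{H_0}\hat\varrho_N\sqrt{H_0}$ and $H_0\hat\varrho_N H_0$); this gives no pointwise-in-$t$ convergence of $\hat\varrho_N(t)$ in any operator topology, so neither your lower-semicontinuity argument (which also ignores that the constraint $n[\cdot]=\hat n_N(t)$ varies with $N$ and would require a recovery-sequence construction) nor your invocation of Lemma \ref{lipspec} ``pointwise in $t$'' can be carried out. This is precisely the obstruction the paper flags at the start of Section \ref{lim2}: a.e.-in-$t$ strong convergence of $\hat\varrho_N(t)$ is only obtained \emph{after} this lemma, as a consequence of the stability estimate \fref{strongL2}, which requires $A_{eq}\in L^2(0,T,L^2)$ as an input. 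Likewise, your final identification $A^\sharp=A_{eq}$ via the representation formula is exactly Step 2 of the paper's argument, which again relies on the strong convergence obtained from this lemma. In short, you are attempting to prove the conclusion of Section \ref{lim2} inside the proof of the lemma whose whole purpose is to unlock Section \ref{lim2}; the direct quantitative route through $\Delta n\in L^2(0,T,L^2(\Omega))$ and the $L^2$ bound on the chemical potential is what breaks this circle.
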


The latter lemma allows us to use the stability estimate \fref{rel} together with Lemma \ref{pinsker} to conclude that, $t$ a.e.,
$$
\Tr \big( \varrho_{eq}[n(t)]-\hat \varrho_N(t)  \big)^2 \leq C (A_{eq}(t)- \hat A_N(t),\hat n_N(t)-n(t)).
$$
Integrating in time, we find
\be \label{strongL2}
\| \varrho_{eq}[n]-\hat \varrho_N \|^2_{L^2(0,T,\calJ_2)} \leq \big( \|A_{eq}\|_{L^2(0,T,L^2)}+ \|\hat A_N\|_{L^2(0,T,L^2)} \big) \|\hat n_N-n\|_{L^2(0,T,L^2)}.
\ee
The strong convergence of $\hat n_N$ to $n$ in $L^2(0,T,L^2(\Omega))$, together with the bounds \fref{bound4} and Lemma \ref{lemL2}, imply that $\hat \varrho_N$ converges to $\varrho_{eq}[n]$. Since $\hat \varrho_N$ converges as well to $\varrho$ weakly-$*$ in $L^\infty(0,T,\calJ_1)$, this shows that $\varrho=\varrho_{eq}[n]$. \\

It remains to identify $A_{eq}$ with $A$, which is done with the representation formula \fref{repform}.

\paragraph{Step 2: Passing to the limit in the representation formula.} According to \fref{repform}, we know that $A_{eq}$ reads, since we have just proved that $\varrho=\varrho_{eq}[n]$,
\be \label{repform2}
A_{eq}=-V_0+\frac{1}{n}\left(\frac{1}{2}\Delta n +n[\nabla \varrho \nabla]-n[\varrho \log \varrho] \right).
\ee
We want to recover the right-hand side above by passing to the limit in the representation formula for $\hat A_N$. We have then, for any $\varphi \in C^2([0,T] \times \overline{\Omega})$, periodic in $x$,
\bee
\int_0^T (\hat n_N \hat A_N, \varphi) dt&=& \int_0^T \big(-\hat n_N V_0 +\frac{1}{2}\Delta \hat n_N  +n[\nabla \hat{\varrho}_N \nabla]-n[\hat \varrho_N \log \hat \varrho_N],\varphi\big) dt\\
&=&-\int_0^T \big(\hat n_N V_0,\varphi\big) dt +\frac{1}{2}\int_0^T (\hat n_N,\Delta \varphi) dt\\
&& -\int_0^T  \Tr \big( \nabla \hat{\varrho}_N \nabla \, \varphi \big) dt-\int_0^T \Tr \big (\hat \varrho_N \log \hat \varrho_N \, \varphi\big) dt.
\eee
Owing to the weak-$*$ convergence of $\hat n_N$ in $L^\infty(0,T,\Hp)$, passing to the limit in the first two terms in the r.h.s. presents no difficulty and yields the sum
$$
-\int_0^T \big(n V_0,\varphi\big) dt +\frac{1}{2}\int_0^T (n,\Delta \varphi) dt.
$$
Besides, the strong convergence of $\hat n_N$ in $L^2(0,T,L^2(\Omega))$ combined with the weak convergence of $\hat A_N$ in $L^2(0,T,\Hp)$ show that the l.h.s. converges to 
$$
\int_0^T ( n A, \varphi) dt.
$$
The two remaining terms are treated as follows: Write, using the cyclicity of the trace in the third line,
\begin{align*}
&\int_0^T  \Tr \big( \nabla \hat{\varrho}_N \nabla \, \varphi \big) dt \\
&=
\int_0^T  \Tr \big( \nabla (H_0 + \II)^{-1} (H_0 + \II) \hat{\varrho}_N (H_0 + \II)(H_0 + \II)^{-1} \nabla \, \varphi \big) dt\\
&=\int_0^T  \Tr \big( (H_0 + \II) \hat{\varrho}_N (H_0 + \II)(H_0 + \II)^{-1} \nabla \, \varphi \nabla (H_0 + \II)^{-1} \big) dt\\
&:=\int_0^T  \Tr \big( (H_0 + \II)\hat{\varrho}_N (H_0 + \II)\, K\big) dt,
\end{align*}
where $K$ is a compact operator on $L^2(\Omega)$. From the fact that $H_0 \hat \varrho_N H_0 \to H_0 \hat \varrho H_0$ weakly-$*$ in $L^2(0,T,\calJ_1)$, we can conclude that $(H_0 + \II) \hat{\varrho}_N (H_0 +\II) \to (H_0 + \II)\varrho (H_0 + \II)$ weakly-$*$ in $L^2(0,T,\calJ_1)$, and therefore that 
$$
\lim_{N \to \infty}\int_0^T  \Tr \big( \nabla \hat{\varrho}_N \nabla \, \varphi \big) dt = \int_0^T  \Tr \big( \nabla \varrho \nabla \, \varphi \big) dt.
$$
Regarding the last term involving $\hat \varrho_N \log \hat \varrho_N$, we have the following lemma, proved at the end of the section:
\begin{lemma} \label{lemweak} For almost all $t$ in $(0,T)$, the operator $\beta(\hat \varrho_N(t))$ converges weakly in $\calJ_1$ to $\beta( \varrho(t))$.
\end{lemma}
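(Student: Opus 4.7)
The plan is to reduce the weak $\calJ_1$ convergence of $\beta(\hat\varrho_N(t))$ to a dominated convergence argument on the spectral indices. The starting point is the strong convergence $\hat\varrho_N\to\varrho$ in $L^2(0,T,\calJ_2)$ obtained from \eqref{strongL2}, which itself relies on Lemma \ref{lemL2} and \eqref{bound4}. Passing to a subsequence, I may assume $\hat\varrho_N(t)\to\varrho(t)$ in $\calJ_2$, hence in operator norm, for a.e. $t\in(0,T)$. For such $t$, Lemma \ref{lipspec} provides orthonormal eigenbases so that $\hat\varrho_N(t)=\sum_p\rho_p^N|\phi_p^N\rangle\langle\phi_p^N|$ and $\varrho(t)=\sum_p\rho_p|\phi_p\rangle\langle\phi_p|$ with the eigenvalues listed in non-increasing order, and with $\rho_p^N\to\rho_p$, $\phi_p^N\to\phi_p$ in $L^2(\Omega)$ for every $p\in\Nm$.

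Next, I extract a uniform polynomial decay of the eigenvalues in $p$. Combining the bound \eqref{bound7} with Lemma \ref{lieb2}, for a.e. $t$,
$$
\sum_{p\in\Nm}\rho_p^N(t)\,\lambda_p[H_0]\leq\Tr\big(\sqrt{H_0}\,\hat\varrho_N(t)\,\sqrt{H_0}\big)\leq C,
$$
where $\lambda_p[H_0]$ are the eigenvalues of $-\Delta$ on the 1-torus, which satisfy $\sum_{k=0}^{p}\lambda_k[H_0]\geq c\,p^{3}$ for some $c>0$ and $p\geq1$. Since $(\rho_p^N(t))_p$ is non-increasing,
$$
\rho_p^N(t)\sum_{k=0}^{p}\lambda_k[H_0]\leq\sum_{k=0}^{p}\rho_k^N(t)\,\lambda_k[H_0]\leq C,
$$
and hence $\rho_p^N(t)\leq C/p^{3}$ for $p\geq1$, uniformly in $N$. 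Using $|\beta(x)|\leq x|\log x|+x$ together with the monotonicity of $x\mapsto x|\log x|$ on $[0,1/e]$, this produces a majorant $|\beta(\rho_p^N(t))|\leq g(p)$ with $g\in\ell^{1}(\Nm)$ independent of $N$, of order $C\log(p)/p^{3}$ for large $p$ and bounded by a constant on the finitely many remaining indices.

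To conclude, for any $B\in\calL(L^2(\Omega))$,
$$
\Tr\big(\beta(\hat\varrho_N(t))\,B\big)=\sum_{p\in\Nm}\beta(\rho_p^N(t))\,\langle\phi_p^N(t),B\phi_p^N(t)\rangle,
$$
and each term converges to $\beta(\rho_p(t))\langle\phi_p(t),B\phi_p(t)\rangle$ by continuity of $\beta$ on $[0,\infty)$ (with $\beta(0)=0$) together with the $L^2$ convergence of the eigenvectors. As the summand is dominated by $\|B\|\,g(p)$, dominated convergence on $\Nm$ yields $\Tr(\beta(\hat\varrho_N(t))B)\to\Tr(\beta(\varrho(t))B)$ for every $B\in\calL(L^2(\Omega))$, which is the desired weak $\calJ_1$ convergence. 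The main obstacle I anticipate is the non-Lipschitz behavior of $\beta$ near $0$: strong $\calJ_2$ convergence of $\hat\varrho_N$ does not on its own transfer to $\beta(\hat\varrho_N)$, so the uniform $\calE$-bound \eqref{bound7} combined with the Lieb-Thirring-type inequality of Lemma \ref{lieb2}, which forces the polynomial decay of the eigenvalues, is the essential ingredient supplying the summable envelope.
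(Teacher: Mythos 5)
Your proof is correct, and it rests on the same two pillars as the paper's argument: the a.e.-in-$t$ strong $\calJ_2$ convergence coming from \fref{strongL2} combined with Lemma \ref{lipspec} to obtain convergence of the individual eigenvalues and eigenvectors, and the uniform bound \fref{bound7} fed through Lemma \ref{lieb2} to control the contribution of the small eigenvalues. The packaging, however, is genuinely different. The paper splits $\beta=\beta_1+\beta_2$ at a level $\eps$, proves the uniform smallness $\Tr\big(|\beta_1(\hat \varrho_N(t))|\big)\leq C\eps^{1/4}$ via the elementary bound $|\beta(s)|\leq C_M s^{3/4}$ and a Cauchy--Schwarz step against $\sum_p p^2\rho_p^N(t)\leq C$, and then treats the finitely many eigenvalues above $\eps$ termwise (which requires arguing that the index set $\{p:\rho_p^N(t)>\eps\}$ stabilizes for $N$ large). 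You instead convert the very same bound $\sum_p p^2\rho_p^N(t)\leq C$, together with the monotonicity of the eigenvalues, into the pointwise decay $\rho_p^N(t)\leq Cp^{-3}$ uniformly in $N$, which produces an $N$-independent summable envelope of order $\log(p)/p^{3}$ for $|\beta(\rho_p^N(t))|$ and allows a single dominated convergence over the index $p$. The content is essentially equivalent --- your pointwise decay is exactly the information the paper's Cauchy--Schwarz step exploits --- but your route avoids the $\eps$-truncation bookkeeping and the stabilization-of-indices argument, at the minor cost of checking summability of the envelope; you also correctly identify weak $\calJ_1$ convergence as testing against all of $\calL(L^2(\Omega))=\calJ_1^*$, as the paper does.
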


We then write (mostly for notational convenience),
$$
\int_0^T \Tr \big (\hat \varrho_N \log \hat \varrho_N \, \varphi\big) dt=\int_0^T \Tr \big (\beta(\varrho_N) \, \varphi\big) dt+\int_0^T (\hat n_N, \varphi) dt.
$$
Then, according to estimates \fref{ent2} and \fref{rhoE}, we have
$$
|\Tr \big (\beta(\varrho_N) \, \varphi\big)|\leq \|\varphi\|_{L^\infty(0,T,L^\infty)} \| \beta(\hat \varrho_N)\|_{L^\infty(0,T,\calJ_1)}\leq C,
$$
which, using dominated convergence together with Lemma \ref{lemweak} leads to
$$
\lim_{N \to \infty} \int_0^T \Tr \big (\beta(\hat \varrho_N)\, \varphi\big) dt=\int_0^T \Tr \big ( \beta(\varrho)\big) dt.
$$
Finally, the latter, together the weak-$*$ convergence of $\hat n_N$ to $n$ in $L^\infty(0,T,\Hp)$, shows that 
$$ 
\lim_{N \to \infty} \int_0^T \Tr \big (\hat \varrho_N \log \hat \varrho_N \, \varphi\big) dt= \int_0^T \Tr \big ( \varrho \log  \varrho \, \varphi\big) dt.
$$
Collecting the expression of $A_{eq}$ given in \fref{repform2}, and the various limits that we obtained, we can conclude that $A_{eq}=A$. Hence, $n$ and $A$ satisfy the closure relation \fref{clos}. In order to conclude the proof of existence, it remains to prove Lemma \ref{lemL2} and Lemma \ref{lemweak}.

\paragraph{Proof of Lemma \ref{lemL2}.} We show first by duality that $\Delta \hat n_N \in L^2(0,T,L^2(\Omega))$. Indeed, for a test function $\varphi$, we have
$$
\int_0^T \big((\Delta-\II) \hat n_N, \varphi \big) dt=\int_0^T \Tr \big(\hat \varrho_N  (\Delta \varphi-\varphi)  \big) dt.
$$
The latter can be controlled by
$$
\|(\Delta -\II) \hat \varrho_N (\Delta-\II)\|_{L^2(0,T,\calJ_1)} \|  (\Delta-\II)^{-1} (\Delta\varphi -\varphi) (\Delta-\II)^{-1}\|_{L^2(0,T,\calL(L^2(\Omega)))},
$$
which, according to \fref{bound1},\fref{bound6} and Sobolev embeddings, can be bounded by $C\|\varphi\|_{L^2(0,T,L^2)}$. This shows that the limit $n$ of $\hat n_N$ is such that $\Delta n \in L^2(0,T,L^2(\Omega))$. We use now the representation formula \fref{repform}: $A_{eq}$ reads
$$
A_{eq}=-V_0+\frac{1}{n}\left(\frac{1}{2}\Delta n +n[\nabla \varrho_{eq}[n] \nabla]-n[\varrho_{eq}[n] \log \varrho_{eq}[n]] \right).
$$
A direct adaption of \cite{MP-JSP}, Theorem 3, shows that $A_{eq}$ satisfies the estimate, $t$ a.e.,
\bea \label{AL2jsp}
\|A_{eq}(t)\|_{L^2(\Omega)} &\leq& \frac{C}{\underline{n}} \left(\mathfrak{H}_0(n(t))\left(1+\frac{1}{\underline{n}} \left(\| \Delta n(t) \|_{L^2(\Omega)}+\mathfrak{H}_0(n(t))\right)\right)\right)\\
&&+\frac{C}{\underline{n}}\bigg(\exp\left( C (\mathfrak{H}_1(n(t)))^4\right)\bigg)+\|V_0\|_{L^2(\Omega)}, \nonumber
\eea
where $n$ is such that $n \geq \underline{n}$ a.e. on $(0,T)\times \Omega$ and 
\begin{align*}
&\mathfrak{H}_0(n)=1+\beta(\|n\|_{L^1(\Omega)})+\| \sqrt{n}\|^2_{H^1(\Omega)}\\
&\mathfrak{H}_1(n)=\left(1+\|\sqrt{n}\|_{H^1(\Omega)}/\sqrt{\underline{n}} \right)\mathfrak{H}_0(n)/\underline{n}.
%&f_1(n)=\frac{1}{\underline{n}}\left(1+\frac{\|\sqrt{n}\|_{H^1}}{\sqrt{\underline{n}}} \right) \left(1+\beta(\|n\|_{L^1})+\|  \sqrt{n}\|^2_{\H^1}\right).
\end{align*}
Since $n \in L^\infty(0,T,\Hp)$ and $n \geq \underline{n}$, we have that $\mathfrak{H}_0(n(t))$ and $\mathfrak{H}_1(n(t))$ are bounded by a constant independently of $t$. Since moreover $V_0 \in L^\infty(\Omega)$  and $\Delta n \in L^2(0,T,L^2(\Omega))$, we deduce from \fref{AL2jsp} that $A_{eq}\in L^2(0,T,L^2(\Omega))$. This ends the proof.
 % The first term $(V_e+V[n])/ n$ is in  $L^2(0,T,L^2(\Omega))$ since $V_e \in L^\infty(\Omega)$, $V[n]\in L^2(0,T,L^2(\Omega))$ by elliptic regularity, and $n$ is bounded below according to \fref{belowlim}. The second term $\Delta n /n$ is also in $L^2(0,T,L^2(\Omega))$ thanks to Lemma \ref{lemL2} and \fref{belowlim}. The term $n[\sqrt{H_0} \varrho \sqrt{H_0}]$ is easily treated since we have seen above that $H_0 \varrho H_0 \in L^2(0,T,\calJ_1)$. Write indeed
% \begin{align*}
% &\int_0^T \big(n[H_0 \varrho H_0], \varphi \big) dt=\int_0^T \Tr \big(\sqrt{H_0}  \varrho  \sqrt{H_0 }\varphi \big) dt\\
% &\qquad =\int_0^T \Tr \big((H_0+\II)^{1/2}\sqrt{H_0}  \varrho  \sqrt{H_0} (H_0+\II)^{1/2} (H_0+\II)^{-1/2}\varphi (H_0+\II)^{-1/2}\big) dt,
% \end{align*}
% which, as before, is controlled by $C\|\varphi\|_{L^2(0,T,L^2)}$.

\paragraph{Proof of Lemma \ref{lemweak}.} We define for $s \geq 0$ and some $\eps>0$,
$$
\beta(s)=\beta_1(s)+\beta_2(s):= \un_{s \leq \eps}\beta(s)+\un_{s > \eps}\beta(s)
$$
and split $n[\beta(\varrho_N)]$ accordingly into $n[\beta_1(\varrho_N)]+n[\beta_2(\varrho_N)]$. Let $M=\sup_{N}\|\hat \varrho_N\|_{L^\infty(0,T,\calL(L^2))}$. Then, there exists a constant $C_M>0$ such that
$$\forall s\in [0,M],\quad \left|s\log s-s\right|\leq C_Ms^{3/4}.$$
Thus, for all $\eps>0$ and $(\rho_p^N)_{p \in \Nm}$ the (nonincreasing) eigenvalues of $\hat \varrho_N$, we have  $t$ a.e.,
\bea \nonumber
\Tr\big(|\beta_1(\hat \varrho_N(t))|\big)&=&\sum_{\rho_p^N(t)\leq \eps}\left|\beta(\rho_p^N(t))\right|\\[3mm] \nonumber
&\leq& C_M\sum_{\rho_p^N(t)\leq \eps}(\rho_p(t)^N)^{3/4}\leq C_M\eps^{1/4}\sum_{\rho_p^N(t)\leq \eps}(\rho_p^N(t))^{1/2}\\ \nonumber
&\leq& C_M\eps^{1/4}\left(\sum_{p\geq 1}p^2\rho_p^N(t)\right)^{1/2}\left(\sum_{p\geq 1}\frac{1}{p^2}\right)^{1/2}\\[3mm]
&\leq &C\eps^{1/4}\left(\Tr \sqrt{H_0}\hat \varrho_N(t) \sqrt{H_0}\right)^{1/2}\leq C\eps^{1/4}, \label{lemweak1}
\eea
where $C$ is independent of $N$ and $t$, and where we used Lemma \ref{lieb2} with $\calH=H_0$ and estimate \fref{bound7}. We treat now the term
% $$
% \int_0^T \Tr \big (\beta_2(\hat \varrho_N) \, B\big) dt:=\int_0^T f_N(t)dt,
% $$
$$
\Tr \big (\beta_2(\hat \varrho_N(t)) \, B\big):=f_N(t),
$$
where $B$ is a bounded operator. To this aim, denote $P(t)=\max \left\{p:\,\rho_p(t) > \eps\right\}$, where $(\rho_p)_{p \in \Nm}$ is the nonincreasing sequence of eigenvalues of $\varrho$. Recall that as a consequence of \fref{strongL2}, $\hat \varrho_N \to \varrho$, strongly in $L^2(0,T,\calJ_2)$, and therefore that there is a subsequence such that $\hat \varrho_N(t) \to \varrho(t)$, $t$ a.e. in $\calJ_2$. Then, according to Lemma \ref{lipspec}, we have
\be \label{cvvalprop}
\forall p\in \Nm, \qquad \rho_p^N(t) \to \rho_p(t), \qquad t\; a.e.,
\ee 
and we can choose $N$ sufficiently large so that, $t$ a.e., 
$$\rho_p^N(t)>\eps\mbox{ for all } p\leq P(t)\mbox{ and }\rho_p^N(t)<\eps \mbox{ for all } p> P(t).$$
Besides, following again Lemma \ref{lipspec}, we can choose some eigenbasis $(\phi_p^N)_{p\in\Nm}$ and $(\phi_p)_{p\in\Nm}$ of $\hat \varrho_N$ and $\varrho$, respectively, such that, 
\be
\label{cvvectprop}
\forall p\in \Nm,\qquad \lim_{N \to \infty}\|\phi_p^N(t)-\phi_p(t)\|_{L^2} =0, \qquad t\; a.e..
\ee
Finally, the function $f_N(t)$ reads
$$
f_N(t)=\sum_{p=0}^{P(t)} \beta(\rho_p^N(t)) (\phi_p^N(t), B \phi_p^N(t)).
$$
Then, according to \fref{cvvalprop}-\fref{cvvectprop}, it follows that
$$
\lim_{N \to \infty} f_N(t)= \sum_{p=0}^{P(t)} \beta(\rho_p(t)) (\phi_p(t), B \phi_p(t)), \qquad t\; a.e..
$$
Together with estimate \fref{lemweak1}, this concludes the proof of Lemma \ref{lemweak} and the proof of existence.

\subsubsection{The free energy derivative.} \label{proofderiv} We prove here estimate \fref{derivfree}, which is a major ingredient in the exponential convergence to the equilibrium. Consider first a solution $(n,A,V)$ to the QDD system, and define $\varrho:=\exp(-(H+A))$, which belongs to $L^\infty(0,T,\calE_+)$ according to Theorem \ref{mainth}. Together with $V \in L^\infty(0,T,H^1_0(\Omega))$ and \fref{ent2}, this implies that $F(\varrho) \in L^\infty(0,T)$. Since moreover $A \in L^2(0,T,L^2(\Omega))$, it follows from \fref{freeA} that $F(\varrho)=\calF[n]$. A formal proof of \fref{derivfree} is then direct: write
$$
\frac{d \calF[n(t)]}{dt}=-\int_\Omega \partial_t n (A+1) dx-\int_\Omega  n \partial_t A dx. 
$$ 
It is shown in \cite{QDD-SIAM} that
$$
0=\frac{d}{dt} \|n_0\|_{L^1}=\frac{d}{dt} \|n(t)\|_{L^1}=\frac{d}{dt} \Tr (e^{H+A(t)})=(n,\partial_t A),
$$
and \fref{derivfree} follows by replacing $\partial_t n$ by its expression given in \fref{qdd}. The rigorous justification requires more work since we have a priori no information about $\partial_t A$, and a regularization does not seem straightforward. We then use crucially here the convexity of the free energy $F$ to justify the calculations. First, the G\^ateaux derivative of $F$ at $\varrho$ exists in any direction $u \in \calJ_1$. Indeed, a direct calculation shows that
$$
DF[\varrho](u)=\Tr \big( (\log \varrho +H_0+V_0+V) u\big)=\Tr \big( (V-A) u\big)=(V-A,n[u]),
$$
which is finite whenever $u \in \calJ_1$ since $(A-V)(t) \in L^\infty(\Omega)$ almost everywhere in $t$. By convexity of $F$, we have then, for $t$ a.e. in $(0,T)$, for $h$ sufficiently small that $t+h \in (0,T)$,
\bea \nonumber
F(\varrho(t+h))-F(\varrho(t)) &\geq& DF[\varrho(t)]  (\varrho(t+h)-\varrho(t))\\
&=&(V(t)-A(t),n(t+h)-n(t)).\label{conve11}
\eea 
In the same way, for $t-h \in (0,T)$,
%\bee
%F(\varrho(t-h))-F(\varrho(t)) &\geq& DF(\varrho(t))(\varrho(t-h)-\varrho(t))
%=-(A(t),n(t-h)-n(t))
%\eee
\bea \nonumber
F(\varrho(t))-F(\varrho(t-h)) &\leq& DF[\varrho(t)](\varrho(t)-\varrho(t-h))\\
&=&(V(t)-A(t),n(t)-n(t-h)).  \label{conve}
\eea
Integrating \fref{conve} between $h$ and $s \in (0,T)$, we find
\begin{align*}
&h^{-1} \int_{s-h}^s F(\varrho(t))dt-h^{-1} \int_{0} ^h F(\varrho(t))dt=h^{-1} \int_h^s (F(\varrho(t))-F(\varrho(t-h)))dt \hfill \\
%&&h^{-1} \int_{0} ^h F(\varrho(T-t))dt-h^{-1} \int_{0} ^h F(\varrho(t))d
&\hspace{5cm}\leq h^{-1} \int_h^s \int_{t-h}^t \langle \partial_\tau n(\tau),V(t)-A(t)\rangle_{\Hmp,\Hp} d\tau dt.
\end{align*}
Since $F(\varrho) \in L^\infty(0,T)$, the Lebesgue differentiation theorem yields, $s$ a.e. in $(0,T)$,
$$
\lim_{h \to 0 } \left(h^{-1} \int_{s-h}^s F(\varrho(t))dt-h^{-1} \int_{0} ^h F(\varrho(t))dt \right)=F(\varrho(s))-F(\varrho(0)).
$$
On the other hand, since $\partial_t n \in L^2(0,T,\Hmp)$, and $(V-A) \in L^2(0,T,\Hp)$ according to Theorem \ref{mainth}, invoking again the Lebesgue differentiation theorem shows that, $t$ a.e.,
$$
\lim_{h \to 0 } h^{-1} \int_{t-h}^t \langle \partial_\tau n(\tau), V(t)-A(t) \rangle_{\Hmp,\Hp} d\tau =\langle \partial_t n(t), V(t)-A(t)\rangle_{\Hmp,\Hp}.
$$
Dominated convergence then allows us to conclude that (we use here the maximal function of $\langle \partial_t n(t), V(t)-A(t)\rangle_{\Hmp,\Hp}$ as dominating function),
$$
\lim_{h \to 0 }  \int_h^s h^{-1} \int_{t-h}^t \langle \partial_t n(\tau),V(t)-A(t)\rangle_{\Hmp,\Hp} d\tau dt = \int_0 ^s\langle \partial_t n(t), V(t)-A(t)\rangle_{\Hmp,\Hp} dt,
$$
and therefore
$$
F(\varrho(s))-F(\varrho(0)) \leq \int_0 ^s\langle \partial_t n(t), V(t)-A(t)\rangle_{\Hmp,\Hp} dt.
$$
Proceeding as above, the other convexity inequality \fref{conve11} shows that the above inequality is in fact in equality. This means in particular that $F(\varrho(s))$ is absolutely continuous, and that the almost everywhere defined derivative satisfies \fref{derivfree}, after replacing $\partial_t n$ by its expression in the QDD equation \fref{qdd}. This ends the proof. 

\subsection{Exponential convergence to the equilibrium} \label{expoconv}
The main ingredients are the expression of the time derivative of the free energy given in \fref{derivfree}, together with some logarithmic-Sobolev type  inequality derived from \fref{rel}. The first step is to rewrite appropriately \fref{rel} and to specialize it to our problem.

For $W_i \in L^2(\Omega)$, $i=1,2$, $\varrho_i=\exp(-(H_0+W_i))$, $n_i\equiv n[\varrho_i]$ and $V_i \equiv V[n_i]$ the Poisson potential, we rewrite estimate \fref{rel} as
\bee
S(\varrho_1, \varrho_2)+S(\varrho_2,\varrho_1)&=&(W_2-W_1,n_1-n_2)\\
&=&(W_2-V_2-W_1+V_1,n_1-n_2)-(V_1-V_2,n_1-n_2)\\
&=&(W_2-V_2-W_1+V_1,n_1-n_2)-\| \nabla (V_1-V_2)\|^2_{L^2}.
\eee
Above, we used the Poisson equation to obtain the last term. Denote by $(n_\infty, A_\infty,V_\infty)$ the solution to the stationary problem of Theorem \ref{SP} with constraint $\Tr(\varrho_\infty)=\|n_0\|_{L^1}$, where $n_0$ is the initial condition. Introduce similarly a solution $(n,A,V)$ to the QDD system. With $W_1=V_0+A$ and $W_2=V_0+A_\infty$, $n_1=n$, $n_2=n_\infty$, $\varrho_1=\varrho$, $\varrho_2=\varrho_\infty$, $V_1=V$, $V_2=V_\infty$, using the facts that $A_\infty-V_\infty$ is equal to the constant $-\epsilon_F$ and that $(1,n-n_\infty)=0$, we find
\bee
S(\varrho, \varrho_\infty)+S(\varrho_\infty,\varrho)
&=&(-\epsilon_F-A+V,n-n_\infty)-\| \nabla (V-V_\infty)\|^2_{L^2}\\
&=&-(A-V-(A-V)_\Omega,n-n_\infty)-\| \nabla (V-V_\infty)\|^2_{L^2},
\eee
where we recall that $(A-V)_\Omega$ is the average of $A-V$ over $\Omega$. 
Since $S(\varrho_\infty,\varrho) \geq 0$, the latter equality implies, together with the inclusion $H^1(\Omega) \subset L^\infty(\Omega)$ and the Poincar\'e-Wirtinger inequality, that
%$$
%S(\varrho | \varrho_\infty)+\| \nabla (V-V_\infty)\|^2_{L^2} \leq -(A-V- (A-V)_\Omega \rangle,n-n_\infty)
%$$
% and the embedding $H^1(\Omega) \subset L^\infty(\Omega)$ yield
\bea \nonumber
S(\varrho, \varrho_\infty) +\frac{1}{2}\| \nabla (V-V_\infty)\|^2_{L^2}&\leq& \| A-V- (A-V)_{\Omega}\|_{L^\infty(\Omega)}\|n-n_\infty\|_{L^1(\Omega)}\\\nonumber
&\leq & C \| A-V- (A-V)_{\Omega}\|_{H^1(\Omega)}\|n-n_\infty\|_{L^1(\Omega)}\\
&\leq& C \| \nabla (A-V)\|_{L^2(\Omega)}\|n-n_\infty\|_{L^1(\Omega)}. \label{estS}
\eea

The second step of the proof is to relate the l.h.s. of the above inequality to the free energy, and the r.h.s. to the dissipation rate of the free energy appearing in \fref{derivfree}. The first part follows from the straightforward lemma below, proved at the end of the section.

\begin{lemma} \label{FE} The free energy satisfies, $t$ a.e.,
$$
F(\varrho(t))-F(\varrho_\infty)=S(\varrho(t),\varrho_\infty)+\frac{1}{2}\|\nabla (V(t)-V_\infty)\|^2_{L^2}.
$$
\end{lemma}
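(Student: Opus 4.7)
\textbf{Proof plan for Lemma \ref{FE}.} The statement is an algebraic identity, so the plan is to expand both sides using the explicit formulas for $F$ and $S$, and then exploit (i) conservation of mass, (ii) the relation $A_\infty=V_\infty-\epsilon_F$, and (iii) the Poisson equations with vanishing boundary values.

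First I would apply \fref{freeA} both at time $t$ and at the equilibrium (justified since $A(t),A_\infty\in L^2(\Omega)$ and the associated operators lie in $\calE_+$ with $H_0\varrho H_0\in\calJ_1$), to obtain
\[
F(\varrho(t))-F(\varrho_\infty)=-(A(t)+1,n(t))+(A_\infty+1,n_\infty)+\tfrac12\|\nabla V(t)\|_{L^2}^2-\tfrac12\|\nabla V_\infty\|_{L^2}^2.
\]
Next, since $\log\varrho(t)=-(H_0+V_0+A(t))$ and $\log\varrho_\infty=-(H_0+V_0+A_\infty)$, the kinetic and external-potential pieces cancel in $\log\varrho(t)-\log\varrho_\infty$, so that
\[
S(\varrho(t),\varrho_\infty)=\Tr\bigl(\varrho(t)(\log\varrho(t)-\log\varrho_\infty)\bigr)=(n(t),A_\infty-A(t)).
\]
I would insist on the traces being well defined exactly as in Lemma \ref{lemlip}: $V_0+A(t),V_0+A_\infty\in L^\infty(\Omega)$ thanks to the 1D Sobolev embedding $H^1_{per}\subset L^\infty$, so $\Tr(\varrho(t)(V_0+A(t)))$ and $\Tr(\varrho(t)(V_0+A_\infty))$ are finite, and the $H_0$-contributions cancel exactly.

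Combining these two displays and subtracting $(n(t),A_\infty-A(t))$ reduces the claim to the identity
\[
(A_\infty,n_\infty-n(t))+(1,n_\infty-n(t))=-(\nabla V(t),\nabla V_\infty)+\|\nabla V_\infty\|_{L^2}^2.
\]
The mass term vanishes because $\|n(t)\|_{L^1}=\|n_0\|_{L^1}=\|n_\infty\|_{L^1}$ (the first equality is the weak formulation \fref{weak1} tested against $\varphi\equiv 1$, the second follows from the constraint in Theorem \ref{SP}). Writing $A_\infty=V_\infty-\epsilon_F$ and using $(1,n(t)-n_\infty)=0$ again kills the constant $\epsilon_F$, leaving only $(V_\infty,n_\infty-n(t))$. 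Now I would use the Poisson equations $-\Delta V(t)=n(t)$, $-\Delta V_\infty=n_\infty$ with $V(t),V_\infty\in H^1_0(\Omega)$, and integrate by parts:
\[
(V_\infty,n_\infty-n(t))=(\nabla V_\infty,\nabla V_\infty)-(\nabla V_\infty,\nabla V(t))=\|\nabla V_\infty\|_{L^2}^2-(\nabla V(t),\nabla V_\infty),
\]
which matches exactly the right-hand side. Reassembling, the cross term $-(\nabla V(t),\nabla V_\infty)$ together with $\tfrac12\|\nabla V(t)\|_{L^2}^2+\tfrac12\|\nabla V_\infty\|_{L^2}^2$ collapses to $\tfrac12\|\nabla(V(t)-V_\infty)\|_{L^2}^2$, finishing the proof.

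The computation is entirely elementary; the only subtlety worth double-checking is the well-posedness of the trace $\Tr(\varrho(t)\log\varrho_\infty)$, which is handled exactly as in the proof of Lemma \ref{lemlip} via Lemma \ref{regmin} and the $L^\infty$ bound on $V_0+A_\infty$ coming from Theorem \ref{SP}. No further PDE-level argument or convexity input is needed.
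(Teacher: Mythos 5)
Your proof is correct and follows essentially the same route as the paper: apply \fref{freeA} at time $t$ and at equilibrium, identify $S(\varrho(t),\varrho_\infty)=(n(t),A_\infty-A(t))$, and dispose of the remaining terms via mass conservation, $A_\infty=V_\infty-\epsilon_F$, and integration by parts in the Poisson equations with Dirichlet data. The only cosmetic caveat is that mass conservation at time $t$ is more cleanly obtained by passing to the limit in \fref{L1} (or by testing \fref{weak1} with a cut-off of the constant function) rather than literally taking $\varphi\equiv 1$, which is not an admissible test function since $\varphi$ must vanish for $t\geq T$.
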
 

As a consequence, the l.h.s. of \fref{estS} is simply the difference of the free energies $F(\varrho(t))-F(\varrho_\infty)$. It remains now to relate the r.h.s.. The next key lemma, proved further and based on \fref{estS} and the Klein inequality of Lemma \fref{pinsker}, allows us to control $n-n_\infty$ in $L^1(\Omega)$ in terms of $\nabla (A-V)$ in $L^2(\Omega)$.
\begin{lemma} \label{LL1} The following estimate holds:
$$
\| n-n_\infty \|_{L^1(\Omega)} \leq C \| \nabla (A-V)\|_{L^2(\Omega)}.
$$
\end{lemma}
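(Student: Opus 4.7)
The plan is to bootstrap \fref{estS}. I first aim to establish the auxiliary bound $\|n-n_\infty\|_{L^1(\Omega)}\leq C\sqrt{S(\varrho,\varrho_\infty)}$; combining it with \fref{estS} then yields $S(\varrho,\varrho_\infty)\leq C\|\nabla(A-V)\|_{L^2}\sqrt{S(\varrho,\varrho_\infty)}$. Dividing by $\sqrt{S(\varrho,\varrho_\infty)}$ (the case $S=0$ being trivial, since then $\varrho=\varrho_\infty$ and hence $n=n_\infty$) gives $\sqrt{S(\varrho,\varrho_\infty)}\leq C\|\nabla(A-V)\|_{L^2}$, and reinserting this in the auxiliary bound delivers the conclusion.

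For the auxiliary estimate, set $\sigma=\varrho-\varrho_\infty$ and $M=|\log\varrho_\infty|$. Since $\sigma$ is self-adjoint and $M$ commutes with $\varrho_\infty$, the Klein inequality of Lemma~\ref{pinsker} rewrites as $\|(\un+M)^{1/2}\sigma\|_{\calJ_2}^2=\Tr((\un+M)\sigma^2)\leq C^{-1}S(\varrho,\varrho_\infty)$. By $L^1$--$L^\infty$ duality, for every $\phi\in L^\infty(\Omega)$ with $\|\phi\|_{L^\infty}\leq 1$,
\begin{equation*}
\int_\Omega \phi\,(n-n_\infty)\,dx=\Tr(\Phi\sigma)=\Tr\big(\Phi(\un+M)^{-1/2}\cdot(\un+M)^{1/2}\sigma\big),
\end{equation*}
and the Cauchy--Schwarz inequality in $\calJ_2$ gives $|\Tr(\Phi\sigma)|\leq \|\Phi(\un+M)^{-1/2}\|_{\calJ_2}\,\|(\un+M)^{1/2}\sigma\|_{\calJ_2}$. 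The operator inequality $|\phi|^2\leq \|\phi\|_{L^\infty}^2\un$, conjugated by $(\un+M)^{-1/2}$, furnishes $\|\Phi(\un+M)^{-1/2}\|_{\calJ_2}^2=\Tr((\un+M)^{-1/2}|\phi|^2(\un+M)^{-1/2})\leq \Tr((\un+M)^{-1})$. Taking the supremum over admissible $\phi$ and invoking the Klein bound yields $\|n-n_\infty\|_{L^1}\leq C\sqrt{\Tr((\un+M)^{-1})\,S(\varrho,\varrho_\infty)}$.

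The step I expect to be the main obstacle is verifying that $\Tr((\un+M)^{-1})<\infty$, and this is precisely where the one-dimensional restriction enters. The eigenvalues of $M$ are $|\lambda_p[H+A_\infty]|$, and the inclusion $A_\infty\in\Hp\subset L^\infty(\Omega)$ (specific to dimension one) together with $V_0\in L^\infty(\Omega)$ shows that $H+A_\infty$ is a bounded $L^\infty$-perturbation of $H_0$ in the form sense. The min-max principle then yields $\lambda_p[H+A_\infty]\geq \lambda_p[H_0]-\|V_0+A_\infty\|_{L^\infty}$, and since $\lambda_p[H_0]\sim p^2$, one has $|\lambda_p[H+A_\infty]|\geq \lambda_p[H_0]/2$ for $p$ large. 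Summing gives $\Tr((\un+M)^{-1})=\sum_p (1+|\lambda_p[H+A_\infty]|)^{-1}\leq C\sum_p (1+p^2)^{-1}<\infty$. This closes the auxiliary estimate and completes the bootstrap argument.
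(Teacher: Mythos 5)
Your proposal follows essentially the same route as the paper: a duality estimate weighted by $(\un+|H+A_\infty|)^{1/2}$, a correspondingly weighted Klein inequality, finiteness of $\Tr\big((\un+|H+A_\infty|)^{-1}\big)$ via the min-max principle in one dimension, and a bootstrap through \fref{estS}. The only place the paper works noticeably harder is in justifying the identity $\Tr((\un+M)\sigma^2)=\|(\un+M)^{1/2}\sigma\|_{\calJ_2}^2$ for the unbounded weight $M=|\log\varrho_\infty|$ — it truncates with spectral projectors of $\varrho_\infty$ and passes to the limit — whereas your appeal to the commutation of $M$ with $\varrho_\infty$ does not by itself settle this cyclicity-of-the-trace point.
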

At this point, we have therefore obtained the inequality
$$
F(\varrho(t))-F(\varrho_\infty) \leq C \| \nabla (A-V)(t)\|^2_{L^2(\Omega)}.
$$
Using the fact that $n \geq \underline{n}$, a.e. on $(0,T) \times \Omega$, we can exhibit the free energy dissipation rate in r.h.s. in order to obtain
\be \label{dissip}
F(\varrho(t))-F(\varrho_\infty) \leq C \underline{n}^{-1} \| \sqrt{n}(t) \nabla (A-V)(t)\|^2_{L^2(\Omega)}.
\ee
As mentioned in the introduction, this inequality can be seen as non-commutative log-Sobolev inequality for the operator $\varrho$. We have everything needed now to conclude: according to \fref{derivfree}, $t$ almost everywhere,
%\be \label{Dfree2}
$$
\frac{d}{dt} F(\varrho(t))=\frac{d}{dt} \left(F(\varrho(t))-F(\varrho_\infty)\right)=-\int_\Omega n(t) |\nabla (A(t)-V(t))|^2 dx.
$$
%\ee
This, together with  \fref{dissip} leads to
$$
\frac{d}{dt} \left(F(\varrho(t))-F(\varrho_\infty)\right)+C\left( F(\varrho(t))-F(\varrho_\infty) \right) \leq 0,
$$
and the conclusion follows from the Gronwall lemma. It remains to prove Lemmas \ref{FE} and \ref{LL1}

\paragraph{Proof of Lemma \ref{LL1}.} The first step is to obtain the estimate below:
\be \label{LLL1}
\| n-n_\infty \|_{L^1(\Omega)} \leq C \| (\varrho-\varrho_\infty)(1+ |H+A_\infty|)^{1/2}\|_{\calJ_2}.
\ee
We proceed as usual by duality. Let $u=\varrho-\varrho_\infty$ and $R=1+ |H+A_\infty|$. Then, for any smooth function $\varphi$,
 \bee
 \left|(n[u],\varphi)\right|&=&\left|  \Tr \big( u \varphi\big) \right|=\left|  \Tr \big( u R^{1/2} R^{-1/2} \varphi\big) \right|\\
 &\leq& \| u R^{1/2}\|_{\calJ_2}\| R^{-1/2} \varphi\|_{\calJ_2}.
 \eee
For $(\lambda_p,u_p)_{p \in \Nm}$ the spectral elements of $R$, and  $(e_k)_{k \in \Nm}$ any basis of $L^2(\Omega)$, the last term satisfies
\bee
\| R^{-1/2} \varphi\|^2_{\calJ_2}&=&\sum_{k \in \Nm } \left\| R^{-1/2} \varphi e_k \right\|^2_{L^2(\Omega)}=\sum_{k \in \Nm }\sum_{p \in \Nm } \left|(R^{-1/2} \varphi e_k,u_p)\right|^2\\
&=&\sum_{k \in \Nm }\sum_{p \in \Nm } \lambda^{-1}_p \left|(\varphi e_k,u_p)\right|^2 =\sum_{p \in \Nm } \lambda_p^{-1}\left\|\overline{\varphi} u_p \right\|^2_{L^2(\Omega)} \\
&\leq& \|\varphi\|_{L^\infty(\Omega)} \sum_{p \in \Nm } \lambda^{-1}_p.
\eee
Now, since $A_\infty=V_\infty-\epsilon_F$ and $V_\infty \in H^1_0(\Omega)$, the minimax principle shows that the eigenvalues of $H+A_\infty$, indexed by $p \in \Nm$, are bounded below by $C p^2 +C'$, with $C>0$, which is positive for $p$ sufficiently large. As consequence $0<1+Cp^2+C'\leq \lambda_p $ for large $p$, and the sum above is finite. This proves estimate \fref{LLL1}.

The second step is to control the r.h.s. of \fref{LLL1} by the relative entropy between $\varrho$ and $\varrho_\infty$ with the goal of using \fref{estS}. We use for this a slightly different version of the Klein inequality of Lemma \ref{pinsker}. We claim that \be \label{entbelow}
C \Tr \big( (1+ |H+A_\infty|)^{1/2}(\varrho-\varrho_\infty)^2(1+ |H+A_\infty|)^{1/2}\big) \leq S(\varrho,\varrho_\infty).
\ee
Together with \fref{LLL1} and \fref{estS}, this ends the proof of lemma provided we justify \fref{entbelow}, which is only a matter of properly using the cyclicity of the trace. Take two operators $\varrho_1$ and $\varrho_2$ in $\calE_+$,  with $S(\varrho_1,\varrho_2)<\infty$. Let $P_k$ be the spectral projector on the first $k$ modes of $\varrho_2$. According to Lemma \ref{pinsker}, we have
\be \label{defsi}
\Tr \big( (1+ |\log(P_k\varrho_2 P_k )|)(P_k \varrho_1 P_k -P_k \varrho_2 P_k )^2\big) \leq S(P_k \varrho_1 P_k,P_k \varrho_2 P_k).
\ee
Since $(1+ |\log(P_k\varrho_2 P_k )|)^{1/2}$ is a bounded operator (indeed the eigenvalues of $\varrho_2$ are positive, nonincreasing, and converging to zero), cyclicity of the trace shows that the l.h.s. of the above inequality reads
\be \label{defsi2}
\Tr \big( \sigma_k \big):=\Tr \big( (1+ |\log(P_k\varrho_2 P_k )|)^{1/2}(P_k \varrho_1 P_k -P_k \varrho_2 P_k )^2(1+ |\log(P_k\varrho_2 P_k )|)^{1/2}\big).
\ee
It just remains to pass to the limit. According to \cite{LS}, Theorem 2, we have, since $P_k \to \II$ strongly in $\calL(L^2(\Omega))$,
\be \label{convS}
 \lim_{k \to \infty} S(P_k \varrho_1 P_k,P_k \varrho_2 P_k)=S(\varrho_1 , \varrho_2 ).
\ee
On the other hand, we conclude from \fref{defsi}, \fref{defsi2} and \fref{convS}, that there is a $\sigma \in \calJ_1$, nonnegative, such that
$$
\sigma_k \to \sigma \quad \textrm{weak-$*$ in }\calJ_1 \quad \textrm{and} \quad \Tr \big(\sigma \big) \leq \liminf_{k \to \infty } \Tr \big( \sigma_k \big) \leq S(\varrho_1 , \varrho_2 ).
$$
Since finally $P_k \varrho_i P_k \to \varrho_i$, strongly in $\calJ_1$ for $i=1,2$, we can identify $\sigma$ with $(1+ |\log(\varrho_2 )|)^{1/2}( \varrho_1 - \varrho_2 )^2(1+ |\log(\varrho_2 )|)^{1/2}$. Indeed we have, for all compact operator $K$, 
$$
\lim_{k \to \infty} \Tr \big( \sigma_k K\big)=\Tr\big( \sigma K\big).
$$
Choosing for instance $K= (1+ |\log(\varrho_2)|)^{-1/2} K_0 (1+ |\log(\varrho_2)|)^{-1/2}$ for $K_0$ compact then yields the result.

\paragraph{Proof of Lemma \ref{FE}.} The proof is a simple calculation. Since $A \in L^2(0,T,L^2(\Omega))$ and $A_\infty \in L^2(\Omega)$, we can use relation \fref{freeA} of Lemma \ref{lemlip} to arrive at
\bee
F(\varrho(t))-F(\varrho_\infty)&=&-\int_\Omega n(t)(A(t)+1) dt+\int_\Omega n_\infty(A_\infty+1)dt\\
&&+\frac{1}{2}\|\nabla V(t)\|^2_{L^2}-\frac{1}{2}\|\nabla V_\infty\|^2_{L^2}\\
&=&-\int_\Omega \left(n(t)(A(t)-A_\infty) +n(t)-n_\infty \right)dt\\
&&+\int_\Omega n_\infty A_\infty dt-\int_\Omega n A_\infty dt+\frac{1}{2}\|\nabla V(t)\|^2_{L^2}-\frac{1}{2}\|\nabla V_\infty\|^2_{L^2}.
\eee
Using the facts that $A_\infty-V_\infty=\epsilon_F$ is constant, that $\|n(t)\|_{L^1}=\|n_\infty\|_{L^1}=\|n_0\|_{L^1}$, and that $-\Delta (V-V_\infty)=n-n_\infty$ equipped with Dirichlet boundary conditions, we obtain the desired result. This ends the proof of the lemma and of the convergence to the equilibrium.

% following inequality:
% $$
% F(\varrho(t))-F(\varrho_\infty)=S(\varrho(t),\varrho_\infty)+\frac{1}{2}\|\nabla (V(t)-V_\infty)\|^2_{L^2}.
% $$ 

\section{Appendix}
\subsection{Proof of Lemma \ref{regmin}}
We work with a regular periodic potential $V \in C^\infty(\overline{\Omega})$ and obtain the final result by density. The Hamiltonian $H_0+V$ with domain $H^2_{per}$ has a compact resolvant, and we denote by $(\mu_p,\phi_p)_{p \in \Nm}$ its spectral decomposition, with the sequence $(\mu_p)_{p \in \Nm}$ nondecreasing. The minimax principle shows that
\be \label{controlmu}
\frac{1}{2} \gamma_p-C \|V\|^2_{L^2}-C \leq \mu_p \leq \frac{3}{2} \gamma_p +C \|V\|^2_{L^2}+C,
\ee
where $\gamma_p=(2 \pi p)^2$ is an eigenvalue of $H_0$. We have moreover the direct estimate
%\be \label{estimgrad}
$$
\| \nabla \phi_p\|^2_{L^2} \leq  C|\mu_p|+C\|V\|^2_{L^2} +C.
$$
%\ee
This yields, for $\varrho=\exp(-(H_0+V))$,
$$
\Tr \big( \sqrt{H_0} \varrho \sqrt{H_0}\big)=\sum_{p \in \Nm} e^{- \mu_p} \|\nabla \phi_p\|^2_{L^2} \leq C\sum_{p \in \Nm} e^{- C \gamma_p+C} (1+\gamma_p) < \infty,
$$
and therefore $\varrho \in \calE_+$. We turn now to estimate \fref{controlmu}. There are several ways to control $H_0 \varrho H_0$ in $\calJ_1$, and since the system \fref{semiqdd}-\fref{poisson2}-\fref{den2} provides us with direct bounds for $\varrho$ in $\calE_+$ and for the chemical potential $A$ in $L^2$, we estimate $H_0 \varrho H_0$ in terms of these quantities. We write first
\bee
\Tr\big( H_0 \varrho H_0 \big)&=&\Tr\big( (H_0+V) \varrho (H_0+V) \big)-\Tr\big( (H_0+V) \varrho V \big)\\
&&-\Tr\big( V \varrho (H_0+V) \big) +\Tr\big( V \varrho V \big)\\
&:=&T_1+T_2+T_3+T_4.
\eee
All terms above are well defined since $V$ is bounded and $H_0+V$ is bounded below, so that $(H_0+V) \varrho (H_0+V)$ is trace class. We start with the term $T_1$. 

\paragraph{The term $T_1$.} Let $N(V) \in \Nm$ such that $\mu_p \leq 0$ for $p \leq N(V)$, and $\mu_p>0$ for $p>N(V)$. Note that $N(V)$ is finite since $H_0+V$ is bounded below. Using the fact that $\forall \eps \in (0,1)$, there exists $C_\eps>0$ such that,  $ \forall x \geq 0$, $x^2 e^{-x} \leq C_\eps (e^{-x})^{1-\eps}$, we obtain that
\bea \nonumber
T_1&=&\sum_{p \in \Nm} \mu_p^2 e^{-\mu_p} \leq \mu_0^2 \sum_{p \leq N(V)}  e^{-\mu_p}+\sum_{p > N(V)} \mu_p^2 e^{-\mu_p}\\
&\leq & \mu_0^2 \, \Tr \big( \varrho\big)+ C_\eps \Tr \big( \varrho^{1-\eps}\big). \label{eT1}
\eea
In the first term, we control $|\mu_0|$ using the minimax principle:
\bee
0 \geq \mu_0&=& \min_{\phi \in \Hp, \|\phi\|_{L^2}=1} \left( \|\nabla \phi \|^2_{L^2}+ (V,|\phi|^2) \right)\\
&\geq & - \| V \|_{L^2}  \max_{\phi \in \Hp, \|\phi\|_{L^2}=1} \|\phi\|^2_{L^4} \geq - C \| V \|_{L^2},
\eee
%\fref{controlmu},  % is estimated thanks to the minimax principle, which leads to, since the first eigenfunction $\phi_0 \in \Hp \subset L^\infty(\Omega)$,
 % \be \label{mu0}
 % |\mu_0| \leq \|\nabla \phi_0 \|^2_{L^2} + \|V\|_{L^2} \|\phi_0\|^2_{L^\infty} \leq C+C\|V\|_{L^2}.
 % \ee
which gives
\be \label{mu0}
 |\mu_0| \leq  C\|V\|_{L^2}.
 \ee
For the second term in \fref{eT1}, we denote by $(\rho_p)_{p \in \Nm}$ (with $\rho_p=e^{- \mu_p})$ the eigenvalues of $\varrho$, and by $(\lambda_p)_{p \in \Nm}$ those of $H_0+\II$ (with of course $\lambda_p=\gamma_p+1$). Following Lemma \ref{lieb2} with $\calH=H_0+\II$, we find
\bea \nonumber
\Tr \big(\varrho^{1-\eps}\big)&=&\sum_{p \in \Nm} \rho_p^{1-\eps} \leq \left(\sum_{p \in \Nm}\rho_p \lambda_p \right)^{1-\eps} \left( \sum_{p \in \Nm}  \lambda_p^{-(1-\eps)/\eps}\right)^{\eps}\\[3mm]
&\leq & C \Big( \Tr \big( (H_0+\II)^{1/2} \varrho (H_0+\II)^{1/2} \big) \Big)^{1-\eps}, \label{eT11}
\eea
since $\lambda_p=(2 \pi p)^2+1$ and whenever $\eps<2/3$, we have
$$
\sum_{p \in \Nm}  \lambda_p^{-(1-\eps)/\eps} < \infty.
$$
Remarking that
\be \label{rem}
\qquad \forall u \in \calE_+, \qquad \Tr \big( \sqrt{H_0}u \sqrt{H_0} \big) + \Tr \big( u\big) =\Tr \big( (H_0+\II)^{1/2}u (H_0+\II)^{1/2} \big),
\ee
we find, together with \fref{eT1}, \fref{mu0}, and \fref{eT11}, that  $T_1$ can be estimated as follows, for all $\eps \in (0,2/3)$,
\bea \nonumber
T_1 &\leq& C\|V\|^2_{L^2}\Tr \big( \varrho\big)+C\left( \Tr \big( \sqrt{H_0} \varrho \sqrt{H_0} \big)+ \Tr \big( \varrho \big) \right)^{1-\eps} \\
&\leq &C\big(1+\|V\|^2_{L^2}\big)\Tr \big( \varrho\big)+\Tr \big( \sqrt{H_0} \varrho \sqrt{H_0} \big), \label{eT12}
\eea
where we used the Young inequality. 

\paragraph{The term $T_4$.} We turn now to $T_4$, that verifies
$$
T_4=(n[\varrho],V^2),
$$
and it suffices to bound now $n[\varrho]$ in $L^\infty$. This is done by duality: for any $\varphi \in L^\infty(\Omega)$, $\varphi$ nonnegative,
\bee
(n[\varrho],\varphi)&=&\Tr \big( \varrho \varphi\big)=\Tr \big( (H_0+\II)^{1/2}\varrho (H_0+\II)^{1/2}(H_0+\II)^{-1/2}\varphi (H_0+\II)^{-1/2}\big)\\
&\leq &\Tr \big( (H_0+\II)^{1/2}\varrho (H_0+\II)^{1/2} \big) \|(H_0+\II)^{-1/2}\varphi (H_0+\II)^{-1/2}\|_{\calL(L^2)}\\
&\leq&C\Tr \big( (H_0+\II)^{1/2}\varrho (H_0+\II)^{1/2} \big)\|\varphi\|_{L^1}
%&\leq &C \left(\Tr \big( \sqrt{H_0}\varrho \sqrt{H_0} \big) + \Tr \big( \varrho\big) \right) \|\varphi\|_{L^1},
\eee
since $(H_0+\II)^{-1/2}$ is bounded from $L^1(\Omega)$ to $L^\infty(\Omega)$. Accounting for \fref{rem}, we obtain the estimate
\be
T_4 \leq C \|V\|^2_{L^2}\left( \Tr \big( \sqrt{H_0}\varrho \sqrt{H_0} \big) + \Tr \big( \varrho\big) \right). \label{eT4}
\ee
\paragraph{The term $T_2$.} We consider now the term $T_2$, that we first control by, proceeding in the same way as for the term $T_1$, 
$$
|T_2|=|(n[(H_0+V) \varrho],V)| \leq |\mu_0| (n[\varrho],|V|)+C_\eps (n[\varrho^{1-\eps}],|V|).
$$
Using \fref{mu0} and the $L^\infty$ estimate for $n[\varrho]$, we find for the first term
\be \label{t41}
 |\mu_0| (n[\varrho],|V|) \leq C \|V\|^2_{L^2}\left( \Tr \big( \sqrt{H_0}\varrho \sqrt{H_0}\big)  +\Tr \big( \varrho \big) \right).
\ee
For the second term, let $\gamma \in (0,1/2)$, and write
\bee
\Tr \big(\varrho^{1-\eps} |V| \big)&=&\Tr \big((H_0+\II)^{\gamma(1-\eps)}\varrho^{1-\eps}(H_0+\II)^{\gamma(1-\eps)}(H_0+\II)^{-\gamma(1-\eps)}|V|(H_0+\II)^{-\gamma(1-\eps)}\big)\\
&\leq& \Tr \big((H_0+\II)^{\gamma(1-\eps)}\varrho^{1-\eps}(H_0+\II)^{\gamma(1-\eps)} \big) \\
&& \hspace{2cm} \times \|(H_0+\II)^{-\gamma(1-\eps)}|V|(H_0+\II)^{-\gamma(1-\eps)}\|_{\calL(L^2)}.
\eee
We estimate the first term in the r.h.s. with the Araki-Lieb-Thirring inequality:
\begin{align*}
&\Tr \big((H_0+\II)^{\gamma(1-\eps)}\varrho^{1-\eps}(H_0+\II)^{\gamma(1-\eps)} \big) \leq  \Tr \left(\big((H_0+\II)^{\gamma}\varrho (H_0+\II)^{\gamma} \big)^{1-\eps}\right).
\end{align*}
The last term is controlled by using Lemma \ref{lieb2}: Denoting by $(\nu_p)_{p\in \Nm}$ the eigenvalues of $(H_0+\II)^{\gamma}\varrho (H_0+\II)^{\gamma}$, we have
\bee
\sum_{p \in \Nm} \nu_p^{1-\eps} &\leq &\left(\sum_{p \in \Nm} \nu_p \lambda_p^{1-2\gamma} \right)^{1-\eps}  \left(\sum_{p \in \Nm} (\lambda_p^{1-2\gamma})^{-(1-\eps)/\eps}\right)^{\eps}\\
&\leq& C\Big(  \Tr \big( (H_0+\II)^{1/2}\varrho (H_0+\II)^{1/2} \big) \Big)^{1-\eps} \left(\sum_{p \in \Nm} (\lambda_p^{1-2\gamma})^{-(1-\eps)/\eps}\right)^{\eps},
\eee
where we used Lemma \ref{lieb2} with $\varrho \equiv (H_0+\II)^{\gamma}\varrho (H_0+\II)^{\gamma}$ and $H\equiv (H_0+\II)^{1-2\gamma}$ in the last line. The sum above is finite whenever $\eps<2(1-2\gamma)/(3-4\gamma)$ since $\lambda_p=(2 \pi p)^2+1$. Besides, we have the inequality
$$
\|(H_0+\II)^{-\gamma(1-\eps)}|V|(H_0+\II)^{-\gamma(1-\eps)}\|_{\calL(L^2)} \leq C \|V\|_{L^2}, \qquad \gamma (1-\eps)>1/4,
$$
since $H^{s}(\Omega) \subset L^\infty(\Omega)$ when $s>1/2$. Setting for instance $\gamma=3/8$, and $\eps \in (0,1/3)$, and using the Young inequality and \fref{rem}, this allows us to control $\varrho^{1-\eps} |V|$ by
$$
\Tr \big(\varrho^{1-\eps} |V| \big) \leq C+C\Tr \big( \sqrt{H_0} \varrho \sqrt{H_0} \big)+C\Tr\big( \varrho \big)+C\|V\|^2_{L^2}.
$$
Together with \fref{t41}, we finally find for $T_2$,
\be \label{eT2}
|T_2| \leq C+ C\|V\|^2_{L^2}+C\Tr \big( \sqrt{H_0} \varrho \sqrt{H_0} \big)+ C \Tr \big( \varrho \big).
\ee
The term $T_3$ is estimated in the same fashion as $T_2$. Collecting \fref{eT12}, \fref{eT4} and \fref{eT2} finally yields the desired result. This ends the proof of the lemma.

{\footnotesize \bibliographystyle{siam}
  \bibliography{bibliography} }

 \end{document}